\newcommand{\Ls}[1]{L_{\fing}(#1)}
\newcommand{\Ms}[1]{M_{\fing}(#1)}
\newcommand{\HC}{\mc{HC}}
\newcommand{\Vs}[1]{\Irr{#1\Lam_0}}
\newcommand{\D}{\mc{D}}
\newcommand{\Nil}{\mc{N}}
\newcommand{\BRS}[2]{H^{#1}_f(#2)}
\newcommand{\bw}[1]{\bigwedge\nolimits^{#1}}
\newcommand{\wh}{\widehat}
\newcommand{\mc}{\mathcal}
\newcommand{\mf}{\mathfrak}
\newcommand{\on}{\operatorname}
\newcommand{\KL}{\mathsf{KL}}
\newcommand{\Vg}[1]{V^{#1}(\fing)}
\newcommand{\finb}{\mathfrak{b}}
\newcommand{\finn}{\mathfrak{n}}
\newcommand{\isomap}{{\;\stackrel{_\sim}{\to}\;}}
\newcommand{\W}{\mathscr{W}}
\newcommand{\V}{V}
\newcommand{\eW}{\widetilde{W}}
\newcommand{\affh}{\widehat{\mathfrak{h}}}
\newcommand{\affg}{\widehat{\mathfrak{g}}}
\newcommand{\fing}{\mathfrak{g}}
\newcommand{\finh}{\mathfrak{h}}
\newcommand{\finm}{\mathfrak{m}}
\newcommand{\Cl}{\mathscr{C}l}
\newcommand{\Wg}[1]{\W^{#1}(\fing, f)}
\newcommand{\Lamsemi}[1]{\bigwedge\nolimits^{\frac{\infty}{2}+#1}}
\newcommand{\Irr}[1]{L(#1)}
\newcommand{\IrrW}[1]{\mathbf{L}_{\W}(#1)}
\newcommand{\VermaW}[1]{\mathbf{M}_{\W}(#1)}
\newcommand{\U}{\mathcal{U}}
\newcommand{\BGG}{{\mathcal O}}
\newcommand{\N}{\mathbb{N}}
\newcommand{\Q}{\mathbb{Q}}
\renewcommand{\1}{{\mathbf{1}}}
\newcommand{\dual}[1]{{#1}^*}
\newcommand{\bra}{{\langle}}
\newcommand{\ket}{{\rangle}}
\newcommand{\nno}{\nonumber}
\newcommand{\Lam}{\Lambda}
\newcommand{\lam}{\lambda}
\newcommand{\ra}{\rightarrow}
\newcommand{\+}{\mathop{\oplus}}
\newcommand{\Z}{\mathbb{Z}}
\newcommand{\Mod}{\text{-}\mathrm{Mod}}
\newcommand{\cprime}{$'$}
\newcommand{\inv}{^{-1}}
\renewcommand{\*}{{\otimes}}
\newcommand{\C}{\mathbb{C}}
\newcommand{\che}{^{\vee}}
\theoremstyle{plain}
\newtheorem{Th}{Theorem}[section]
\newtheorem*{MainTh}{Main Theorem}
\newtheorem{Pro}[Th]{Proposition}
\newtheorem{Lem}[Th]{Lemma}
\newtheorem{Co}[Th]{Corollary}
\theoremstyle{definition}
\theoremstyle{remark}
\newtheorem{Rem}[Th]{Remark}
\newtheorem{Conj}{Conjecture}
\newcommand{\affW}{\widehat{W}}
\newcommand{\Var}{\textit{Var}}
\newcommand{\Zhu}{A}
\DeclareMathOperator{\Ann}{Ann}
\DeclareMathOperator{\im}{Im}
\DeclareMathOperator{\rank}{rk}
\DeclareMathOperator{\ch}{ch}
\DeclareMathOperator{\Res}{Res}
\DeclareMathOperator{\id}{id}
\DeclareMathOperator{\End}{End}
\DeclareMathOperator{\gr}{gr}
\DeclareMathOperator{\Ext}{Ext}
\DeclareMathOperator{\old}{old}
\DeclareMathOperator{\ad}{ad}
\DeclareMathOperator{\Ad}{Ad}
\DeclareMathOperator{\haru}{span}
\DeclareMathOperator{\Spec}{Spec}
\DeclareMathOperator{\Specm}{Specm}
\title{Rationality of  W-algebras: principal nilpotent cases
}
\author{Tomoyuki Arakawa}
\address{Research Institute for Mathematical Sciences, Kyoto University,
 Kyoto 606-8502 JAPAN}
\email{arakawa@kurims.kyoto-u.ac.jp}
\thanks{This work is partially  supported 
by JSPS KAKENHI Grant Number
No.\ 20340007 and No.\ 23654006.}
\begin{document}
\maketitle

\begin{abstract}
We prove the rationality of all the  minimal series
principal $W$-algebras
discovered by Frenkel, Kac and Wakimoto \cite{FKW92},
 thereby giving
 a new family of rational 
and $C_2$-cofinite vertex operator algebras.
A key ingredient in our proof   is
the study of   Zhu's algebra
of simple $W$-algebras
via
the quantized Drinfeld-Sokolov reduction.
We show that
the functor of taking Zhu's algebra 
commutes with the
reduction functor.
Using this general fact
we determine the
 maximal spectrums of the associated graded of Zhu's algebras
of vertex operator algebras associated with admissible representations
 of
affine Kac-Moody algebras
as well.
\end{abstract}

\section{Introduction}
Let $\W^k(\fing)=\W^k(\fing,f_{prin})$ be
the $W$-algebra
associated with
 a complex finite-dimensional simple Lie algebra $\fing$ and a principal 
nilpotent element $f_{prin}$ of $\fing$ 
at level $k$
\cite{FatLyk88,LukFat89,FF90}.
In \cite{Ara07}
we have confirmed 
the conjecture
of 
Frenkel, Kac and Wakimoto  \cite{FKW92}
on
the existence of modular invariant representations
of $\W^k(\fing)$ for an  appropriate level $k$.
These representations
are
called the  {\emph {minimal series representations}}
of $\W^k(\fing)$
since in the case that 
$\fing=\mf{sl}_2(\C)$
they
are precisely the 
minimal series representations \cite{BPZ84}
of the Virasoro algebra.
It has been  expected 
\cite{FKW92}
and  widely believed 
that
these representations of $\W^k(\fing)$
form
a minimal model of the
corresponding conformal field theory
in the sense of \cite{BPZ84}
as in the case that $\fing=\mf{sl}_2(\C)$.
In the language of  vertex operator algebras
this amounts to showing that  the
 vertex operator algebras
associated with
minimal series  representations of $W$-algebras
are {\em rational} and {\em $C_2$-cofinite}.
We have 
established 
the $C_2$-cofiniteness property
 previously  in \cite{Ara09b}.
The main purpose  of this paper is to resolve
the 
remaining rationality problem.

Denote
by
 $\W_k(\fing)$ the unique simple quotient 
of $\W^k(\fing)$ at a non-critical level $k$.
The vertex operator algebra $\W_k(\fing)$ is isomorphic to a  minimal series 
representation as a module over $\W^k(\fing)$ 
if and only if
\begin{align}
 k&+h_{\fing}\che=p/q\in \Q_{>0},\ p,q\in \N,\ (p,q)=1,\label{eq:form-of-k}\\
&\text{and }\begin{cases}
p\geq h_{\fing}\che,\ q\geq h_{\fing}  &\text{if }
(q,r\che)=1,\\
p\geq h_{\fing},\ q\geq r\che h_{{}^L\fing}\che
&\text{if }(q,r\che)=r\che,\nonumber
 \end{cases}
\end{align}
where
$h_{\fing}$ is
the Coxeter number of $\fing$,
$h_{\fing}\che$ is
  the dual Coxeter number of $\fing$,
${}^L\fing$ is the Langlands dual Lie algebra of $\fing$,
and $r\che$ is the maximal number of the edges in the Dynkin diagram
of $\fing$.
The central charge $c(k)$ of $\W_k(\fing)$ 
is given by the formula
\begin{align*}
c(p/q -h\che_{\fing})=l-12
\frac{|q\rho-p\rho\che|^2}{pq}
=- \frac{l((h_{\fing}+1)p-h_{\fing}\che q)
(r\che h_{{}^L\fing}\che p-(h_{\fing}+1)q)}{pq},
\end{align*}
where
$l$ is the rank of $\fing$,
$\rho$  is the half sum of positive roots of $\fing$
and
$\rho\che$
is the half sum of positive coroots of $\fing$.
 \begin{MainTh}
Let $k$ be as in (\ref{eq:form-of-k}).
The vertex operator algebra
$\W_k(\fing)$ is rational (and $C_2$-cofinite 
\cite{Ara09b}).
The set of isomorphism classes of
minimal series representations 
of $\W^k(\fing)$ 
forms  the complete set of 
the isomorphism classes of simple  modules
over $\W_k(\fing)$.
 \end{MainTh}

Main Theorem has been proved  in \cite{BeiFeiMaz,Wan93}
in the case that
$\fing=\mf{sl}_2(\C)$  and
in \cite{DonLamTan04} in the case that $\fing=\mf{sl_3}(\C)$
and $k=5/4-3$ (or\footnote{There is the Feigin-Frenkel  duality
$\W_{p/q-h_{\fing}\che}(\fing)\cong \W_{q/r\che p-h_{{}^L\fing}\che}
({}^L\fing)$
for all $p,q\in \C^*$. (The details will be explained  elsewhere.)} $4/5-3$).


\smallskip
Let us explain the outline of the proof 
of Main Theorem briefly.
A crucial step
in the proof
is the classification of
the simple
modules over
the simple quotient $\W_k(\fing)$.
For this purpose
it is sufficient \cite{Zhu96} to
determine 
Zhu's algebra 
of $\W_k(\fing)$.
We
carry out this
by studying 
Zhu's algebra of $W$-algebras
 via the quantized
Drinfeld-Sokolov reduction.
Since
this is a
 general argument we work in a 
 more  general setting:
Let $f$ be any nilpotent element of $\fing$,
$\W^k(\fing,f)$ the
(universal) $W$-algebra  associated with
$(\fing,f)$ at  level $k$.
By definition \cite{FF90,KacRoaWak03}
we have
\begin{align*}
 \W^k(\fing,f)=H^0_f(\Vg{k}),
\end{align*}
where
$\Vg{k}$ is the universal affine vertex algebra
associated with $\fing$ at level $k$ and
$H_f^{\bullet}(M)$ denotes the BRST cohomology
of
the generalized quantized Drinfeld-Sokolov reduction \cite{KacRoaWak03}
associated with
$(\fing,f)$ with coefficient in a $\Vg{k}$-module $M$.
We show
that
\begin{align}
 \Zhu(H^0_f(L))\cong H^0_f(\Zhu(L))
\label{eq:iso-of-Zhu-intro}
\end{align}
for any quotient  $L$ of $\Vg{k}$
at any level $k$ (in fact  we prove a stronger
assertion,
see Theorem \ref{Th:Zhu-algebra-correspondence}).
Here, for a conformal  vertex algebra $V$, $\Zhu(V)$ denotes 
Zhu's algebra\footnote{More precisely, $\Zhu(V)$ 
is the $L_0$-twisted Zhu's algebra in the sense
of \cite{De-Kac06} 
since $\Wg{k}$
 is $\frac{1}{2}\Z_{\geq 0}$-graded in general. 
It is  the usual Zhu's algebra for $f=f_{prin}$.} of $V$,
and 
$H^0_f(\Zhu(L))$ denotes the (finite-dimensional analogue of) BRST
cohomology
associated with $(\fing,f)$
with coefficient in $\Zhu(L)$,
which is identical to $\Zhu(L)_{\dagger}$
in Losev's notation \cite{Los11}, 
see  Section \ref{section:finite W-algebras}.

In the case that
$f=f_{prin}$ the  classification
problem is relatively simple since
  $\Zhu(\W^k(\fing,f_{prin}))\cong \mc{Z}(\fing)$ (\cite{Ara07}),
where $\mc{Z}(\fing)$ is the center of the universal enveloping algebra
$U(\fing)$ of $\fing$,
and hence,
$\Zhu(\W_k(\fing))$ is a quotient of
the commutative algebra $\mc{Z}(\fing)$.
Moreover under the assumption of Main Theorem we have shown in
\cite{Ara07}
that
\begin{align*}
\W_k(\fing)\cong H^0_{f_{prin}}(\Vs{k})
\end{align*}
as conjectured in \cite{FKW92},
where
 $\Vs{k}$  is  the unique simple quotient 
vertex algebra of $\Vg{k}$
which is an admissible representation \cite{KacWak89}
 as a $\affg$-module.
It follows 
from (\ref{eq:iso-of-Zhu-intro}) 
that
Zhu's algebra $A(\W_k(\fing))$ of 
$\W_k(\fing)$
is 
completely
determined by
$A(\Vs{k})$.
We 
  deduce
the classification result in Main Theorem 
from 
that of  admissible affine vertex algebras $L(k\Lam_0)$
recently
 obtained by the author 
in
\cite{A12-2}.

Once
the classification of simple modules
is established  it is straightforward to see that
there is no extension between two distinct 
simple $\W_k(\fing)$-modules 
from the general result on the representation theory
of $\W^k(\fing)$ achieved  in \cite{Ara07}.
Finally the fact that
simple $\W_k(\fing)$-modules do not 
admit non-trivial self-extensions
follows from the result of
 Gorelik and Kac \cite{GorKac0905}
who established  the complete reducibility of admissible representations
of $\affg$.

The isomorphism
(\ref{eq:iso-of-Zhu-intro})
has an application to affine vertex algebras 
as well:
It
enables  us to
determine  the variety
$\Var \Zhu(\Vs{k})$
associated with  Zhu's algebra
of any
 admissible affine vertex algebra $\Vs{k}$
(Theorem \ref{Th:vareity-of-admissible-affine}).
This result was
 announced in \cite{A12-2}.

\smallskip

The assertion of Main Theorem is a special case
of the  conjecture 
of Kac and Wakimoto \cite{KacWak08}
on the rationality of 
{\em exceptional  $W$-algebras}.
In subsequent papers we prove the rationality
of 
 a large family of $W$-algebras,
including all the exceptional $W$-algebras of type $A$,
generalizing the result of \cite{AraBP}.

\smallskip

This paper is organized as follows.
In Section \ref{sectopn:classicalBRST}
and Section \ref{section:finite W-algebras}
we reformulate some results of Ginzburg \cite{Gin08}
and Losev \cite{Los11}
in terms of BRST reduction for later purposes.
In Section \ref{section:Zhu-algberas}
we 
fix some notations
for vertex algebras
and
clarify the relationship
between Frenkel-Zhu's bimodules and
Zhu's  $C_2$-modules associated with vertex algebras.
In Section \ref{section:change-of-cinformal} we discuss 
the effect of shifts of conformal vector to Frenkel-Zhu's bimodules,
which is needed to describe  Frenkel-Zhu's bimodules associated with
$W$-algebras.
In Section \ref{section:affineVA} we collect some basic facts 
about affine vertex algebras
and study Zhu's $C_2$-modules and Frenkel-Zhu's bimodules
associated with 
objects in the
the Kazhdan-Lusztig parabolic full subcategory $\KL_k$ of $\BGG$
of $\affg$.
In Section \ref{section:W-algebras}
we recall the definition of $W$-algebras
and some results from \cite{Ara09b}.
In Section \ref{section:Zhu'ss-bimodules-for-W}
 we show that
the functor of taking Frenkel-Zhu's bimodules
commutes with the
reduction functor on the category $\KL_k$. This result in particular
proves  (\ref{eq:iso-of-Zhu-intro}).
In Section \ref{section:varieties}
we recall the main result of \cite{A12-2}
and 
 determine
varieties 
$\Var \Zhu(\Vs{k})$
associated with  Zhu's algebras
of 
 admissible affine vertex algebras.
Finally we prove Main Theorem in Section \ref{section:proof}.

\smallskip

\subsection*{Acknowledgments}
The author wishes to thank
 Maria  Gorelik for
valuable discussions,
in particular, for giving him  a proof of Lemma 
\ref{Lem:extension}.
Some part of this work was done
while he was visiting
 Weizmann Institute, Israel, in May 2011,
Emmy Noether Center in Erlangen,
Germany
 in June 2011, 
Isaac Newton Institute for Mathematical Sciences,
UK, in 2011,
The University of Manchester,
University of Birmingham,
The University of Edinburgh,
 Lancaster University,
York University, UK,  in November 2011,
Academia Sinica, Taiwan, in December 2011,
Chern Institute of Mathematics,
Shanghai Jiao Tong University,
 China, in August 2012.
He is grateful to those institutes
for their hospitality.
Finally, he thanks
 the referees for the careful reading and useful comments.
\smallskip

\noindent {\em Notation}
Throughout this paper
the ground field is the complex number $\C$
and tensor products are meant to be as vector spaces
over $\C$ if not otherwise stated.

\section{The Slodowy slice and classical BRST reduction}
\label{sectopn:classicalBRST}
Let $R$ be a Poisson algebra.
Recall that a {\em Poisson module}
$M$ over 
$R$ is
a $R$-module  $M$ in the usual associative sense equipped with
a bilinear map
\begin{align*}
R\times M\ra M,\quad (r,m)\mapsto \ad r(m)=\{r,m\},
\end{align*}
which makes $M$ a Lie algebra module over $R$
satisfying 
\begin{align*}
 \{r_1,r_2 m\}=\{r_1,r_2\}m+r_2\{r_1,m\},\quad
\{r_1 r_2,m\}=r_1\{r_2,m\}+r_2\{r_1,m\}
\end{align*}
for $r_1,r_2\in R$, $m\in M$.
Let $R\on{-PMod}$ be the category of 
 Poisson modules
over $R$.

For any finite-dimensional Lie algebra
$\mf{a}$
the space
$\C[\mf{a}^*]=S(\mf{a})$ 
is a Poisson algebra
by the Kirillov-Kostant Poisson bracket.
A Poisson module over $\C[\mf{a}^*]$ is the same as 
a $\C[\mf{a}^*]$-module $M$ in the usual associative sense equipped with
a Lie algebra module 
structure $\mf{a}\ra \End M$,
$x\mapsto \ad(x)$, over $\mf{a}$
 such that
$\ad(x)(fm)=\{x,f\}m+ f\ad(x)(m)$
for $x\in \mf{a}$,
$f\in \C[\mf{a}^*]$,
$m\in M$.

Let $\fing$ be a finite-dimensional simple Lie algebra 
as in Introduction,
$(~|~)$ the normalized invariant inner product of $\fing$,
that is,
$1/2h\che_{\fing} \times $ the killing form of $\fing$.
Let
$\nu:\fing\isomap \fing^*$
be the isomorphism  
defined by
the   form  $(~|~)$.

Let
$f$ be a nilpotent element of $\fing$,
 $\{e,f,h\}$ 
an
$\mf{sl}_2$-triple
associated with $f$:
\begin{align*}
 [h,e]=2e,\quad 
[e,f]=h ,\quad [h,f]=-2f.
\end{align*}
Set
\begin{align*}
\chi=\nu(f)\in \fing^*.
\end{align*}
The affine space
\begin{align*}
 \mc{S}_f=\nu(f+\fing^e)\subset \fing^*
\end{align*}
is called the \emph{Slodowy slice} at $\chi$ to $\Ad G.\chi$,
where
$\fing^e$ is the centralizer of $e$ in $\fing$
and  $G$ is the adjoint group of $\fing$.
It is known \cite{GanGin02}
that
the Kirillov-Kostant 
Poisson structure of $\fing^*$ restricts to $\mc{S}_f$.
Hence $\C[\mc{S}_f]$ is a Poisson algebra.

We have
\begin{align}
 \fing=\bigoplus_{j\in \frac{1}{2}\Z}\fing_j,
\quad
\fing_j=\{x\in \fing|\ad h(x)=2j x\}.
\label{eq:grading}
\end{align}
Put
\begin{align*}
\fing_{\geq 1}=\bigoplus_{j\geq 1}\fing_j\subset 
\fing_{>0}=\bigoplus_{j>0}\fing_j=\fing_{1/2}\+ \fing_{\geq 1}.
\end{align*}
Denote by $G_{>0}$ the unipotent subgroup of $G$
whose Lie algebra is $\fing_{>0}$.
By \cite[Lemma 2.1]{GanGin02}
the coadjoint action
gives the isomorphism
\begin{align}
 G_{>0}\times \mc{S}_f\isomap \chi+\fing_{\geq 1}^{\bot}
\label{eq:iso-gan-ginzburg}
\end{align}
of affine varieties,
where
$\fing_{\geq 1}^\bot$ is the annihilator of $\fing_{\geq 1}$
in $\fing^*$.

Consider the affine subspace
$\chi+\nu(\fing_{-1/2})$ of
$\fing_{>0}^*$.
We have
\begin{align*}
\C[\chi+\nu(\fing_{-1/2})]=\C[\fing_{>0}^*]/\bar I_{>0,\chi},
\end{align*}
where
$\bar I_{>0,\chi}$ is the Poisson  ideal of $\C[\fing_{>0}^*]$
generated by 
$
x-\chi(x)$
with $x\in \fing_{\geq 1}$.
The Poisson bracket   of
the quotient algebra 
 is given by
\begin{align*}
\{x,y\}=\chi([x,y])\quad \text{for }x,y\in \fing_{1/2}
\end{align*}
under the identification $\C[\chi+\nu(\fing_{-1/2})]\cong  \C[\fing_{1/2}^*]=S(\fing_{1/2})$.
As 
\begin{align}
\fing_{1/2}\times \fing_{1/2}\ra \C,
\quad (x,y)\mapsto \chi([x,y]),
\label{eq:symplectic}
\end{align}is a symplectic form,
it follows that
$\chi+\nu(\fing_{-1/2})$ is 
isomorphic to $T^* \C^{\dim \fing_{1/2}/2}$
as Poisson varieties. 

Let
\begin{align*}
 \mu:\fing^*\ra \fing_{\geq 1}^*
\end{align*}
be the restriction map.
Then $\mu$ is the moment map for the
action of the unipotent subgroup $G_{\geq 1}$
of $G$ whose Lie algebra if $\fing_{\geq 0}$.
We have
\begin{align}
\mu\inv(\chi+\nu(\fing_{-1/2}))=\chi+\fing_{\geq 1}^\bot.
\end{align}

Let $\{x_i| i=1,\dots, \dim \fing_{>0}
\}$ be a homogeneous basis of
$\fing_{>0}$ with respect to the grading (\ref{eq:grading}) such that
the first $\dim \fing_{1/2}$-elements $\{x_i|i=1,\dots, \dim
\fing_{1/2}\}$ forms a basis of $\fing_{1/2}$,
and let
$\{c_{ij}^k\}$ be the structure constant:
$[x_i,x_j]=\sum_{k}c_{ij}^k x_k$.
For $i=1,\dots, \dim \fing_{>0}$ let
$\bar \phi_i$ denote the image of 
$x_i$ under the natural  Poisson algebra homomorphism
$\C[\fing_{>0}^*]\twoheadrightarrow  \C[\chi+\fing_{1/2}^*]$.
By definition
\begin{align*}
\{\bar \phi_i,\bar \phi_j\}=\chi([x_i,x_j])\quad
\text{for } i=1,\dots,\dim \fing_{1/2},
\end{align*}
and $\bar \phi_i=\chi(x_i)$ for $i>\dim \fing_{1/2}$.

Let  $\Pi \fing_{>0}^*$  
denote the  space
$\fing_{>0}^*$ considered as a purely odd vector space,
$T^* \Pi\fing_{>0}^*$ the tangent bundle  of
$\Pi \fing_{>0}^*$
  which is a symplectic supermanifold.
Then
$\C[T^* \Pi \fing_{>0}^*]
$ is a 
Poisson superalgebra,
which is nothing but the exterior algebra
$\bw{\bullet}(\fing_{>0}^*\+\fing_{>0})
=\bw{\bullet}(\fing_{>0}^*)\*
\bw{\bullet}(\fing_{>0})$ (with an obvious Poisson superbracket).

For a Poisson module $M$ over $\C[\fing^*]$
set
\begin{align*}
 &\bar C(M)=M\* \C[\chi+\nu(\fing_{-\1/2})]\* 
\C[T^* \Pi \fing_{>0}^*]=\bigoplus_{p\in \Z} \bar C^{p}(M),\\
&\quad  \bar C^{p}(M)=
\bigoplus_{i-j=p}M\* \C[\chi+\nu(\fing_{-\1/2})]\*
\bw{i}(\fing_{>0}^*)\*
\bw{j}(\fing_{>0}).
\end{align*}
Then
$\bar C(\C[\fing^*])$ is naturally a graded
Poisson superalgebra,
and
$\bar C (M)$ is  a Poisson module over 
$\bar C(\C[\fing^*])$ (in an obvious ``super'' sense).
Set
\begin{align*}
 \bar d=\sum_{i=1}^{\dim \fing_{>0}}(x_i\* 1+ 1\* \bar \phi_i )
\* x_i^*
-1\* 1\* \frac{1}{2}\sum_{1\leq i,j,k\leq \dim\fing_{>0}}c_{ij}^k x_i^* x_j^* x_k\in 
\bar C^1(\C[\fing^*]),
\end{align*}
where $\{x_i^*\}\subset \fing_{>0}^*\subset \C[T^* \Pi \fing_{>0}^*]$ is the dual basis of $\{x_i\}$.

\begin{Lem}\label{Lem:square-of-bar-d-is-0}
$\bar d^2=0$.
\end{Lem}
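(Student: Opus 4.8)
The plan is to verify $\bar d^2 = 0$ by a direct computation, organizing the terms by the standard BRST/Chevalley-Eilenberg bookkeeping, and exploiting the fact that the odd variables square to zero. Write $\bar d = \bar d_{CE} + \bar d_{\chi}$, where $\bar d_{CE}$ denotes the piece $\sum_i (x_i \otimes 1)\otimes x_i^* - 1\otimes 1\otimes \tfrac12\sum_{i,j,k} c_{ij}^k x_i^* x_j^* x_k$ (the Chevalley-Eilenberg differential for $\fing_{>0}$ acting on $\C[\fing^*]$) and $\bar d_{\chi} = \sum_i (1\otimes \bar\phi_i)\otimes x_i^*$ is the ``potential'' term coming from the Slodowy-slice factor $\C[\chi+\nu(\fing_{-1/2})]$. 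Then $\bar d^2 = \bar d_{CE}^2 + (\bar d_{CE}\bar d_\chi + \bar d_\chi \bar d_{CE}) + \bar d_\chi^2$, and I would check each of the three grouped terms vanishes.

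First, $\bar d_{CE}^2 = 0$ is the classical statement that the Chevalley-Eilenberg differential of the Lie algebra $\fing_{>0}$, acting on the module $\C[\fing^*]$ (where $\fing_{>0}$ acts by the Poisson/coadjoint action $x_i\cdot - = \{x_i, -\}$), squares to zero; this is just the Jacobi identity for $\fing_{>0}$ together with the module axiom, encoded in the usual way via the structure constants $c_{ij}^k$ and the anticommuting $x_i^*$. Second, $\bar d_\chi^2 = 0$: here $\bar d_\chi = \sum_i (1\otimes\bar\phi_i)\otimes x_i^*$, so $\bar d_\chi^2 = \sum_{i,j}\pm (1\otimes \bar\phi_i\bar\phi_j)\otimes x_i^* x_j^*$; the $\bar\phi_i\bar\phi_j$ is symmetric in $i,j$ (commutative product in $\C[\chi+\nu(\fing_{-1/2})]$) while $x_i^* x_j^*$ is antisymmetric, so the sum vanishes — with the caveat that one must track the sign picked up when $x_i^*$ anticommutes past $\bar\phi_j$ (the $\bar\phi_i$ live in an even factor, so this is harmless). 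Third, the cross term: $\bar d_{CE}\bar d_\chi + \bar d_\chi \bar d_{CE}$ should vanish because applying $\bar d_{CE}$ to the coefficients $\bar\phi_i$ reproduces exactly (minus) the cubic $c_{ij}^k$-term acting on the $\bar\phi$'s, i.e. the relation $\{x_i,\bar\phi_j\} - \{x_j,\bar\phi_i\} = \sum_k c_{ij}^k \bar\phi_k$, which holds because $\bar\phi_\bullet$ is the image of $x_\bullet$ under a Poisson algebra homomorphism and hence $\{x_i,\bar\phi_j\} = \overline{\{x_i,x_j\}} = \sum_k c_{ij}^k\bar\phi_k$. Matching this against the cubic term in $\bar d_{CE}$ (which contributes $-\tfrac12 c_{ij}^k x_i^* x_j^*$ paired with $x_k$ hitting $\bar\phi$) gives the cancellation.

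The main obstacle is purely sign bookkeeping: keeping track of the Koszul signs when the odd generators $x_i^*$ (and the even-but-module-valued $\bar\phi_i$, $x_i$) are moved past one another, and making sure the ``annihilation'' operators $x_k$ on $\bw{\bullet}(\fing_{>0})$ are contracted with the right sign against the ``creation'' operators $x_i^*$. A clean way to handle this is to recognize $\bar d$ as the image, under the classical limit / symbol map, of the BRST differential of the quantized Drinfeld-Sokolov reduction (or equivalently, the differential appearing in Kostant-Sternberg / BRST reduction for the moment map $\mu$), for which $d^2=0$ is known; but since the statement here is elementary I would simply perform the three-part check above, using the standard convention that $x_i^*$ acts by left multiplication (wedging) on $\bw{\bullet}(\fing_{>0}^*)$ and $x_k$ acts by contraction on $\bw{\bullet}(\fing_{>0})$. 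The identity $[x_i^*\wedge, \iota_{x_j}] = \delta_{ij}$ in the appropriate graded sense is what drives the cross-term cancellation once the Poisson-homomorphism property of $x_i \mapsto \bar\phi_i$ is invoked.
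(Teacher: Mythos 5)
Your plan would be fine in spirit, but the three-part split you chose does not go through, and the root of the trouble is that you at times compute the associative square rather than the Poisson superbracket $\{\bar d,\bar d\}$, which is the actual content of the lemma (it is what gives $(\ad \bar d)^2=0$; the associative square of any odd element of a supercommutative superalgebra vanishes for trivial reasons and carries no information). Concretely, the bracket is
$\{\bar d_\chi,\bar d_\chi\}=\sum_{i,j}\{\bar\phi_i,\bar\phi_j\}\,x_i^*x_j^*=\sum_{i,j,k}c_{ij}^k\,\bar\phi_k\,x_i^*x_j^*$,
where the super-Leibniz rule \emph{differentiates} the $\bar\phi$'s rather than producing the product $\bar\phi_i\bar\phi_j$, and where $\{\bar\phi_i,\bar\phi_j\}=\sum_k c_{ij}^k\bar\phi_k$ because $x\mapsto\bar\phi$ is a Poisson homomorphism; this is nonzero in general. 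Likewise the cross term does not vanish: the contraction $\{x_k,x_l^*\}=\delta_{kl}$ between the cubic ghost piece of $\bar d_{CE}$ and $\bar d_\chi$ gives
$\{\bar d_{CE},\bar d_\chi\}=-\tfrac{1}{2}\sum_{i,j,k}c_{ij}^k\bar\phi_k\,x_i^*x_j^*$,
while the bracket $\{x_i,\bar\phi_j\}$ you invoke is actually zero in $\bar C$, since $x_i$ (in the $\C[\fing^*]$ factor) and $\bar\phi_j$ (in the $\C[\chi+\nu(\fing_{-1/2})]$ factor) live in different tensor slots. What is true is that the two nonvanishing pieces cancel one another, $\{\bar d_\chi,\bar d_\chi\}+2\{\bar d_{CE},\bar d_\chi\}=0$, precisely by the homomorphism identity for the $\bar\phi_i$; but that is a cross-cancellation, not the ``each piece dies separately'' statement your write-up asserts.

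The direct computation is more cleanly organized by \emph{not} splitting off $\bar d_\chi$: set $z_i:=x_i\otimes 1+1\otimes\bar\phi_i$. Since both $\C[\fing^*]$ and $\C[\chi+\nu(\fing_{-1/2})]$ receive $\fing_{>0}$ through Poisson maps, one has $\{z_i,z_j\}=\sum_k c_{ij}^k z_k$, so $\bar d=\sum_i z_i\otimes x_i^*-\tfrac{1}{2}\sum_{i,j,k}c_{ij}^k x_i^*x_j^*x_k$ is exactly the standard Chevalley--Eilenberg (classical BRST) element for the Lie algebra $\fing_{>0}$ acting on the commutative factor $\C[\fing^*]\otimes\C[\chi+\nu(\fing_{-1/2})]$, and $\{\bar d,\bar d\}=0$ reduces to the bracket relation for the $z_i$ together with the Jacobi identity for the $c_{ij}^k$, both of which hold. (Alternatively, as you yourself note, one may take the classical limit of the quantum BRST differential and import its nilpotency; that route is also valid and sidesteps the bookkeeping entirely.)
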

Since
$\bar d$ is an odd element
it follows from Lemma \ref{Lem:square-of-bar-d-is-0}
that
$ (\ad \bar d)^2=0$ on any Poisson module over $\bar C(\C[\fing^*])$.
It  follows that
$(\bar C(\C[\fing^*]), \ad \bar d)$ 
is a differential graded superalgebra
and 
$(\bar C(M),\ad \bar d)$
is a module over 
the differential graded algebra
$(\bar C(\C[\fing^*]), \ad \bar d)$.
Let  $H^{\bullet}_f(M)$ be the
cohomology of the cochain complex
$(\bar C(M),\ad \bar d)$.
The space
$H^{\bullet}_f(\bar \C[\fing^*])$
 inherits the
$\Z$-graded Poisson superalgebra structure from
$\bar C(\C[\fing^*])$
and $\BRS{0}{M}$ is naturally a module over $\BRS{0}{\C[\fing^*]}$.
\begin{Th}[\cite{KosSte87,De-Kac06}, see also Theorem \ref{Th:vanishing-Poisson} below]
 We have $H^{i}_f(\C[\fing^*])=0$
for $i\ne 0$
and
$H^0_f(\C[\fing^*])\cong 
 \C[\mc{S}_f]$ as Poisson algebras.
\end{Th}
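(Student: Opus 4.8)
The plan is to recognize $(\bar C(\C[\fing^*]),\ad\bar d)$ as the classical BRST complex for the Whittaker-type Hamiltonian reduction of $\fing^*$ by the coadjoint action of $G_{>0}$, and to compute it through a spectral sequence whose only substantial input is the Gan--Ginzburg isomorphism (\ref{eq:iso-gan-ginzburg}).

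First I would introduce a filtration. Using the $\mf{sl}_2$-grading (\ref{eq:grading}), assign the weight $1-j$ to a generator $x\in\fing_j$ of $\C[\fing^*]$, the weight $\tfrac12$ to the coordinate $\bar\phi_i$ of $\C[\chi+\nu(\fing_{-1/2})]$ attached to $x_i\in\fing_{1/2}$, the weight $1-a$ to the antighost dual to $x_i\in\fing_a$, and the weight $a+1$ to the ghost $x_i^*$ with $x_i\in\fing_a$; extend multiplicatively. A generator-by-generator check (using that $\ad\bar d$ is a derivation of the supercommutative product) shows $\ad\bar d=d_{\mathrm K}+d'$, where $d'$ strictly raises the weight and $d_{\mathrm K}$ preserves it and is the Koszul differential attached to the ``constraints'' $\theta_k:=x_k+\bar\phi_k\in\C[\fing^*]\otimes\C[\chi+\nu(\fing_{-1/2})]$ (with $\bar\phi_k=\chi(x_k)$ a scalar when $x_k\in\fing_{\geq1}$); the crucial point that $\theta_k$ is weight-homogeneous for $x_k\in\fing_1$ rests on the fact that $\fing_1$-generators of $\C[\fing^*]$ here have weight $0$, matching the scalar $\chi(x_k)$. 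In the associated graded of the decreasing filtration by weight, the $E_0$-differential is therefore $d_{\mathrm K}$.

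Next I would run the spectral sequence. The $\theta_k$ are affine-linear and linearly independent -- for $x_k\in\fing_{\geq1}$ they are the linear forms cutting out $\chi+\fing_{\geq1}^{\bot}\subset\fing^*$, while those with $x_k\in\fing_{1/2}$ in addition set $\bar\phi_k=-x_k$ and so eliminate the $\C[\chi+\nu(\fing_{-1/2})]$-variables -- hence a regular sequence, so the Koszul complex is a resolution and $E_1\cong\C[\chi+\fing_{\geq1}^{\bot}]\otimes\bw{\bullet}(\fing_{>0}^*)$, concentrated in Koszul degree $0$ (the antighosts have been consumed, the ghosts ride along as spectators). The induced differential $d_1$ is then the Chevalley--Eilenberg differential of $\fing_{>0}$ acting on $\C[\chi+\fing_{\geq1}^{\bot}]$ by the coadjoint action -- well defined because $\chi+\fing_{\geq1}^{\bot}$ is $G_{>0}$-stable -- so $E_2=H^{\bullet}(\fing_{>0},\C[\chi+\fing_{\geq1}^{\bot}])$. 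Here (\ref{eq:iso-gan-ginzburg}) enters: it presents $\chi+\fing_{\geq1}^{\bot}$, with its coadjoint $G_{>0}$-action, as $G_{>0}\times\mc{S}_f$ with $G_{>0}$ acting by translation on the first factor. Consequently $\C[\chi+\fing_{\geq1}^{\bot}]\cong\C[G_{>0}]\otimes\C[\mc{S}_f]$ as $\fing_{>0}$-modules, $\fing_{>0}$ acting only on $\C[G_{>0}]$, and since $G_{>0}$ is unipotent over $\C$ one has $H^{\bullet}(\fing_{>0},\C[G_{>0}])=\C$ concentrated in degree $0$ (reduce to $G_{>0}=\mathbb{G}_a^{\dim\fing_{>0}}$ by iterated $\mathbb{G}_a$-extensions and the algebraic Poincar\'e lemma, or use that the coregular representation is injective). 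Hence $E_2=\C[\mc{S}_f]$ is concentrated in cohomological degree $0$; the spectral sequence therefore degenerates at $E_2$ (which also takes care of convergence), giving $H^i_f(\C[\fing^*])=0$ for $i\ne0$ and $H^0_f(\C[\fing^*])\cong\C[\mc{S}_f]$.

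It remains to identify the Poisson structures. The degree-$0$ cohomology of a differential Poisson superalgebra with odd differential $\ad\bar d$ is again a Poisson algebra: $\ad\bar d$ is a Poisson derivation preserving the ghost grading, so the $0$-cocycles form a Poisson subalgebra and the $0$-coboundaries a Poisson ideal therein. Under the isomorphism just established, this induced bracket is precisely the one on $\mc{S}_f$ obtained by Dirac reduction of the Kirillov--Kostant bracket, as in \cite{GanGin02}; verifying this match is a routine unwinding of the constructions. The real obstacle is the bookkeeping in the first two steps -- checking that $\gr(\ad\bar d)$ is exactly the Koszul differential of the $\theta_k$ with the indicated weights, and that these constraints form a regular sequence -- together with having (\ref{eq:iso-gan-ginzburg}) available; once these are in hand, the remaining homological algebra (Koszul exactness, the cohomology of the coregular representation of a unipotent group, and the degeneration of the spectral sequence) is formal.
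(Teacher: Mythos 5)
Your proof follows essentially the same route as the paper's proof of Theorem \ref{Th:vanishing-Poisson}, which subsumes the stated result as the case $M=\C[\fing^*]$: one splits $\ad\bar d$ into a Koszul-type contraction and a Chevalley--Eilenberg-type part, shows the Koszul constraints form a regular sequence so that the first page is $\C[\chi+\fing_{\geq1}^{\bot}]\otimes\bw{\bullet}(\fing_{>0}^*)$, identifies the next differential with the Lie algebra differential of $\fing_{>0}$, and kills everything in positive degree by the Gan--Ginzburg isomorphism (\ref{eq:iso-gan-ginzburg}). So the body of the argument agrees with the paper, and your regularity, Koszul, and $H^\bullet(\fing_{>0},\C[G_{>0}]\otimes\C[\mc{S}_f])$ steps are all correct.

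The genuine gap is the parenthetical claim that degeneration at $E_2$ ``also takes care of convergence.'' The weight you assign (which, as you can check, is the classical Kazhdan weight on the $\C[\fing^*]$, $\bar\phi_i$, and antighost factors, plus $1$ per ghost $x_i^*$) is unbounded on $\C[\fing^*]$ in both directions, since $x\in\fing_j$ has weight $1-j$ and $j$ ranges over $[-N,N]$. An exhaustive, Hausdorff, but unbounded and incomplete filtration of a complex does not automatically yield a convergent spectral sequence, and degeneration at a finite page does not by itself fix this. The paper avoids the problem by filtering by the (exterior) antighost degree $-j$, so that $(\bar C^{i,j})$ is a bicomplex supported in $0\le i\le\dim\fing_{>0}$, $-\dim\fing_{>0}\le j\le 0$; boundedness then gives convergence for free, while producing the same $E_1$ and $E_2$ pages as your filtration. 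Alternatively, your filtration can be salvaged by noting that $\ad\bar d$ preserves the classical Kazhdan grading itself, so the complex splits as a direct sum over Kazhdan degree $d$, and on each summand your weight equals $d$ plus the ghost degree $i\in[0,\dim\fing_{>0}]$, hence is bounded; but either way the convergence argument needs to be made rather than asserted.
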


Let $\overline{\mc{HC}}$ be the
full subcategory of 
the category of 
$\C[\fing^*]\on{-PMod}$
consisting of modules on which
the Lie algebra action of $\fing$ is locally finite.
Denote by  $\bar I_{\chi}$   the ideal of $\C[\fing^*]$
generated by $y-\chi(y)$ with $y\in \fing_{\geq 1}$.
Then, for $M\in \overline{\HC}$,
$\bar I_{\chi}M$ is a Poisson submodule of
$M$ over $\C[\fing_{>0}^*]$.

The following assertion is a reformulation of a result 
of \cite{Gin08}.
\begin{Th}\label{Th:vanishing-Poisson}
For $M\in \overline{\mc{HC}}$,
we have
\begin{align*}
 H^i_f(M)\cong \begin{cases}
		(M/\bar I_{\chi} M)^{\ad \fing_{>0}}
&
\text{for }i=0,
\\0&\text{otherwise}
	       \end{cases}
\end{align*} 
In particular
the functor
\begin{align*}
\overline{\mc{HC}}\ra \C[\mc{S}_f]\on{-PMod},\quad
M\mapsto H_f^0(M),
\end{align*}
is 
exact,
and 
\begin{align*}
\on{supp}_{\C[\mc{S}_f]}\BRS{0}{M}=\mc{S}_f\cap \on{supp}_{\C[\fing^*]}(M)
\end{align*}
for a finitely generated object $M$ of $\overline{\HC}$.
\end{Th}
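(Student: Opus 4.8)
The plan is to reduce the computation of $H^\bullet_f(M)$ to a tractable two-step spectral sequence, exploiting the semidirect structure $\fing_{>0}=\fing_{1/2}\oplus\fing_{\geq 1}$ and the fact that the odd variables split accordingly. First I would filter the Chevalley--Eilenberg-type complex $(\bar C(M),\ad\bar d)$ by the eigenvalues of $\ad h$ (or equivalently by the $\fing_{\geq 1}$-degree of the ghosts), so that the associated graded differential keeps only the ``abelian'' part of $\bar d$ coming from $\fing_{\geq 1}$, namely $\sum_{x_i\in\fing_{\geq 1}}(x_i\otimes 1)\otimes x_i^*$ acting on $M\otimes\bw{\bullet}(\fing_{\geq 1})$, together with the Koszul-type contribution of the $\fing_{1/2}$-ghosts paired with $\bar\phi_i=\chi(x_i)$. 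The point of introducing the extra factor $\C[\chi+\nu(\fing_{-1/2})]\cong S(\fing_{1/2})$ is precisely that, for the $\fing_{1/2}$-directions, the symplectic form (\ref{eq:symplectic}) makes the relevant piece a Koszul complex for a regular sequence with acyclic higher cohomology and $H^0$ given by setting the $\fing_{1/2}$-moment-map conditions to their character value; this is the standard ``quadratic ghost'' trick of Kostant--Sternberg that already appears in the proof of the preceding theorem about $\C[\fing^*]$ itself.

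Next I would compute the first page of the spectral sequence. For the $\fing_{\geq 1}$-part, since $M$ lies in $\overline{\HC}$ the action of $\fing$ — hence of the nilpotent $\fing_{\geq 1}$ — is locally finite, so $M$ is a union of finite-dimensional $\fing_{\geq 1}$-modules and the Chevalley--Eilenberg cohomology $H^\bullet(\fing_{\geq 1}, M\otimes \text{(character twist)})$ can be read off: the key input is that for a finite-dimensional module over a nilpotent Lie algebra, twisting by a generic character $\chi|_{\fing_{\geq 1}}$ kills all higher Lie algebra cohomology and leaves $H^0=(M/\bar I_\chi M)$. Concretely this is where I would invoke (the Poisson/commutative reformulation of) Ginzburg's result: one shows $H^i(\fing_{\geq 1}, \,\cdot\,)$ with the $\chi$-twisted differential vanishes for $i>0$ and equals the $\chi$-coinvariants for $i=0$. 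Combining the two halves, the first page collapses to $M/\bar I_\chi M$ placed in the appropriate degree, carrying a residual $\fing_{0}$-action and a residual $\fing_{>0}$-differential; the remaining differential on that page is exactly $\ad(\fing_{>0})$, i.e. the Chevalley--Eilenberg differential for the reductive-by-nilpotent quotient acting on $M/\bar I_\chi M$, and because the $\fing_{1/2}$-Koszul part has already been used up, one is left computing $(M/\bar I_\chi M)^{\ad\fing_{>0}}$ in degree $0$ with everything else vanishing.

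To finish, I would check convergence and degeneration: the filtration is bounded below on each $\ad h$-eigenspace (the grading (\ref{eq:grading}) is by $\tfrac12\Z$ and $M\in\overline{\HC}$ is a locally finite, hence weight-bounded-below in the relevant sense), so the spectral sequence converges, and the two computations above force $E_2=E_\infty$ concentrated in cohomological degree $0$, giving $H^i_f(M)=0$ for $i\neq 0$ and $H^0_f(M)\cong (M/\bar I_\chi M)^{\ad\fing_{>0}}$. Exactness of $M\mapsto H^0_f(M)$ on $\overline{\HC}$ is then immediate: $M\mapsto M/\bar I_\chi M$ is right exact always, and the vanishing $H^1_f=0$ upgrades this to exactness by the long exact sequence; alternatively $M\mapsto (M/\bar I_\chi M)^{\ad\fing_{>0}}$ is exact because taking $\ad\fing_{>0}$-invariants of a $\chi$-twisted $\fing_{>0}$-module is exact for the same vanishing reason. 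For the support statement, note $M/\bar I_\chi M$ is the (scheme-theoretic) restriction of $M$ along the inclusion $\mu\inv(\chi+\nu(\fing_{-1/2}))\hookrightarrow \fing^*$, and taking $\fing_{>0}$-invariants corresponds, via the isomorphism (\ref{eq:iso-gan-ginzburg}) identifying $\chi+\fing^\bot_{\geq 1}$ with $G_{>0}\times\mc S_f$, to pushing forward to $\mc S_f$; since $\mc S_f$ is a slice transverse to the $G_{>0}$-orbits this is a faithfully flat descent on the relevant open, so for finitely generated $M$ one gets $\on{supp}_{\C[\mc S_f]}H^0_f(M)=\mc S_f\cap\on{supp}_{\C[\fing^*]}(M)$.

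The main obstacle I anticipate is the vanishing of the higher $\fing_{\geq 1}$-cohomology with the $\chi$-twist for a general (possibly infinite-dimensional but locally finite) module $M$: one must argue uniformly over the finite-dimensional pieces and ensure the spectral-sequence bookkeeping does not hide a nonzero higher differential coming from the mixed $\fing_{1/2}$--$\fing_{\geq 1}$ brackets $c_{ij}^k$ with $x_i,x_j\in\fing_{1/2}$ and $x_k\in\fing_1$. This is exactly the place where the choice of the homogeneous basis (with the $\fing_{1/2}$-part listed first) and the quadratic ghost term $-\tfrac12\sum c_{ij}^k x_i^*x_j^*x_k$ in $\bar d$ are used; handling it is the technical heart of Ginzburg's argument that we are reformulating, and I would cite \cite{Gin08} for the module-theoretic input while supplying the BRST-complex translation here.
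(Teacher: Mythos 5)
Your broad spectral-sequence strategy is in the right spirit (compute a Koszul-type piece first, then a Chevalley--Eilenberg piece), but the argument for the vanishing at the second page has a genuine gap, and the filtration you propose is not the one the paper actually uses.

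On the filtration: you propose filtering by $\ad h$-eigenvalue ``or equivalently by the $\fing_{\geq 1}$-degree of the ghosts.'' These two gradings are not equivalent (the $\fing_{1/2}$-ghosts carry $\ad h$-weight $\pm 1$, the $\fing_{\geq 1}$-ghosts weight $\pm 2, \pm 3,\dots$, and $M$ itself is graded), nor is $\bar d$ homogeneous for $\ad h$ (the $\chi$-terms shift weight). The paper instead bigrades $\bar C$ by the degrees in $\bw{\bullet}(\fing_{>0}^*)$ and $\bw{\bullet}(\fing_{>0})$ so that $\ad \bar d = \bar d_- + \bar d_+$, with $\bar d_-$ Koszul (multiplication by $x_i$ and $\bar\phi_i$, contracted against $x_i^*$) and $\bar d_+$ the Chevalley--Eilenberg differential; this splits ``Koszul'' from ``Lie algebra cohomology,'' not $\fing_{1/2}$ from $\fing_{\geq 1}$.

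The more serious issue is the assertion that ``twisting by a generic character $\chi|_{\fing_{\geq 1}}$ kills all higher Lie algebra cohomology'' for a finite-dimensional (or locally finite) module over a nilpotent Lie algebra. That is false as a general statement: take $\mf{n}$ abelian, $\chi$ any character, $M=\C_\chi$; then $H^\bullet(\mf{n}, M\otimes\C_{-\chi}) = H^\bullet(\mf{n},\C) = \bw{\bullet}\mf{n}^*$, nonzero in every degree up to $\dim\mf{n}$. It is also not what the proof uses. The Ginzburg input \cite[Cor.\ 1.3.8]{Gin08} is a Koszul-regularity statement for the sequence $x_i-\chi(x_i)$, which computes the $E_1$-page and yields $(M/\bar I_\chi M)\otimes\bw{\bullet}(\fing_{>0}^*)$; the $E_2$-page then computes ordinary (untwisted) Lie algebra cohomology $H^\bullet(\fing_{>0}, M/\bar I_\chi M)$, and its concentration in degree $0$ is geometric, not a twist-generic phenomenon. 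Concretely, the Gan--Ginzburg isomorphism \eqref{eq:iso-gan-ginzburg} identifies $\chi+\fing_{\geq 1}^\bot$ with $G_{>0}\times\mc{S}_f$, so $\C[\chi+\fing_{\geq 1}^\bot]\cong\C[G_{>0}]\otimes\C[\mc{S}_f]$ with $\fing_{>0}$ acting only on the first factor, and then the argument of \cite[6.2]{GanGin02} shows for any $M\in\overline{\HC}$ that the multiplication map $\C[\chi+\fing_{\geq 1}^\bot]\otimes_{\C[\mc{S}_f]}(M/\bar I_\chi M)^{\ad\fing_{>0}}\to M/\bar I_\chi M$ is a $\fing_{>0}$-equivariant isomorphism. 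This realizes $M/\bar I_\chi M$ as a direct sum of copies of $\C[G_{>0}]$, whose Lie algebra cohomology vanishes in positive degree, giving the collapse $E_2=E_\infty$. You invoke \eqref{eq:iso-gan-ginzburg} only at the very end for the support statement, but it is needed at exactly the earlier point where you claim ``everything else vanishing'' — without it the degeneration does not follow.
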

\begin{proof}
Since a cohomology functor commutes with injective limits we may assume
 that
$M$ is finitely generated.
Set
$\bar C=\bar C(M)$,
$\bar C^p=\bar C^p(M)$,
$\bar C^{ij}=M\*
\C[\chi+\fing_{\1/2}^*]\*
\bw{i}(\fing_{>0}^*)\*
\bw{-j}(\fing_{>0})\subset \bar C$,
so that
$\bar C^p=\bigoplus\limits_{i\geq 0,\ j\leq 0
\atop i+j=p}\bar C^{i,j}$.
 The differential
$\ad \bar d: \bar C^p\ra \bar C^{p+1}$ decomposes
as 
\begin{align*}
 \ad \bar d=\bar d_-\+\bar d_+,
\end{align*}
where
\begin{align}
 \bar d_-=&
 \sum_{i}(x_i\* \id+ \id \* \bar \phi_i ) \* \ad x_i^*,
\label{eq:bar-d-]}\\
\bar d_+=&
\sum_{i}(\ad x_i \* \id +\id \* \ad  \bar \phi_i )\* x_i^*
+\sum_{i,j,k}\id \* \id \* c_{ij}^k x_k  x_j^* \ad x_i^*\\
&-\id \* \id \* \frac{1}{2}\sum_{i,j,k}c_{ij}^k x_i^* x_j^* \ad x_k.
\nonumber
\label{eq:bar-d+}
\end{align}
Since
$\bar d_- \bar C^{i,j}\subset \bar C^{i,j+1}$,
$\bar d_+\bar C^{i,j}\subset \bar C^{i+1,j}$,
it follows that
\begin{align*}
 \{\bar d_-,\bar d_+\}=0,\quad \bar d_-^2=\bar d_+^2=0.
\end{align*}

Consider the spectral sequence
$E_r\Rightarrow H^{\bullet}_f(M)$
with
\begin{align*}
 E_1^{p,q}=H^{q}(\bar C^{p,\bullet},\bar d_-),
\quad E_2^{p,q}=H^p(E_1^{\bullet,q}
,\bar d_+).
\end{align*}
By (\ref{eq:bar-d-]}),
$H^{\bullet}(\bar C^{p,\bullet},\bar d_-)$
is the homology of the
Koszul complex 
of the $\C[\fing_{>0}^*]$-module 
$M\* \C[\chi+\nu(\fing_{-1/2})]\* \bw{p}(\fing_{>0}^*)
$ associated with the
sequence
$x_1,x_2,\dots, x_{\dim {\fing_{>0}}}$,
where
 $\C[\fing_{>0}^*]$ acts only on the first two factors.
Since
$\C[\chi+\nu(\fing_{-1/2})]$ is a free $ \C[\fing_{1/2}^*]$-module of rank $1$
it follows that
$H^{\bullet}(\bar C^{p,\bullet},\bar d_-)$
is isomorphic to the homology of the
Koszul complex 
of the  $\C[\fing_{\geq 1}^*]$-module 
$M\*\bw{p}(\fing_{>0}^*)$ associated with the
sequence
$x_{\dim \fing_{1/2}+1
}-\chi(x_{\dim \fing_{1/2}+1}),\dots, x_{\dim \fing_{>1}}
-\chi(x_{\dim \fing_{> 1}})$.
Hence thanks to  \cite[Corollary 1.3.8]{Gin08}
we have 
\begin{align}
 E_1^{\bullet,q}\cong \begin{cases}
		       (M/\bar I_{\chi}M)\* \bw{\bullet}(\fing_{>0}^*)&
\text{for }q=0,\\0&\text{for }q\ne 0.
		      \end{cases}
\end{align}
Hence from (\ref{eq:bar-d+})
we see that
$E_2^{\bullet,0}$ is isomorphic to the Lie algebra cohomology
$H^{\bullet}(\fing_{>0},M/\bar I_{\chi}M
 )$.

Now first consider the case 
 that $M=\C[\fing^*]$.
Since $
\C[\fing^*]/\bar I_{\chi}\cong \C[\chi+\fing_{\geq 1}^{\bot}]$
we have
$\C[\chi+\fing_{\geq 1}^{\bot}]=\C[G_{>0}]\*_{\C}\C[\mc{S}_f]$
by (\ref{eq:iso-gan-ginzburg}),
and thus,
\begin{align}
H^i(\fing_{>0},\C[\chi+\fing_{\geq 1}^{\bot}])\cong
\begin{cases}
 \C[\mc{S}_f]&\text{for }i=0,\\
0&\text{for }i>0.
\end{cases}
\label{eq:vanishing-slodowy-slices}
\end{align}
For a general module $M$
 the argument of  \cite[6.2]{GanGin02}
shows that
the multiplication map
\begin{align*}
\varphi:\C[\chi+\fing_{\geq 1}^{\bot}]
\*_{\C[\mc{S}_f]}(M/\bar I_{\chi}M
 )^{\ad \fing_{>0}}
\ra M/\bar I_{\chi}M
\end{align*}
is an isomorphism
of $\fing_{>0}$-module,
where
$\fing_{>0}$ acts only on the first factor $\C[\chi+\fing_{\geq 1}^{\bot}]$
of $\C[\chi+\fing_{\geq 1}^{\bot}]
\*_{\C[\mc{S}_f]}(M/\bar I_{\chi}M
 )^{\ad \fing_{>0}}$
and $\C[\mc{S}_f]$ acts on $(M/\bar I_{\chi}M)^{\ad \fing_{>0}}$ by the identification 
$\C[\mc{S}_f]
=(\C[\fing^*]/\bar I_{
\chi}\C[\fing^*])^{\ad \fing_{>0}}$.
Therefore
 (\ref{eq:vanishing-slodowy-slices}) gives that
\begin{align*}
 E_2^{p,q}\cong \begin{cases}
		(M/\bar I_{\chi}M)^{\ad \fing_{>0}}&\text{for }p=q=0,\\
0&\text{otherwise}.
		\end{cases}
\end{align*}
We conclude that
the spectral sequence collapses at $E_2=E_{\infty}$,
and the assertion follows.
\end{proof}

\section{Finite $W$-algebras
and equivalences of categories
via
BRST reduction}
\label{section:finite W-algebras}

Let $A$ be
  an associative algebra  over $\C$
equipped with an increasing $\frac{1}{2}\Z$-filtration $F_{\bullet}A$
such that
\begin{align}
F_p A\cdot  F_qA\subset F_{p+q}A,\quad 
[F_p A,F_qA]\subset F_{p+q-1}A.
\label{eq:compatible-filtration}
\end{align}Then the associated graded space
$\gr_F A=\bigoplus_{p\in \frac{1}{2}\Z}F_pA/F_{p-1/2}A$
is naturally a Poisson algebra.
We assume that
$\gr_F A$ is finitely generated as a ring.

Denote by   $A\on{-biMod}$ 
  the 
category
of $A$-bimodules.
Let $M$ be an object 
of  $A\on{-biMod}$ 
equipped with an increasing
filtration $F_{\bullet}M$ compatible with the one on
$A$,
that is,
\begin{align*}
 F_p A\cdot  F_q M\cdot  F_r A\subset F_{p+q+r}M,\quad [F_pA, F_q M]\subset F_{p+q-1}M.
\end{align*}
Then  $\gr_F M=\bigoplus_{p}F_pM/F_{p-1/2}M$
is naturally a Poisson module over $\gr_F A$.
The filtration $F_{\bullet}M$ is called {\em good}
if $\gr_F M$ is finitely generated over $\gr_F A$ in a usual associative
sense.
If this is the case we set
\begin{align*}
\Var M=\on{supp}(\gr_F M)\subset \Spec(\gr_F A),
\end{align*}
equipped with the reduced scheme structure.
It is well-known that
$\Var M$ is independent of the choice of a good filtration.

Let $F_{\bullet}U(\fing)$ be the standard PBW filtration of
$U(\fing)$:
\begin{align*}
F_{-1}U(\fing)=0,\quad
F_0 U(\fing)=\C,\quad 
F_p U(\fing)=\fing F_{p-1}U(\fing)+F_{p-1}U(\fing).
\end{align*}
Set
$F_p U(\fing)[j]=\{u\in U_{p}(\fing)|\ad h (u)=2 j u\}$,
where,
recall, $h$ is defined in 
Section \ref{sectopn:classicalBRST}.
Let
\begin{align*}
K_pU(\fing)=\sum\limits_{i-j\leq p}F_iU(\fing)[j].
\end{align*}
Then $K_{\bullet}U(\fing)$
is an
increasing,
exhaustive, 
separated filtration 
of $U(\fing)$
that satisfies \eqref{eq:compatible-filtration}.
The filtration 
$\{K_pU(\fing)\}$ is called the {\em Kazhdan filtration}.
The  associated graded
Poisson algebra
$\gr_K U(\fing)$
is naturally isomorphic to
$\C[\fing^*]$.

Let $M$ be a  $U(\fing)$-bimodule.
A Kazhdan filtration 
of $M$
is an increasing,
exhaustive, separated,
filtration
$K_{\bullet}M$ which is compatible with the Kazhdan filtration of $U(\fing)$.

Define 
\begin{align*}
I_{>0,\chi}=\sum_{x\in \fing_{\geq 1}}U(\fing_{>0})(x-\chi(x)).
\end{align*}
Then $I_{>0,\chi}$ is a two-sided ideal
of $U(\fing_{>0})$.
Set
\begin{align*}
 \D=U(\fing_{>0})/I_{>0,\chi},
\end{align*}
and 
let
\begin{align*}
\phi: U(\fing_{>0})\twoheadrightarrow  \D
\end{align*}
be the natural surjective algebra homomorphism,
$\phi_i=\phi(
x_i)$,
where $\{x_i\}$ is defined in section \ref{sectopn:classicalBRST}.
Then
\begin{align*}
[\phi_i,\phi_j]=\chi([x_i,x_j])\quad 
\text{for } i=1,\dots,\dim \fing_{1/2},
\end{align*}
and $\bar \phi_i=\chi(x_i)$ for $i>\dim \fing_{1/2}$.
It follows that
$\D$
is isomorphic
to 
the 
Weyl algebra
of rank $\dim \fing_{1/2}/2$.
Let $K_{\bullet}\D$ be the filtration of $\D$ induced by
$K_{\bullet}U(\fing)$,
that is,
$K_p\D
$
 the image of
$ K_pU(\fing)\cap U(\fing_{>0})$
 in 
$\D$.
 The  associated graded Poisson algebra
$\gr_K \D
$ is isomorphic to 
$\C[\chi+\nu(\fing_{-1/2})]$
which appeared in Section \ref{sectopn:classicalBRST}.

Denote by
$\Cl $ the
Clifford algebra
associated with
$\fing_{>0}\+\fing_{>0}^*$
 and the bilinear form
$\fing_{>0}\+\fing_{>0}^*\times \fing_{>0}\+\fing_{>0}^*\ra \C$,
$(x+f,x'+f')\mapsto f(x')+f'(x)$.
The algebra
$\Cl $ contains $\bw{\bullet}(\fing_{>0}^*)$
and $\bw{\bullet}(\fing_{>0})$ as its subalgebras
and the multiplication map
$\bw{\bullet}(\fing_{>0}^*)
\* \bw{\bullet}(\fing_{>0})\ra
\Cl $
is a linear isomorphism.
Let
$F_{\bullet}\Cl$ be the increasing
filtration 
of $\Cl$ defined by
$F_p\Cl=\bigoplus_{j\leq p} \bw{\bullet}(\fing^*_{>0})\*
\bw{j}(\fing_{>0})$.
Set
$F_p \Cl [j]=\{\omega\in F_p\Cl|\ad h(\omega)=2 j
\omega\}$,
and define  the filtration $K_{\bullet}\Cl$ by
\begin{align*}
K_p\Cl=\sum\limits_{i-j\leq p}F_i\Cl [j].
\end{align*}
We have $\gr_K\Cl\cong \C[T^* \Pi \fing_{>0}^*]$ as Poisson superalgebras.

Let
$\mathcal{HC}$ be the full subcategory of
 $U(\fing)\on{-biMod}$
consisting of modules on which
the adjoint $\fing$-action is locally finite.

For  $M\in \HC$,
let
\begin{align*}
& C(M)=M\* \D
\* \Cl =\bigoplus_{p\in \Z}C^p
 (M),\\
&\quad C^p(M)=\bigoplus_{i-j=p}M\* \D
\* \bw{i}(\fing_{>0}^*)\* \bw{j}(\fing_{>0}).
\end{align*}
Here we have used the linear isomorphism
$\Cl\cong \bw{\bullet}(\fing_{>0}^*)\* \bw{\bullet}(\fing_{>0})$.
The space
$C(M)$ is naturally a $\Z$-graded 
bimodule
over the $\Z$-graded superalgebra
 $C(U(\fing))$.

Set
\begin{align*}
 d=\sum_{i}(x_i\* 1+ 1\* \phi_i )
\* x_i^*
-1\* 1\* \frac{1}{2}\sum_{i,j,k}c_{ij}^k x_i^* x_j^* x_k\in 
C^1(U(\fing)).
\end{align*}

\begin{Lem}\label{Lem:square-of-d-is-0}
 $d^2=0$ in 
$C(U(\fing))$.
\end{Lem}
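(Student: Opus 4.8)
The plan is to mimic the proof of Lemma \ref{Lem:square-of-bar-d-is-0}, transporting it from the Poisson setting to the associative one. Write $d = d_0 - d_{\mathrm{cub}}$, where $d_0 = \sum_i (x_i\otimes 1 + 1\otimes \phi_i)\otimes \psi_i^*$ is the ``linear'' part (here I must first fix the notational slip in the statement: $\psi_i^*$ should denote left multiplication by $x_i^*$ in $\Cl$, matching $x_i^*$ in $C^1(U(\fing))$) and $d_{\mathrm{cub}} = \frac{1}{2}\sum_{i,j,k} c_{ij}^k\, 1\otimes 1\otimes x_i^* x_j^* x_k$ is the cubic Clifford term. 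Since $d$ is an odd element of the associative superalgebra $C(U(\fing))$, we have $d^2 = \frac12 [d,d]$ where $[\,,\,]$ is the supercommutator, so it suffices to show $[d,d]=0$, and this expands as $[d_0,d_0] - 2[d_0,d_{\mathrm{cub}}] + [d_{\mathrm{cub}},d_{\mathrm{cub}}]$ (signs to be tracked carefully).

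First I would compute $[d_0,d_0]$. Because the factors $x_i\otimes 1$ and $1\otimes\phi_i$ commute with each other and with all the $x_j^*$, the only contributions come from (a) the commutators $[x_i,x_j]$ in $U(\fing)$ and $[\phi_i,\phi_j]$ in $\D$, and (b) the anticommutators $\{x_i^*, x_j^*\} = 0$ in $\Cl$. The latter vanish, so $[d_0,d_0] = \sum_{i,j}\bigl([x_i,x_j]\otimes 1 + 1\otimes[\phi_i,\phi_j]\bigr)\otimes x_i^* x_j^*$; note $[\phi_i,\phi_j] = \phi([x_i,x_j])$ holds for \emph{all} $i,j$ (for $i,j>\dim\fing_{1/2}$ both sides are $\chi([x_i,x_j])$, and the mixed cases are handled by the ideal $I_{>0,\chi}$), so this equals $\sum_{i,j,k} c_{ij}^k (x_k\otimes 1 + 1\otimes\phi_k)\otimes x_i^* x_j^*$. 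Next, $[d_0, d_{\mathrm{cub}}]$: the $x_k$ in the cubic term (acting by left/right multiplication, i.e. by $\ad x_k$ in the bimodule sense on the $U(\fing)$ and $\D$ legs) produces $\sum_{i,j,k} c_{ij}^k ([x_k, x_\ell]\otimes 1 + \dots)$ terms paired against $x_i^* x_j^*$ and $x_\ell^*$, while the $\psi_\ell^* = $ mult.\ by $x_\ell^*$ hits the Clifford monomial $x_i^* x_j^* x_k$, contracting against $x_k$ via the Clifford form to give terms $\propto c_{ij}^\ell x_i^* x_j^*$. These are arranged so that $[d_0,d_{\mathrm{cub}}]$ reproduces exactly $\tfrac12[d_0,d_0]$ plus a purely cubic-in-$x^*$ remainder.

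The purely cubic remainder, together with $[d_{\mathrm{cub}},d_{\mathrm{cub}}]$, must cancel; this is precisely the statement that $\sum c_{ij}^k x_i^* x_j^* x_k^{\text{ad}}$-type expression squares to zero, which is the Chevalley-Eilenberg identity $d_{CE}^2 = 0$ for the Lie algebra cohomology differential of $\fing_{>0}$ acting on itself, and reduces to the Jacobi identity for $\{c_{ij}^k\}$ together with the antisymmetry $\{x_i^*,x_j^*\}=0$. Concretely, $[d_{\mathrm{cub}},d_{\mathrm{cub}}]$ is a sum over quadruples of structure constants times $x_a^* x_b^* x_c^*$ (the ``$x_k$ of one factor meets the $x_\ell^*$ of the other'') which vanishes by Jacobi after antisymmetrizing over the three odd generators. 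The main obstacle, and the only place real care is needed, is bookkeeping: tracking the Koszul signs coming from moving odd elements past one another, and verifying that the mixed term $[d_0, d_{\mathrm{cub}}]$ exactly halves $[d_0,d_0]$ rather than producing a spurious multiple — this is a finite but sign-sensitive computation. I expect no conceptual difficulty beyond what already appears in Lemma \ref{Lem:square-of-bar-d-is-0}; indeed one may alternatively deduce $d^2=0$ from $\bar d^2 = 0$ by a filtration argument, observing that $\gr_K d = \bar d$ under the Kazhdan filtration, so $\gr_K(d^2) = \bar d^2 = 0$, and then checking directly that $d^2$ has Kazhdan degree strictly below its expected top degree forces $d^2=0$ — but the honest direct check is cleaner to present.
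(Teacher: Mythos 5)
The paper gives no proof of this lemma; it is the standard ``BRST charge squares to zero'' assertion, left to the reader. Your overall plan --- split $d = d_0 - d_{\mathrm{cub}}$, compute the three (anti)commutators, invoke Jacobi --- is the right strategy, and your treatment of $[d_0,d_0]$ is correct, including the key observation $[\phi_i,\phi_j]=\phi([x_i,x_j])$ for all $i,j$ (so the $U(\fing)$- and $\D$-contributions aggregate to $\sum_{i,j,k}c_{ij}^k(x_k\otimes 1 + 1\otimes\phi_k)\otimes x_i^* x_j^*$). However, the step involving $[d_0,d_{\mathrm{cub}}]$ is wrong as you describe it, and the cancellation pattern you rely on does not exist.

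The element $d_{\mathrm{cub}} = \tfrac{1}{2}\sum_{i,j,k} c_{ij}^k\, 1\otimes 1\otimes x_i^* x_j^* x_k$ has the form $1\otimes 1\otimes Q$ with $Q\in\Cl$: the $x_k$ appearing there is a generator of the Clifford algebra coming from $\fing_{>0}\subset\fing_{>0}\oplus\fing_{>0}^*$, not an element of $U(\fing)$ or of $\D$. It does not ``act by $\ad x_k$ on the $U(\fing)$ and $\D$ legs''; that contribution simply is not present. Computing in $\Cl$ one finds $[d_0,d_{\mathrm{cub}}]=\sum_\ell (x_\ell\otimes 1 + 1\otimes\phi_\ell)\otimes\{x_\ell^*,Q\}$ with $\{x_\ell^*,Q\}=\tfrac12\sum_{i,j}c_{ij}^\ell x_i^* x_j^*$, which is \emph{quadratic}, not cubic, in the $x^*$'s. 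Hence $[d_0,d_{\mathrm{cub}}]=\tfrac12[d_0,d_0]$ on the nose, with no ``purely cubic-in-$x^*$ remainder,'' and $[d_{\mathrm{cub}},d_{\mathrm{cub}}]=2Q^2=0$ must be checked \emph{separately} (via the Jacobi identity for $\{c_{ij}^k\}$ together with $\{x_i^*,x_j^*\}=0$); it has nothing to cancel against. The corrected bookkeeping still gives $[d,d]=[d_0,d_0]-2\cdot\tfrac12[d_0,d_0]+0=0$, so the conclusion stands, but not by the mechanism you describe. Your alternative filtration argument is likewise incomplete: $\gr_K(d^2)=0$ only forces $d^2\in K_{3/2}C(U(\fing))$, and knowing the top symbol vanishes does not by itself give $d^2=0$.
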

Since
$d$ is an odd element
it follows from Lemma \ref{Lem:square-of-d-is-0}
that
$ (\ad d)^2=0$ on $C(M)$.
By abuse of notation
we denote
by $H_f^{\bullet}(M)$ the cohomology of the cochain complex
$(C(M), \ad d)$.
Since
$(C(U(\fing)), \ad d)$ is a 
differential graded algebra
$H^{\bullet}_f(U(\fing))$ is naturally a
$\Z$-graded superalgebra and 
$H_f^{\bullet}(M)$ is naturally
a bimodule over $H_f^{\bullet}(U(\fing))$.

The {\em finite $W$-algebra}
\cite{Pre02}
associated with $(\fing,f)$
may be defined as 
the associative algebra
\begin{align*}
 U(\fing,f):=H_f^0(U(\fing))
\end{align*}
(\cite{DAnDe-De-07}, see \eqref{eq:usal-finite-W} below).

Let 
$K_{\bullet}M$ be 
a Kazhdan filtration 
of 
$M\in \HC$.
Set
\begin{align*}
 K_p C(M)=\sum_{p_1+p_2+p_3\leq p}K_{p_1}M\* K_{p_2}D\* K_{p_3}\Cl.
\end{align*}
When this is applied to $M=U(\fing)$,
$K_{\bullet}C(U(\fing))$ 
 defines an increasing,
exhaustive, separated filtration of $C(U(\fing))$
satisfying \eqref{eq:compatible-filtration}.
Note that $d\in K_1 C(U(\fing))$,
and thus,
$\ad d \cdot K_p C(U(\fing))\subset K_p C(U(\fing))$
and $\ad d$ defines a derivation of 
$\gr_K C(U(\fing))$.
By definition the differential graded algebra
$(\gr_K C(U(\fing)), \ad d)$
 is isomorphic to
$(\bar C(\C[\fing^*])
, \ad \bar d )$ , and
$\gr_K C(U(\fing^*))$ is isomorphic to $\bar C(\gr_K M)$
as Poisson modules over $\bar C( \C[\fing^*])$,
where 
$\bar C(\gr_K M)$ is the complex 
considered in Section
\ref{sectopn:classicalBRST}.

Let
 $K_{\bullet} H^{\bullet}_f(M
)$ be
the 
filtration of
$H_f^{\bullet}(M)$ induced from the filtration $K_{\bullet}C(M)$.
We have
\begin{align*}
 \gr_K \BRS{0}{U(\fing)}\cong H^0_f(\gr_K U(\fing))\cong \C[\mc{S}_f]
\end{align*}
as Poisson algebra (\cite{GanGin02,De-Kac06}). 
In fact,
we have the following more general assertion.
\begin{Th}\label{Th:vanishing-finite-dimensional}
\begin{enumerate}
 \item Let $M$ be an finitely generated object of $\HC$,
$K_{\bullet}M$ a good
Kazhdan-filtration of $M$.
Then
\begin{align*}
\gr_K H^i_f(M)\cong
		    H^i_f(\gr_K M)\cong  \begin{cases}
(\gr_K M/\bar I_{\chi}\gr_K M)^{\ad \fing_{>0}}
&\text{for }i=0,\\
0&\text{otherwise}
		   \end{cases}
\end{align*}as Poisson modules over 
$\C[\mc{S}_f]$.
In particular
\begin{align*}
 \Var\BRS{0}{M}=\Var M\cap \mc{S}_f.
\end{align*}
\item
We have 
$H^i_f(M)=0$ for $i\ne 0$,
$M\in \mc{HC}$.
In particular
the functor
\begin{align}
\mc{HC}\ra U(\fing,f)\on{-biMod},
\quad M\mapsto H^0_f(M),
\label{eq:losev's functor}
\end{align}is exact.
\end{enumerate}\end{Th}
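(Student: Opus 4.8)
The plan is to deduce Theorem \ref{Th:vanishing-finite-dimensional} from its classical counterpart Theorem \ref{Th:vanishing-Poisson} by the standard spectral-sequence-of-a-filtered-complex argument, exploiting the identification of associated graded objects already established in the text. First I would fix a finitely generated $M\in\HC$ together with a good Kazhdan filtration $K_\bullet M$; by the discussion preceding the statement, $\gr_K C(M)\cong \bar C(\gr_K M)$ as Poisson modules over $\bar C(\C[\fing^*])$, and under this identification the induced differential on $\gr_K C(M)$ is exactly $\ad\bar d$. Thus the Kazhdan filtration on $C(M)$ induces a filtration on the complex $(C(M),\ad d)$ whose associated graded complex is the classical BRST complex $(\bar C(\gr_K M),\ad \bar d)$ of Section \ref{sectopn:classicalBRST}.

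Next I would run the spectral sequence associated with this filtered complex, $E_1^{\bullet}\cong H^\bullet_f(\gr_K M)$ converging to $\gr_K H^\bullet_f(M)$. Since $M$ is finitely generated and the filtration is good, $\gr_K M$ is a finitely generated object of $\overline{\HC}$, so Theorem \ref{Th:vanishing-Poisson} applies and gives $H^i_f(\gr_K M)=0$ for $i\ne 0$ and $H^0_f(\gr_K M)\cong (\gr_K M/\bar I_\chi\gr_K M)^{\ad\fing_{>0}}$. The concentration of $E_1$ in cohomological degree $0$ forces the spectral sequence to degenerate at $E_1$, whence $H^i_f(M)=0$ for $i\ne 0$ and $\gr_K H^0_f(M)\cong H^0_f(\gr_K M)$ as claimed; the isomorphism of Poisson modules over $\C[\mc{S}_f]=\gr_K U(\fing,f)$ is then automatic from the compatibility of the filtrations with the (super)bracket. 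The statement $\Var H^0_f(M)=\Var M\cap\mc{S}_f$ follows by combining $\gr_K H^0_f(M)\cong H^0_f(\gr_K M)$ with the support formula $\on{supp}_{\C[\mc{S}_f]}H^0_f(N)=\mc{S}_f\cap\on{supp}_{\C[\fing^*]}(N)$ of Theorem \ref{Th:vanishing-Poisson} applied to $N=\gr_K M$, once one checks that the induced filtration on $H^0_f(M)$ is good — which holds because $H^0_f(\gr_K M)$ is finitely generated over $\C[\mc{S}_f]$.

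For part (ii), the vanishing $H^i_f(M)=0$ for $i\ne 0$ and all $M\in\HC$ follows from part (i) for finitely generated $M$ after choosing any good Kazhdan filtration (which exists in $\HC$), and then for general $M\in\HC$ by writing $M$ as the direct limit of its finitely generated subobjects and using that cohomology commutes with direct limits. The exactness of the functor $M\mapsto H^0_f(M)$ is then the usual long-exact-sequence consequence of $H^i_f=0$ for $i\ne 0$: a short exact sequence in $\HC$ yields a long exact sequence in $H^\bullet_f$ which collapses to a short exact sequence in degree $0$.

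The main obstacle I anticipate is bookkeeping rather than conceptual: one must verify carefully that $\gr_K C(M)$, with the differential induced by $\ad d$, really coincides with $\bar C(\gr_K M)$ equipped with $\ad\bar d$ — i.e. that passing to $\gr_K$ kills precisely the lower-order terms of $d$ and leaves $\bar d$ — and that a good Kazhdan filtration on $M$ exists and induces a good filtration on $H^0_f(M)$. The first point is essentially the observation that $d\in K_1 C(U(\fing))$ and that the filtration degree of each summand of $d$ is computed by the Kazhdan grading; this is already asserted in the text preceding the theorem, so I would simply invoke it. The convergence of the spectral sequence is unproblematic since the Kazhdan filtration is exhaustive, separated, and bounded below on each Kazhdan-graded piece of $C(M)$, so in each internal degree the filtration is finite.
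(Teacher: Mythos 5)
Your proposal follows the same route as the paper: pass to the associated graded of the Kazhdan filtration on $C(M)$, identify it with the classical BRST complex $\bar C(\gr_K M)$, invoke Theorem \ref{Th:vanishing-Poisson} to kill all higher cohomology of the associated graded, and conclude by degeneration of the filtered-complex spectral sequence; part (ii) is handled identically by reducing the general case to the finitely generated case via direct limits. You spell out the spectral-sequence mechanism more explicitly than the paper, which simply says ``the assertion follows immediately from Theorem \ref{Th:vanishing-Poisson}.''

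The one place where you gloss over a genuine subtlety is the convergence of the spectral sequence. The Kazhdan filtration on $U(\fing)$, and hence on $C(M)$, is \emph{not} bounded below in general (for instance for $\fing$ with highest $\ad h$-weight $>2$ there are elements of arbitrarily negative Kazhdan degree), so convergence is not automatic from exhaustiveness and separatedness. Your sentence ``bounded below on each Kazhdan-graded piece of $C(M)$'' is not well-posed as written --- there is no Kazhdan \emph{grading}, only a filtration --- and presumably you intend to restrict to a fixed $\ad h$-eigenspace and cohomological degree, which is a reasonable idea (in a fixed $\ad h$-weight the Kazhdan filtration on $U(\fing)$ \emph{is} bounded below), but you never say this, and the claim that this forces convergence still requires a verification that the paper delegates to Ginzburg's Lemma~4.3.3 in \cite{Gin08} (convergence in the sense of \cite{CarEil56}). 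You should either cite that lemma, as the paper does, or supply the argument carefully, because this is precisely the technical heart of transferring Theorem \ref{Th:vanishing-Poisson} to the quantum setting.
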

\begin{proof}
(i)
By the assumption $\gr_K M$ is an object of
$\overline{\mc{HC}}$.
Moreover,
thanks to (the proof of) \cite[Lemma 4.3.3]{Gin08},
the filtration $K_{\bullet} C(M)$ is convergent in the sense of 
\cite{CarEil56}.
Hence the assertion follows immediately from Theorem \ref{Th:vanishing-Poisson}.
(ii)
Suppose  that  $M$ is finitely generated.
Then
$M$ admits a good Kazhdan filtration,
and hence,
$H^i_f(M)=0$ for $i\ne 0$.
But
this prove the
vanishing of all $M\in \mc{HC}$
since the cohomology functor commutes with injective limits.
\end{proof}

We shall now give yet another description of the 
functor \eqref{eq:losev's functor},
and show that \eqref{eq:losev's functor} is equivalent to the
functor constructed by Ginzburg \cite{Gin08}
and Losev \cite{Los11}, independently.

Choose a Lagrangian
subspace $l$ of $\fing_{1/2}$ with respect to the symplectic form
(\ref{eq:symplectic}),
and
let
\begin{align*}
 \finm=l\+\fing_{\geq 1}.
\end{align*}
Then $\finm$ is a nilpotent subalgebra of $\fing_{>0}$
and
the restriction of $\chi$ to $\finm$
is a character, that is,
$\chi([x,y])=0$ for $x,y\in \finm$.
Let $\{x'_i| i=1,\dots, \dim\finm\}$ be a basis of $\finm$,
$\{{x_i'}^*| i=1,\dots,\dim \finm\}$ the dual basis of $\finm^*$,
${c_{ij}^k}'$ 
the structure constants of $\finm$.

Let $\Cl_{\finm}$ 
Clifford algebra
associated with
$\finm\+\finm^*$
 and the natural bilinear form
on it.
For $M\in \mc{HC}$ set
\begin{align*}
& C(M)'=M\* \Cl_{\finm},\\
& d'=\sum_{i=1}^{\dim \finm}(x_i'+\chi(x_i'))
\* {x_i'}^*
-1\* 1\* \frac{1}{2}\sum_{1\leq i,j,k\leq \dim \finm}{c_{ij}^k}'
 {x_i'}^* {x_j'}^* x_k'\in 
C(U(\fing))'.
\end{align*}
Then
we have
$(d')^2=0$
and $(C(M'),\ad d')$ is  a cochain complex as well.
Denote by
 $H_f^{\bullet}(M)'$  the corresponding cohomology.

  \begin{Pro}\label{Pro:identification}
$ $
\begin{enumerate}
\item 
We have
an algebra isomorphism
$H_f^{0}(U(\fing))'
\cong U(\fing,f) $.
\item For $M\in \mc{HC}$ we have
$\BRS{i}{M}'=0$ for $i\ne 0$ and
$H_f^{0}(M)'\cong H^{0}_f(M) $ as modules over $U(\fing,f)$.
\end{enumerate}
 \end{Pro}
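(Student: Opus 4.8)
The plan is to compare the "big" complex $(C(M),\ad d)$ built from $\fing_{>0}$ with the "small" complex $(C(M)',\ad d')$ built from the Lagrangian subalgebra $\finm = l\oplus\fing_{\geq 1}$, and to show that the extra data in the big complex — the Weyl algebra factor $\D$ together with the Clifford generators for the complement of $\finm$ in $\fing_{>0}$, namely a complementary Lagrangian $l'$ of $\fing_{1/2}$ — forms an acyclic Koszul-type piece that contributes nothing to the cohomology. The standard mechanism (going back to the vertex-algebra computation of Kac–Roan–Wakimoto, and its finite analogue) is to introduce an auxiliary grading (a "charge" refining the cohomological degree) under which $\ad d$ splits, run the associated spectral sequence, and identify the first page with the small complex.

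First I would fix bases compatibly: write $\fing_{1/2} = l\oplus l'$ with $l,l'$ Lagrangian, so that $\D$ is the Weyl algebra on $l\oplus l'$ with $l$ "positions" and $l'$ "momenta" (this is exactly the identification $\D\cong$ Weyl algebra of rank $\dim\fing_{1/2}/2$ made just before Lemma~\ref{Lem:square-of-d-is-0}), and take the Clifford generators $x_i^*,x_i$ split according to $\fing_{>0} = l\oplus l'\oplus\fing_{\geq 1}$. Then I would introduce on $C(M)$ the grading by (number of $l'$-Clifford creation operators) $+$ (degree in the momentum part of $\D$), and observe that under this grading $\ad d = \delta_0 + \delta_1$ where $\delta_1$ is the piece that pairs each momentum generator of $\D$ coming from $l'$ against the corresponding Clifford generator $x_i^*$ (the terms $(1\otimes\phi_i)\otimes\psi_i^*$ for $x_i\in l'$), and $\delta_0$ is everything else. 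The key computation — and this is the step I expect to be the main obstacle, since it requires checking that the cross terms in $\ad d$ respect the filtration and that $\delta_1$ really is a Koszul differential with the right homology — is that $\delta_1$ is acyclic on the tensor factor $\D_{l'}\otimes \bigwedge(l'^*)$ (Weyl-algebra-times-exterior-algebra on $l'$), with cohomology $\C$ concentrated in degree $0$. This is a finite-rank version of the well-known acyclicity of the "$bc\beta\gamma$" reduction and can be done by an explicit homotopy, or by reducing to the rank-one case and tensoring.

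Once that acyclicity is in hand, the spectral sequence for $\delta_0 + \delta_1$ degenerates: the $E_1$-page is $M\otimes \D_{l}\otimes\bigwedge(l^*)\otimes\bigwedge(\fing_{\geq 1}^*)\otimes\bigwedge(\fing_{\geq 1})$ with differential induced by $\delta_0$, and one identifies $\D_l\otimes\bigwedge(l^*)$ (Weyl on $l$ with no momenta is just $S(l^*)$, i.e. $\bigwedge^0$-part contributes $\C[l^*]$... ) — more precisely, after killing the acyclic part the residual complex is canonically $(C(M)', \ad d')$ up to a harmless tensor factor $S(l^*)\otimes\bigwedge(l^*)$ which is itself a Koszul complex with cohomology $\C$. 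Thus the induced differential on $E_1$ is exactly $\ad d'$, giving $H^i_f(M)\cong H^i_f(M)'$ for all $i$, and in particular (ii). For (i) one then only has to check that this isomorphism in degree $0$, applied to $M = U(\fing)$, is an algebra isomorphism: this is automatic because the quasi-isomorphism of complexes can be arranged to be multiplicative (the inclusion $C(M)'\hookrightarrow C(M)$ induced by $\finm\hookrightarrow\fing_{>0}$ together with the augmentation $\D\to\C$, $\bigwedge l'^* \to \C$ is a map of differential graded algebras), so it induces an algebra map on $H^0$, which is the isomorphism just produced; combined with $U(\fing,f) = H^0_f(U(\fing))$ by definition, this gives $H^0_f(U(\fing))'\cong U(\fing,f)$, and the $H^0_f(U(\fing))$-module structure on $H^0_f(M)$ matches the $H^0_f(U(\fing))'$-module structure on $H^0_f(M)'$ under the same map. (The reference \eqref{eq:usal-finite-W} presumably records that $H^0_f(U(\fing))'$ is Premet's finite $W$-algebra $U(\fing,f)$ in its standard presentation, so one may also simply cite it and only verify compatibility with the big complex.)
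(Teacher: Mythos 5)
Your strategy---filter the big complex $(C(M),\ad d)$ by a ``charge'' isolating a Koszul-type acyclic piece and compare it with $(C(M)',\ad d')$ via the resulting spectral sequence---is the De Sole--Kac style argument (the finite analogue of \cite{KacRoaWak03}) and is a genuinely different route from the paper's. The paper instead passes to the associated graded with respect to a good Kazhdan filtration: the analogue of Theorem \ref{Th:vanishing-finite-dimensional} identifies $\gr_K H^i_f(M)'$ with $(\gr_K M/\finm_\chi\gr_K M)^{\ad\finm}$ concentrated in degree zero, Gan--Ginzburg \cite[5.5]{GanGin02} shows this coincides with $(\gr_K M/\bar I_\chi\gr_K M)^{\ad\fing_{>0}}=\gr_K H^0_f(M)$, and then an explicit map $H^0_f(M)\to H^0_f(M)'$ is constructed following \cite[3.2.5]{AKM}, which must be an isomorphism because it induces the graded one. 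The Kazhdan-filtration route delegates the spectral-sequence work to the already-established classical Poisson statement and, crucially, produces a concrete comparison map, which is exactly what you need to match the algebra and bimodule structures.

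Your argument for (i) and for the $U(\fing,f)$-module structure in (ii) has a concrete gap. You claim the comparison is a DG-algebra map, citing ``the inclusion $C(M)'\hookrightarrow C(M)$ $\ldots$ together with the augmentation $\D\to\C$, $\bigwedge l'^*\to\C$.'' But the Weyl algebra $\D$ is simple and infinite-dimensional, so there is no algebra homomorphism $\D\to\C$: no augmentation exists. Nor is the inclusion (via the units of $\D$ and $\bigwedge l'^*$) a chain map: for $x_i\in l$ the summand $(1\otimes\phi_i)\otimes\psi_i^*$ of $d$ pairs a generator $\phi_i$ of $\D$ with the Clifford generator $\psi_i^*$ dual to $x_i\in l\subset\finm$, so $\ad d$ applied to $m\otimes 1\otimes\omega$ with $\omega\in\Cl_{\finm}$ produces terms with a nonconstant $\D$-component, whereas $\ad d'$ produces none. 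Neither candidate map respects the differentials, so the claimed ``automatic'' multiplicativity fails. To repair this one needs either a different construction of the comparison map, as the paper does via \cite[3.2.5]{AKM}, or a careful use of the multiplicativity of the spectral sequence of filtered DG algebras together with a lifting argument from the $E_\infty$-page back to the algebras themselves; as written the step does not go through.
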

 \begin{proof}
We may assume that
  $M$ be a finitely generated as in the proof of Theorem \ref{Th:vanishing-finite-dimensional}.
Let $K_{\bullet}M$ be a good Kazhdan filtration.
In the same manner as Theorem \ref{Th:vanishing-finite-dimensional}
one can show that 
\begin{align*}
 \gr_K \BRS{i}{M}'\cong 
\begin{cases}
 (\gr_K M/\finm_{\chi}\gr_KM)^{\ad \finm}&\text{for }i=0,\\
0&\text{for }i\ne 0,
\end{cases}
\end{align*}
where $\finm_{\chi}$ is the ideal generated by $x-\chi(x)$ with
$x\in \finm$.
Since
the natural map
$(\gr_K M/\bar I_{\chi}\gr_K M)^{\ad \fing_{>0}}\ra
(\gr_K M/\finm_{\chi}\gr_K M)^{\ad \finm}$ 
is an isomorphism by the argument of
\cite[5.5]{GanGin02}
we have
\begin{align}
\gr_K \BRS{0}{M}\isomap \gr_K \BRS{0}{M}'
\label{eq:gr-iso}
\end{align}
as modules over $\C[\mc{S}_f]$.

Now in the same manner  as  in
\cite[3.2.5]{AKM}
one can construct a map
$\BRS{0}{M}\ra \BRS{0}{M}'$,
which induces the map \eqref{eq:gr-iso},
and hence must be an isomorphism.
For $M=U(\fing)$ 
this gives an algebra isomorphism 
$\BRS{0}{U(\fing)}\isomap \BRS{0}{U(\fing)}'$ 
and for a general $M $ this gives the assertion (ii).
 \end{proof}

Let $\C_{\chi}$ be the one-dimensional representation of $\finm$
defined  by the character $\chi$.
For $M\in \HC$ 
the space
\begin{align*}
\on{Wh}_{\finm}(M):=
M\*_{U(\finm)}\C_{\chi}
\end{align*}
is equipped with a $(U(\fing), U(\fing,f))$-bimodule structure.
Indeed, there is an obvious 
left $U(\fing)$-module structure on $\on{Wh}_{\finm}(M)$.
To see the right $U(\fing,f)$-module structure
consider the space
$M\* \bw{\bullet}(\finm)$,
which is naturally  a {\em right} module 
over $C(U(\fing))'=U(\fing)\*
\Cl_{\finm}$.
Under this right module structure the element $d'\in C(U(\fing))'$
gives $M\* \bw{\bullet}(\finm)$ the chain complex structure,
and this complex is identical to the 
Chevalley complex for calculating 
the Lie algebra $\finm$-homology 
$H_{\bullet}(\finm,M\* \C_{\chi})$
with coefficient in the diagonal $\finm$-module
$M\* \C_{\chi}$,
where $\finm$ acts on $M$ by $x m=-mx$.
The right $C(U(\fing))'$-action
on $M\* \bw{\bullet}(\finm)$
gives the right $U(\fing,f)$-action  on 
$H_{\bullet}(\finm,M\* \C_{\chi})$,
in particular on 
$H_{0}(\finm,M\* \C_{\chi})=\on{Wh}_{\finm}(M)$.
This action obviously commutes with the left $U(\fing)$-action.

By \cite{Gin08}, we have 
$H_i(\finm, M\* \C_{\-\chi})=0$ for $i\ne 0$,
$M\in \HC$,
and hence, the functor
\begin{align*}
\on{Wh}_{\finm}:\HC\ra (U(\fing),U(\fing,f))\on{-biMod},
\quad M\mapsto \on{Wh}_{\finm}(M)
\end{align*}
is  exact.

Let 
$\mc{C}$ be the full subcategory of
$\fing\on{-Mod}$
consisting of objects 
on which
 $x-\chi(x)$ acts  locally nilpotently  for all $x\in \finm$.
Here, for any algebra $A$,
$A\on{-Mod}$ denotes the category of left $A$-modules.
Note that 
$\on{Wh}_{\finm}(M)$ with $M\in \mc{HC}$ belongs to
$\mc{C}$ when it is considered as a left $\fing$-module.

For an object 
$M$ of $\mc{C}$
consider the space
$M\* \bw{\bullet}(\finm^*)$
as  a  (left)
$C(U(\fing))'$-module.
The cochain complex 
$(M\* \bw{\bullet}(\finm^*),d')$ is identical to the Chevalley complex
for calculating 
Lie algebra $\finm$-cohomology
$H^{\bullet}(\finm, M\* \C_{-\chi})$
with coefficient in the diagonal $\finm$-module
$M\* \C_{-\chi}$.
It follows that 
$H^{\bullet}(\finm,M\* \C_{-\chi})$
is a module over
$U(\fing,f)$,
and
we have a functor
\begin{align*}
 \on{Wh}^{\finm}:\mc{C}\ra U(\fing,f)\on{-Mod},
\quad M\mapsto H^0(\finm, M\* \C_{-\chi}).
\end{align*}
By \cite{Skr02},
one knows that
$H^i(\finm, M\* \C_{-\chi})=0$ for $i>0$,
$M\in \mc{C}$, 
and $\on{Wh}^{\finm}$ defines  an 
{\em equivalence of categories}.

The following assertion can be proved in the same way as 
\cite[Theorem 2.4.2]{Ara07} using 
Proposition \ref{Pro:identification}.
\begin{Pro}\label{Pro:comparison-with-ginzbrug}
 For $M\in \HC$ we have
$\BRS{0}{M}\cong \on{Wh}^{\finm}(\on{Wh}_{\finm}(M)))$
as $U(\fing,f)$-bimodules.
\end{Pro}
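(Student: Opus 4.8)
The plan is to identify the two-step composite BRST complex computing $\BRS{\bullet}{M}$ with the classical (finite-dimensional) Drinfeld–Sokolov reduction realized via a Lagrangian subalgebra, exactly as in \cite[Theorem 2.4.2]{Ara07}. The key point is that the differential $d$ on $C(M)=M\*\D\*\Cl$ built from the full $\fing_{>0}$ can be decomposed, after choosing the Lagrangian $l\subset\fing_{1/2}$ and setting $\finm=l\+\fing_{\geq 1}$, into a piece that "integrates out" the symplectic part $\fing_{1/2}/l$ of the fermions and bosons, and a remaining piece that is precisely the Chevalley differential $d'$ for $\finm$. Concretely, $\D$ is a Weyl algebra of rank $\dim\fing_{1/2}/2$, and the neutral fermions attached to $l$ together with the Weyl-algebra generators form an acyclic Koszul-type subcomplex whose cohomology is one-dimensional; quotienting by it leaves $(M\*\Cl_{\finm},\ad d')$, i.e.\ $\BRS{\bullet}{M}'$.

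First I would set up the filtration argument in parallel to the proof of Proposition \ref{Pro:identification}. Taking $M$ finitely generated (which suffices since cohomology commutes with injective limits) and a good Kazhdan filtration $K_\bullet M$, one computes $\gr_K$ of both complexes. On the classical side this is Theorem \ref{Th:vanishing-Poisson} applied to $\gr_K M\in\overline{\HC}$; the identity $\BRS{0}{\gr_K M}\cong(\gr_K M/\bar I_\chi\gr_K M)^{\ad\fing_{>0}}$ together with the isomorphism $(\gr_K M/\bar I_\chi\gr_K M)^{\ad\fing_{>0}}\isomap(\gr_K M/\finm_\chi\gr_K M)^{\ad\finm}$ from \cite[5.5]{GanGin02} shows that both $\gr_K\BRS{\bullet}{M}$ and the associated graded of $\on{Wh}^{\finm}(\on{Wh}_{\finm}(M))$ vanish outside degree $0$ and agree there. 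Then one produces an actual chain map between the two complexes lifting this graded isomorphism, in the same manner as in \cite[3.2.5]{AKM}: explicitly, the natural surjection $M\*\D\*\Cl\twoheadrightarrow M\*\Cl_{\finm}$ obtained by sending the Weyl-algebra part to its value on the cyclic vector and restricting fermions to $\finm^*$, intertwining $\ad d$ with $\ad d'$. Since the induced map on $\gr_K$ is an isomorphism and the filtrations are convergent (via \cite[Lemma 4.3.3]{Gin08}), the map on cohomology is an isomorphism of $U(\fing,f)$-bimodules.

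Finally I would match the two-step reduction $\on{Wh}^{\finm}\circ\on{Wh}_{\finm}$ with $\BRS{0}{-}'$. By definition $\on{Wh}_{\finm}(M)=H_0(\finm,M\*\C_\chi)$ is computed by the right-module Chevalley complex $M\*\bw{\bullet}(\finm)$, and then $\on{Wh}^{\finm}$ of the result is computed by the left-module Chevalley complex with coefficients in $\finm^*$. Combining these two Chevalley complexes is exactly forming $M\*\Cl_{\finm}$ with differential $d'$, because $\Cl_{\finm}\cong\bw{\bullet}(\finm^*)\*\bw{\bullet}(\finm)$ and $d'$ decomposes accordingly; the vanishing of the higher $\on{Wh}_{\finm}$ (by \cite{Gin08}) and higher $\on{Wh}^{\finm}$ (by \cite{Skr02}) collapses the double complex so that $H^0$ of the total complex is $\on{Wh}^{\finm}(\on{Wh}_{\finm}(M))$. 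This identifies $\BRS{0}{M}'$ with $\on{Wh}^{\finm}(\on{Wh}_{\finm}(M))$ as $U(\fing,f)$-bimodules, and Proposition \ref{Pro:identification}(ii) gives $\BRS{0}{M}'\cong\BRS{0}{M}$, completing the proof.

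**The main obstacle** I anticipate is checking that the explicit chain map $C(M)\to C(M)'$ really intertwines $\ad d$ and $\ad d'$ on the nose — not just up to lower-order terms — and that it is compatible with both the left $U(\fing)$-action and the right $U(\fing,f)$-action simultaneously. The delicate part is bookkeeping the cross-terms in $d$ coming from the structure constants $c_{ij}^k$ that mix $\fing_{1/2}$ with $\fing_{\geq 1}$, and verifying that they map correctly under the quotient; this is where one must invoke the precise form of the construction in \cite[3.2.5]{AKM} rather than a soft filtration argument. Once that chain map is in hand, everything else is either the filtration comparison (routine given Theorem \ref{Th:vanishing-finite-dimensional}) or the standard double-complex collapse for iterated Chevalley homology.
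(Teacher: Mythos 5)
Your proposal is correct and takes the same route as the paper, which outsources the key step to \cite[Theorem 2.4.2]{Ara07}: use Proposition \ref{Pro:identification}(ii) to pass from $\BRS{0}{M}$ to $\BRS{0}{M}'$, and then collapse the complex $(M\*\Cl_{\finm},\ad d')$ to the iterated Whittaker reduction using the vanishing of higher $\finm$-homology from \cite{Gin08} and of higher $\finm$-cohomology from \cite{Skr02}. One caveat: the ``explicit'' chain map $M\*\D\*\Cl\twoheadrightarrow M\*\Cl_{\finm}$ you describe in the second paragraph does not exist as stated ($\D$ is an algebra, not a cyclic module, and $\Cl$ is built on $\fing_{>0}\+\fing_{>0}^*$ rather than on $\finm\+\finm^*$, so there is no sensible ``restriction of the fermions to $\finm^*$''), and calling $(M\*\Cl_{\finm},\ad d')$ a double complex is imprecise because $\Cl_{\finm}$ is only filtered, not $\Z^2$-graded --- the correct device is the spectral sequence of the filtration by $\bw{\bullet}(\finm)$-degree. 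Both flaws are harmless here: the second paragraph only rederives Proposition \ref{Pro:identification}(ii), which you in any case cite directly at the end, and the filtered-complex spectral sequence does exactly what you ask of the purported double complex.
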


Let
\begin{align*}
 Y=\on{Wh}_{\finm}(U(\fing))=U(\fing)\*_{U(\finm)}\C_{\chi}.
\end{align*}
Then 
by Proposition \ref{Pro:comparison-with-ginzbrug}
we obtain the usual realization of $U(\fing,f)$:
\begin{align}
 U(\fing,f)\cong \on{Wh}^{\finm}(Y)\cong \End_{U(\fing)}(Y)^{op}
\label{eq:usal-finite-W}.
\end{align}
The assignment  $U(\fing,f)\Mod\ra \mc{C}$,
$E\mapsto Y\*_{U(\fing,f)}E$,
gives a functor which is quasi-inverse to $\on{Wh}^m$  (\cite{Skr02}).

\begin{Rem}
By Proposition \ref{Pro:comparison-with-ginzbrug}
and \cite[3.5]{Los11},
it follows that
the functor
$\HC\ra U(\fing,f)\on{-biMod}$, $M\mapsto \BRS{0}{M}$,
coincides with the functor $\bullet_{\dagger}$
constructed by Losev \cite{Los11}.
This observation enables us to improve the main result of 
\cite{Ara08-a}; The details will appear elsewhere.
\end{Rem}

Let $I$  be a two-sided ideal of $U(\fing)$.
Then $U(\fing)/I$ is a quotient algebra,
and thus,
$\BRS{0}{U(\fing)/I}$ inherits the
algebra structure from $C(U(\fing)/I)$.
On the other hand,
the exact sequence
$0\ra I\ra U(\fing)\ra U(\fing)/I\ra 0$
induces the exact sequence
\begin{align*}
0\ra H^0_f(I)
\ra U(\fing,f)\ra H^0_f(U(\fing)/I)\ra 0
\end{align*}
by Theorem \ref{Th:vanishing-finite-dimensional}.
Hence
we have the algebra isomorphism
\begin{align}
H^0_f(U(\fing)/I)
\cong U(\fing,f)/H^0_f(I).
\label{eq:H(U/I)}
\end{align}

Let $\mc{C}^I$ denote the full subcategory of $\mc{C}$
consisting of objects which are annihilated by $I$.
\begin{Th}\label{eq:equivalence-of-categoroes}
For a two-sided ideal $I$
of $U(\fing)$ 
we have an equivalence of categories
\begin{align*}
 \mc{C}^I\cong H^0_f(U(\fing)/I)\on{-Mod},
\quad M\mapsto \on{Wh}^{\finm}(M).
\end{align*}
\end{Th}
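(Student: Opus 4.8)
The strategy is to upgrade the known equivalence $\on{Wh}^{\finm}:\mc{C}\isomap U(\fing,f)\on{-Mod}$ of Skryabin \cite{Skr02} to the quotient situation, and then identify the quotient algebra $U(\fing,f)/H^0_f(I)$ with $H^0_f(U(\fing)/I)$ via the isomorphism \eqref{eq:H(U/I)} already established. Concretely, I would first recall the quasi-inverse functor $E\mapsto Y\*_{U(\fing,f)}E$ from $U(\fing,f)\on{-Mod}$ to $\mc{C}$, where $Y=\on{Wh}_{\finm}(U(\fing))$, and observe that under this equivalence the full subcategory $\mc{C}^I$ of objects killed by $I$ corresponds precisely to the full subcategory of $U(\fing,f)\on{-Mod}$ of modules killed by the ideal $H^0_f(I)\subseteq U(\fing,f)$. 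The latter category is of course canonically $H^0_f(I)\backslash U(\fing,f)$-modules, i.e.\ $\bigl(U(\fing,f)/H^0_f(I)\bigr)\on{-Mod}$, and then \eqref{eq:H(U/I)} rewrites this as $H^0_f(U(\fing)/I)\on{-Mod}$.

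The main step is therefore the matching of subcategories. In one direction: if $M\in\mc{C}^I$, then $M$ is an $U(\fing)/I$-module, hence $\on{Wh}^{\finm}(M)=H^0(\finm,M\*\C_{-\chi})$ is built from $M$ and is a module over $H^0_f(U(\fing)/I)\cong U(\fing,f)/H^0_f(I)$ by the functoriality of the BRST construction applied to the algebra map $U(\fing)\twoheadrightarrow U(\fing)/I$; in particular $H^0_f(I)$ acts by zero on it. In the other direction: if $E$ is a $U(\fing,f)$-module with $H^0_f(I)E=0$, I must check that $Y\*_{U(\fing,f)}E$ is annihilated by $I$. For this I would use the realization $U(\fing,f)\cong\End_{U(\fing)}(Y)^{\op}$ together with the fact, following from the exactness in Theorem \ref{Th:vanishing-finite-dimensional} and the description $Y=U(\fing)\*_{U(\finm)}\C_\chi$, that the left ideal $IY\subseteq Y$ corresponds under $\on{Wh}^{\finm}$ to $H^0_f(I)\subseteq U(\fing,f)$; concretely, applying $\on{Wh}_\finm$ to $0\to I\to U(\fing)\to U(\fing)/I\to 0$ and using Proposition \ref{Pro:comparison-with-ginzbrug} identifies $\on{Wh}^{\finm}(IY)$ with $H^0_f(I)$, so $IY=Y\cdot H^0_f(I)$ as the Skryabin equivalence carries submodules to submodules. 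Then $I\cdot(Y\*_{U(\fing,f)}E)=(IY)\*_{U(\fing,f)}E=(Y\cdot H^0_f(I))\*_{U(\fing,f)}E=Y\*_{U(\fing,f)}H^0_f(I)E=0$.

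Finally I would note that the two functors $M\mapsto\on{Wh}^{\finm}(M)$ and $E\mapsto Y\*_{U(\fing,f)}E$ restrict to mutually quasi-inverse equivalences between $\mc{C}^I$ and $\bigl(U(\fing,f)/H^0_f(I)\bigr)\on{-Mod}$, simply because they already are quasi-inverse on the ambient categories $\mc{C}$ and $U(\fing,f)\on{-Mod}$ and we have just shown each restricts correctly; hence the desired equivalence $\mc{C}^I\cong H^0_f(U(\fing)/I)\on{-Mod}$ follows after applying \eqref{eq:H(U/I)}. The only genuinely delicate point is the identification $IY=Y\cdot H^0_f(I)$, i.e.\ that the Skryabin equivalence intertwines the two-sided ideal $H^0_f(I)$ of $U(\fing,f)$ with the $U(\fing)$-submodule $IY$ of $Y$; this uses the $(U(\fing),U(\fing,f))$-bimodule structure on $Y$ and the compatibility of $\on{Wh}_\finm$, $\on{Wh}^\finm$ with $H^0_f$ supplied by Proposition \ref{Pro:comparison-with-ginzbrug}, together with the exactness of $H^0_f$ on $\HC$ from Theorem \ref{Th:vanishing-finite-dimensional}(ii). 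Everything else is bookkeeping with the equivalences already in place.
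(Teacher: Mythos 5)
Your overall framework matches the paper's: both identify, via \eqref{eq:H(U/I)}, the target category $H^0_f(U(\fing)/I)\on{-Mod}$ with the full subcategory of $U(\fing,f)\on{-Mod}$ of modules killed by $H^0_f(I)$, and both then reduce to checking that Skryabin's equivalence and its quasi-inverse restrict correctly to the two subcategories. The first restriction ($M\in\mc{C}^I\Rightarrow\on{Wh}^\finm(M)$ is a $U(\fing,f)/H^0_f(I)$-module) is treated as routine on both sides. Where you genuinely differ from the paper is in the second restriction: the paper simply cites ``the proof of [Gin08, Theorem 4.5.2]'' for the fact that $Y\*_{U(\fing,f)}E\in\mc{C}^I$ when $H^0_f(I)E=0$, whereas you give a self-contained argument internal to the paper's toolkit, proving $IY=Y\cdot H^0_f(I)$ by matching the submodule $IY\subset Y$ (computed as $\on{Wh}_\finm(I)$ via exactness of $\on{Wh}_\finm$) with the ideal $H^0_f(I)\subset U(\fing,f)$ under $\on{Wh}^\finm$, using Proposition \ref{Pro:comparison-with-ginzbrug} and the exactness of $H^0_f$ (Theorem \ref{Th:vanishing-finite-dimensional}). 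This is correct, and it buys a proof that does not lean on an external reference for the one nontrivial direction; the small point you should make explicit, since you tacitly use it, is that the isomorphism of Proposition \ref{Pro:comparison-with-ginzbrug} is natural in $M$ (or at least compatible with $I\hookrightarrow U(\fing)$), so that the subobject $\on{Wh}^\finm(IY)\subset\on{Wh}^\finm(Y)$ really does correspond to $H^0_f(I)\subset U(\fing,f)$ under the fixed identification $\on{Wh}^\finm(Y)\cong U(\fing,f)$.
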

\begin{proof}
By
\eqref{eq:H(U/I)},
$H^0_f(U(\fing)/I)\on{-Mod}$ can be identified
by (\ref{eq:H(U/I)}) with the
full subcategory of $U(\fing,f)\on{-Mod}$  consisting
objects $M$ which are annihilated by 
$H_f^0(I)$.
Therefore, thanks to Skryabin's equivalence,
it is  enough to check that
$\on{Wh}^{\finm}(M)\in \BRS{0}{U(\fing)/I}\on{-Mod}
$ for $M\in \mc{C}^I$,
and that $Y\*_{U(\fing,f)}E\in \mc{C}^I$ for $E\in \BRS{0}{U(\fing)/I}\on{-Mod}$.
The former is easy to see.
The latter follows from the proof of
\cite[Theorem 4.5.2]{Gin08}.
\end{proof}

\section{Frenkel-Zhu's bimodules and  Zhu's $C_2$-modules}
\label{section:Zhu-algberas}
Recall that a  vertex algebra 
is a vector space  $V$
equipped with
an element
$\1\in V$
called the vacuum,
$T\in \End(V)$,
and 
 a linear map
 \begin{align*}
Y(?,z):V\ra (\End V)[[z,z\inv]], 
\quad a\mapsto Y(a,z)=a(z)=\sum_{n\in \Z}
a_{(n)}z^{-n-1},
 \end{align*} 
such that
\begin{enumerate}
\item $\1(z)=\id_V$,
 \item $a_{(n)}b=0$ for $n\gg 0$,
$a,b\in V$,
and $a_{(-1)}\1=a$,
\item $(Ta)(z)=[T, a(z)]=\frac{d}{dz}a(z)$ for $a\in V$,
\item
$(z-w)^n [a(z),b(w)]=0$ in $\End (V)$
for $n\gg0$,
$a, b\in  V$.
\end{enumerate}

For a vertex algebra
$V$
we have the {\em Borcherds identity}
\begin{align*}
 \sum_{i=0}^{\infty}&\begin{pmatrix}
		     p\\i
		    \end{pmatrix}
(a_{(r+i)}b)_{(p+q-i)}=
\sum_{i=1}^{\infty}(-1)^i\begin{pmatrix}
			  r\\i
			 \end{pmatrix}
(a_{(p+r-i)}b_{(q+i)}-(-1)^r
b_{(q+r-i)}a_{(p+i)})\nno
\end{align*}
in $\End  V$
for all $p,q,r\in \Z$,
$a,b,c\in V$.

A module  over  a vertex algebra
$V$  is a vector space  $M$
equipped with a linear map
 \begin{align*}
Y^M(?,z):V \ra (\End M)[[z,z\inv]], 
\quad a\mapsto a^M(z)=\sum_{n\in \Z}
a_{(n)}^M z^{-n-1},
 \end{align*} 
such that
$Y^M(\1,z)=\id_M$,
$a_{(n)}^Mm=0$ for $n\gg 0$,
$a\in V$, $m\in M$,
and 
\begin{align*}
 \sum_{i=0}^{\infty}&\begin{pmatrix}
		     p\\i
		    \end{pmatrix}
(a_{(r+i)}b)^M_{(p+q-i)}=
\sum_{i=1}^{\infty}(-1)^i\begin{pmatrix}
			  r\\i
			 \end{pmatrix}
(a_{(p+r-i)}^Mb_{(q+i)}^M-(-1)^r
b_{(q+r-i)}^Ma_{(p+i)}^M)\nno
\end{align*}
in $\End  M$
for all $p,q,r\in \Z$,
$a,b,c\in V$.
In particular $V$ itself if a module over $V$
called the {\em adjoint module}.
Let $V\on{-Mod}$ be the abelian  category of $V$-modules.
Below if no confusion arises we write $a_{(n)}$ for $a_{(n)}^M$.

For a $V$-module $M$ 
set
 \begin{align*}
C_2(M):=\haru_{\C}\{a_{(-2)}m| a\in V, m\in M\}.
 \end{align*}
{\em Zhu's $C_2$-algebra} \cite{Zhu96}
of $V$ is by definition the space
\begin{align*}
R_V=:V/C_2(V)
\end{align*}
equipped with the
Poisson algebra structure given by
\begin{align*}
 \bar a \cdot \bar b=\overline{a_{(-1)}b},\quad
\{\bar a,\bar b\}=\overline{a_{(0)}b}\quad\text{for }a,b \in V,
\end{align*}
where $\bar a=a+C_2(V)$.
{\em Zhu's $C_2$-module} of $M$
is the space $M/C_2(M)$ equipped with the  Poisson module structure
over  
$R_V$ given by
\begin{align*}
 \bar a\cdot \bar m=\overline{a_{(-1)}m},\quad 
\{\bar a,\bar m\}=\overline{a_{(0)}m}\quad\text{for }a\in V,\ m\in M.
\end{align*}

A vertex algebra  $V$ is called {\em finitely strongly generated} if $R_V$ is finitely
generated
as a ring;
it is called {\em rational} if
any $V$-module  is completely reducible;
it is called {\em $C_2$-cofinite}
if Zhu's $C_2$-algebra $R_V$ is finite-dimensional.
The $C_2$-cofiniteness
 condition is equivalent to the lisse condition in the sense of
\cite{BeiFeiMaz} (\cite{Ara12}).

A vertex algebra
$V$ is called {\em conformal}
if it is equipped with a vector $\omega\in V$,
called the {\em conformal  vector},
such that the corresponding
field $Y(\omega,z)=\sum_{n\in \Z}L_n z^{-n-2}$
satisfies the relation
\begin{align*}
 &[L_m,
 L_n]=(m-n)L_{m+n}+\frac{(m^3-m)\delta_{m+n,0}}{12}c_V\quad\text{for
 some }c_V\in\C,\\
&L_{-1}=T,\\
&L_0\text{ is diagonalizable on }V.
\end{align*}
The number $c_V$ is called the {\em central charge} of $V$.

In this paper we assume that a vertex algebra $V$ is 
conformal
and
$\frac{1}{2}\Z$-graded\footnote{This is because 
$W$-algebras are $\frac{1}{2}\Z_{\geq 0}$-graded
in general. However since 
principal $W$-algebras are $\Z_{\geq 0}$-graded 
 it is enough to
consider the $\Z$-graded case
in order to prove Main Theorem.} 
 with respect to $L_0$:
\begin{align*}
V=\bigoplus_{d\in \frac{1}{2}\Z}V_{d},\quad V_{d}=\{a\in V| L_0 a=d a\}.
\end{align*}For a homogeneous elements $a\in V$
we denote by
 $\on{wt}(a)$ 
the eigenvalue of $L_0$ on $a$.

A $V$-module $M$
is called {\em graded} if 
\begin{align*}
M=\bigoplus_{d\in \C}M_d,
\quad M_d=\{m\in M|  (L_0-d)^r m=0,\ r\gg 0\};
\end{align*}
it is called {\em positively graded}
if in addition
there exists a finite set $\{d_1,\dots,d_r\}\subset \C$
such that $M_d=0$ unless $d\in \bigcup_{i=1}^r(d_i+\frac{1}{2}\Z_{\geq
0})$.
If $V$ is $C_2$-cofinite any finitely generated $V$-module is positively
graded (\cite{AbeBuhDon04}).
Let $V\on{-gMod}$ be the abelian full subcategory of
$V\on{-Mod}$ consisting of  positively graded $V$-modules,
$\on{Irr}(V)$  the set of isomorphism classes of simple objects of 
$V\on{-gMod}$.

Let $\Zhu(V)$ be the ($L_0$-twisted) {\em Zhu's algebra}
 of $V$ 
(\cite{FreZhu92,De-Kac06}).
By definition
  \begin{align*}
  \Zhu(V)=V/O(V),
 \end{align*}
where $O(V)$ is
the subspace of $V$
spanned by the vectors
\begin{align*}
a\circ b:=\sum_{i\geq 0}\begin{pmatrix}
			  \on{wt}(a)\\ i
			 \end{pmatrix}a_{(i-2)}b
\end{align*}with homogeneous vectors
 $a, b\in V$.
The multiplication $*$ 
of $\Zhu(V)$  given  by
\begin{align*}
a* b=
\sum_{i\geq 0}\begin{pmatrix}
			  \on{wt}(a)\\ i
			 \end{pmatrix}a_{(i-1)}b.
\end{align*}

Let $M$ be a 
$V$-module.
{\em Frenkel-Zhu's bimodule} \cite{FreZhu92}
associated to
 $M$
is the
bimodule
$\Zhu(M)$
over $\Zhu(V)$
defined by
  \begin{align*}
  \Zhu(M)=M/O(M),
 \end{align*}
where $O(M)$ is
the subspace of $M$
spanned by the elements
\begin{align*}
a\circ m:=\sum_{i\geq 0}\begin{pmatrix}
			  \on{wt}(a)\\ i
			 \end{pmatrix}a_{(i-2)}m
\end{align*}with homogeneous vectors  $a\in V$
and
 $m\in M$.
The bimodule structure of
$\Zhu(M)$
is 
given by
\begin{align}
a* m=
\sum_{i\geq 0}\begin{pmatrix}
			  \on{wt}(a)\\ i
			 \end{pmatrix}a_{(i-1)}m,
\quad 
m* a=
\sum_{i\geq 0}\begin{pmatrix}
			  \on{wt}(a)-1\\ i
			 \end{pmatrix}a_{(i-1)}m.
\label{eq:bimodule-structure-of-A(M)}
\end{align}
Note that
\begin{align}
 a*m-m*a= \sum_{i\geq 0}\begin{pmatrix}
			      \on{wt}(a)-1\\ i
			     \end{pmatrix}a_{(i)}m.
\label{eq:adjoing-action-on-Zhu-algebra-on-bimodule}
\end{align}

\begin{Lem}[{\cite[Proposition 1.5.4]{FreZhu92}}]
\label{Lem:FreZhu}
The assignment
$M\mapsto \Zhu(M)$
defines a right exact functor from 
$V\on{-Mod}$ to 
$\Zhu(V)\on{-biMod}$.
 \end{Lem}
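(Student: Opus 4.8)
The plan is to verify the three component assertions of the statement: (1) $O(M)$ is a subspace stable enough that $\Zhu(M) = M/O(M)$ carries a well-defined $\Zhu(V)$-bimodule structure via the formulas \eqref{eq:bimodule-structure-of-A(M)}; (2) a $V$-module homomorphism $\phi\colon M\to N$ induces a $\Zhu(V)$-bimodule homomorphism $\Zhu(\phi)\colon \Zhu(M)\to\Zhu(N)$; and (3) the functor is right exact. The essential input for all of this is the Borcherds identity recalled above, applied with suitable choices of $p,q,r$; this is exactly the computational core of \cite[Proposition 1.5.4]{FreZhu92}, and since the excerpt permits citing that result, the proof can be kept to an outline.

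First I would establish the bimodule axioms. One shows that for homogeneous $a,b\in V$ and $m\in M$ the products $a*(b*m)$ and $(a*_{\Zhu(V)} b)*m$ agree modulo $O(M)$, and similarly on the right, and that left and right actions commute; each identity reduces, after expanding the binomial coefficients, to a manipulation of the Borcherds identity together with the observation that $a\circ b\in O(V)$ and $a\circ m\in O(M)$ lie in the respective ``$O$'' subspaces. One must also check that $O(M)$ is preserved by the would-be actions, i.e.\ $a*(b\circ m)\in O(M)$ and $(a\circ b)*m\in O(M)$, so that the induced operations on the quotient are well defined; again this is a Borcherds-identity computation. The key technical point throughout is bookkeeping of the weight shifts: the $\on{wt}(a)$ versus $\on{wt}(a)-1$ discrepancy between left and right multiplication in \eqref{eq:bimodule-structure-of-A(M)} is precisely what makes the two one-sided actions commute, and \eqref{eq:adjoing-action-on-Zhu-algebra-on-bimodule} records the resulting commutator formula that one uses repeatedly.

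Next, functoriality is immediate: a $V$-module map $\phi\colon M\to N$ satisfies $\phi(a_{(n)}m)=a_{(n)}\phi(m)$ for all $n$, hence $\phi(O(M))\subseteq O(N)$ and $\phi(a\circ m)=a\circ\phi(m)$, so $\phi$ descends to $\Zhu(\phi)\colon M/O(M)\to N/O(N)$, and this descended map is visibly compatible with the left and right $\Zhu(V)$-actions because those are built from the same modes $a_{(i-1)}$ that $\phi$ intertwines. Composition and identities are preserved on the nose.

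Finally, right exactness. Given an exact sequence $M'\xrightarrow{f} M\xrightarrow{g} M''\to 0$ of $V$-modules, $g$ is surjective so $\Zhu(g)$ is surjective; and $\ker\Zhu(g)$ is the image in $M/O(M)$ of $g^{-1}(O(M'')) = \ker g + (\text{lifts of } O(M'')) = f(M') + \{a_{(i-2)}m : a\in V,\ m\in M\} + O(M)$, using that every element of $O(M'')$ lifts to an element of the form $a\circ m$ by surjectivity of $g$; hence $\ker\Zhu(g)$ is exactly the image of $\Zhu(M')=\Zhu(f)(M'/O(M'))$. Thus $\Zhu(M')\to\Zhu(M)\to\Zhu(M'')\to 0$ is exact. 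I expect the main obstacle to be purely organizational rather than conceptual: the Borcherds-identity verifications in step (1) are numerous and the index/binomial bookkeeping is delicate, so in the write-up I would simply invoke \cite[Proposition 1.5.4]{FreZhu92} for the bimodule structure and record only the right-exactness argument, which is the part actually used later in the paper.
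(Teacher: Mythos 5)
The paper gives no proof of this lemma, citing only \cite[Proposition 1.5.4]{FreZhu92}, and your proposal likewise defers the Borcherds-identity verifications to that reference while correctly spelling out the right-exactness argument, so this is essentially the same approach. One small slip: in computing $g^{-1}(O(M''))$ the set you write as $\{a_{(i-2)}m : a\in V,\ m\in M\}$ is not what you mean (that set contains, e.g., $C_2(M)$ and is not contained in $O(M)$); the lift of a generator $a\circ m''$ of $O(M'')$ is $a\circ m$ for any $m$ with $g(m)=m''$, which already lies in $O(M)$, so the correct identity is simply $g^{-1}(O(M''))=\ker g + O(M)=f(M')+O(M)$, and the conclusion is unaffected.
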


\smallskip

Zhu's $C_2$-algebra 
$R_V$
and Zhu's algebra
$\Zhu(V)$ 
are  related as follows:
Set 
\begin{align*}
V_{\leq p}=\bigoplus_{d\leq p}V_{d}
\end{align*}
and let  $F_p \Zhu (V)$ be the image of
$V_{\leq p}$ in $\Zhu(V)$.
Then
$F_{\bullet} A(V)$
defines an increasing,
exhaustive
$\frac{1}{2}\Z$-filtration
of $\Zhu(V)$
satisfying (\ref{eq:compatible-filtration}) (\cite{Zhu96}).
(In the cases that we will consider in this paper 
the filtration $F_{\bullet}A(V)$
will be  separated as well; this is true, for instance,
if $V$ is positively graded.)
On the other hand
the grading  of $V$
induces the grading of $R_V$:
$R_V=\bigoplus_{p\in \frac{1}{2}\Z}(R_V)_{p}$,
where $(R_V)_{p}$ is the image
of $V_{p}$ in $R_V$:
$(R_V)_{p}\cong V_{p}/C_2(V)_{p}$,
$C_2(V)_{p}=C_2(V)\cap V_{p}$.
The linear map
\begin{align*}
(R_V)_{p}\ra F_p\Zhu(V)/F_{p-1/2}\Zhu(V),\quad
a+ C_2(V)_{p}\mapsto a +O(V)\cap V_{\leq p}
+V_{\leq p-1/2}
\end{align*}defines a surjective homomorphism
\begin{align}\label{eq:surjective-to-Zhu}
\pi_V: R_V\twoheadrightarrow  \gr_F \Zhu(V)
\end{align}
of graded Poisson algebras
(\cite[Proposition 3.2]{ALY}).
It follows that
$A(V)$ is finite-dimensional if $V$ is $C_2$-cofinite.

For a 
graded $V$-module
$M=\bigoplus_{d\in  \C }M_{d}$,
there is a similar relation between
$M/C_2(M)$ and $\Zhu(M)$ as well:
Set
\begin{align*}
M_{\leq p}=\bigoplus_{d\in  p-\frac{1}{2}\Z_{\geq 0}}M_{d},
\end{align*}
and let  $F_p \Zhu(M)$ be the image of
$M_{\leq p}$ in $\Zhu(M)$.
Then
the space 
 $\gr_F \Zhu(M)=\bigoplus_{p\in \C}F_p \Zhu(M)/F_{p-1/2}\Zhu(M)$
is a graded Poisson module over 
$\gr_F \Zhu(V)$,
and hence over $R_V$ by (\ref{eq:surjective-to-Zhu}).

The following assertion can be proved in the same manner as 
\cite[Proposition 3.2]{ALY}.
\begin{Lem}\label{Lem:surjective-hom-ZHu's-bimodule}
Let $M$ be a  graded
$V$-module.
The linear map
$M_p/C_2(M)_p\ra F_p\Zhu(M)/F_{p-1/2}\Zhu(M)$,
$m+ C_2(M)_{p}\mapsto m +O(M)\cap M_{\leq p}
+M_{\leq p-1/2}$,
defines a
 surjective homomorphism 
\begin{align*}
\pi_M:M/C_2(M)\twoheadrightarrow \gr_F \Zhu(M)
\end{align*}
  of Poisson modules over 
$R_V$.
Here
$C_2(M)_p=C_2(M)\cap M_p$.
\end{Lem}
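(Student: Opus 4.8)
The plan is to follow the proof strategy of \cite[Proposition 3.2]{ALY} verbatim, adapting it from the algebra $V$ acting on itself to the case of a general graded module $M$. First I would check that the proposed map $\pi_M$ is well-defined, i.e.\ that $C_2(M)_p$ is sent into $F_{p-1/2}\Zhu(M)$. This comes down to the observation that for homogeneous $a\in V$ and $m\in M$, the element $a_{(-2)}m$ lies in $M_{\mathrm{wt}(a)+1+\mathrm{wt}(m)}$ and, working modulo $O(M)$, the Borcherds identity lets one rewrite $a_{(-2)}m$ in terms of elements $b_{(j)}m'$ with strictly smaller weight; more precisely $a\circ m = a_{(-2)}m + \binom{\mathrm{wt}(a)}{1}a_{(-1)}m + \dots \in O(M)$, so modulo $O(M)$ one has $a_{(-2)}m \equiv -\sum_{i\geq 1}\binom{\mathrm{wt}(a)}{i}a_{(i-2)}m$, and every term on the right has filtration degree $\leq p-1/2$ where $p$ is the degree of $a_{(-2)}m$. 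Hence $\pi_M$ descends to a graded linear map $M/C_2(M)\to \gr_F\Zhu(M)$.

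Next I would prove surjectivity of $\pi_M$. Since $\Zhu(M)$ is by construction spanned by images of homogeneous elements of $M$, and $F_p\Zhu(M)$ is the image of $M_{\leq p}$, every class in $F_p\Zhu(M)/F_{p-1/2}\Zhu(M)$ is represented by (the image of) some $m\in M_p$, which is exactly $\pi_M(m+C_2(M)_p)$; so surjectivity is immediate from the definitions. The substance of the statement is therefore that $\pi_M$ is a homomorphism of Poisson modules over $R_V$, i.e.\ that it intertwines the $R_V$-module structure on $M/C_2(M)$ (given by $\bar a\cdot\bar m=\overline{a_{(-1)}m}$) with the $\gr_F\Zhu(V)$-module structure on $\gr_F\Zhu(M)$ transported along $\pi_V$ of \eqref{eq:surjective-to-Zhu}, and likewise for the bracket $\{\bar a,\bar m\}=\overline{a_{(0)}m}$. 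For the associative part, I would take homogeneous $a\in V_r$ and $m\in M_s$ and compare $\pi_M(\overline{a_{(-1)}m})$, which is the class of $a_{(-1)}m$ in $F_{r+s}\Zhu(M)/F_{r+s-1/2}\Zhu(M)$, with $(a+O(V))*(m+O(M)) = \sum_{i\geq 0}\binom{r}{i}a_{(i-1)}m + O(M)$: all terms with $i\geq 1$ have strictly lower filtration degree, so the two agree in the associated graded. The bracket is handled identically using \eqref{eq:adjoing-action-on-Zhu-algebra-on-bimodule}: modulo $O(M)$, $a*m-m*a=\sum_{i\geq 0}\binom{r-1}{i}a_{(i)}m$, whose leading term (degree $r+s-1$) is $a_{(0)}m$, matching $\pi_M(\{\bar a,\bar m\})$; and the Poisson-module compatibility axioms for $\gr_F$ follow because $\gr_F\Zhu(V)$ and $\gr_F\Zhu(M)$ inherit them from the filtered associative/Lie structures on $\Zhu(V)$ and $\Zhu(M)$ satisfying \eqref{eq:compatible-filtration}.

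The one point requiring genuine care — the main obstacle — is the bookkeeping with the two a priori different grading conventions on $M$: the ``geometric'' grading $M=\bigoplus_{d}M_d$ by $L_0$-eigenvalue versus the filtration $F_\bullet\Zhu(M)$ indexed so that $F_p$ is the image of $M_{\leq p}=\bigoplus_{d\in p-\frac12\Z_{\geq 0}}M_d$. One must check that $a_{(i-1)}$ and $a_{(i-2)}$ shift $L_0$-weight by exactly $\mathrm{wt}(a)-i$ and $\mathrm{wt}(a)-i+1$ respectively (immediate from $[L_0,a_{(n)}]=(\mathrm{wt}(a)-n-1)a_{(n)}$), and hence that all the ``correction terms'' in the formulas for $\circ$, $*$ and the bracket genuinely drop the filtration degree by at least $1/2$, so that they vanish in $\gr_F$. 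Once this weight-shift accounting is set up once and for all, the rest is the same formal manipulation as in \cite[Proposition 3.2]{ALY}, and I would simply refer the reader there for the remaining routine verifications.
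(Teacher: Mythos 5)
Your proof is correct and follows exactly the route the paper intends: the paper's own "proof" of Lemma~\ref{Lem:surjective-hom-ZHu's-bimodule} consists of the single sentence that it "can be proved in the same manner as \cite[Proposition 3.2]{ALY}," which is precisely the ALY-verbatim adaptation you carry out. Your bookkeeping of the weight shifts under $a_{(i-1)}$ and $a_{(i-2)}$, and your verification that the correction terms in $a\circ m$, $a*m$ and $a*m - m*a$ all drop the filtration degree, is the right content to supply.
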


Now assume for a moment that 
$V$ is $\Z_{\geq 0}$-graded with respect to $L_0$.
Let
 $\U(V)=\bigoplus_{d\in \Z}\U(\V)_d$ be the
current algebra \cite{FreZhu92,MatNagTsu05} of $V$,
which is a degreewise complete graded topological algebra.
Then a $V$-module is the same as 
a continuous representation of
$\U(V)$.
Since
\begin{align}
\Zhu(V)\cong \U(V)_0/\overline{\sum_{p>0}\U(V)_{p}\U(V)_{-p}}
\label{eq:zhu-iso}
\end{align}
(\cite{NagTsu05}),
where $\overline{U}$ denotes the degreewise closure of $U$,
an $\Zhu(V)$-module $E$ can be regarded as a module over 
$\U(V)_{\leq 0}
:=\bigoplus\limits_{p\leq 0}\U(V)_p$ on which
$\U(V)_{p}$, $p<0$, acts trivially.
Set
\begin{align}
M_V(E):=\U(V)\*_{\U(\V)_{\leq 0}}E\in V\on{-gMod},
\label{eq:def-of-Verma}
\end{align}
and
let $L_V(E)$ be the
unique simple  quotient of 
$M_V(E)$.
By Zhu's theorem \cite{Zhu96}
we have
\begin{align}
 \on{Irr}(V)=
\{L_V(E)|  E\in \on{Irr}(\Zhu(V))\},
\label{Zhus-theorem}
\end{align}
where,
for any algebra
$A$,
$\on{Irr}(A)$
denotes the set of isomorphism classes of simple objects of 
$A\on{-Mod}$.

\section{The effect of shifts of conformal vector to Frenkel-Zhu's bimodules}
\label{section:change-of-cinformal}
Let $V$ be a $\frac{1}{2}\Z$-graded conformal vertex algebra
with conformal vector $\omega$.
Suppose that
there exists an element $\xi\in V$ that satisfies  the conditions
\begin{align*}
 L_n\xi=\delta_{n,0}\xi,\quad \xi_{(n)}\xi=\kappa
 \delta_{n,1}\1\quad\text{for }n\in \Z_{\geq 0}
\end{align*}
with some $\kappa\in \C$, and that
$\xi_{(0)}$ acts semisimply on $V$  with eigenvalues in $\Z$.
Then one can ``shift'' the conformal vector
$\omega$ by $\frac{1}{2}L_{-1}\xi$ to obtain a new conformal vector.
Namely
\begin{align*}
 \omega_{\xi}:=\omega+\frac{1}{2}\xi_{(-2)}\1
\end{align*}
also defines a conformal vector of $V$, with central charge
$c_{new}=c_{old}-3\kappa$,
where $c_{\old}$ is the central charge of $V$ with respect to $\omega$.

Although the definition of Zhu's algebra and Frenkel-Zhu's bimodules depend on
the choice of a conformal vector,
the above shift of a conformal vector does not
change the 
structure of Zhu's algebra
nor Frenkel-Zhu's bimodules as we show below:
for a $V$-module $M$
let
$\Zhu^{new}(M)$ (temporary)
 denote
Frenkel-Zhu's bimodule of $M$
with respect  to the conformal vector $\omega_{\xi}$
and let $\Zhu^{\old}(M)$
(temporary)
denote
Frenkel-Zhu's bimodule 
with respect  to the conformal vector $\omega$.

Let $\Delta(z)$ be  Li's $\Delta$-operator \cite{Li97}
associated with  $\xi$:
\begin{align*}
 \Delta(z)=z^{\frac{\xi_{(0)}}{2}}\exp(\sum_{n\geq 1}
\frac{\xi_{(n)}}{-2n}(-z)^n).
\end{align*}
 \begin{Pro}\label{Pro:change-of-grading-does-not-change-the-grading}
$ $

\begin{enumerate}
 \item  The map
$V\ra V$,
$a\mapsto \Delta(1)a$, induces an algebra isomorphism
\begin{align*}
\Zhu^{old}(V)\isomap \Zhu^{new}(V).
\end{align*}
 \item  
Let $M$ be a 
$V$-module  on which $\xi_{(0)}$ acts semisimply.
Then  the  map
$M\ra M$,
$m\mapsto \Delta(1)m$, induces an
$A^{old}(V)(\cong A^{new}(V))$-bimodule isomorphism
\begin{align*}
\Zhu^{old}(M)\isomap \Zhu^{new}(M).
\end{align*}
\end{enumerate}
 \end{Pro}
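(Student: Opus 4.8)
The plan is to deduce Proposition \ref{Pro:change-of-grading-does-not-change-the-grading} from the general theory of Li's $\Delta$-operator \cite{Li97}. Recall that the essential property of $\Delta(z)$ is that for any $V$-module $M$, the twisted fields $Y^{M_\xi}(a,z):=Y^M(\Delta(z)a,z)$ define a new $V$-module structure $M_\xi$ on the same underlying space, and that if we perform the twist with respect to $\omega_\xi$-gradings then $M_\xi$ becomes an $\omega_\xi$-module precisely when $M$ was an $\omega$-module. The point of the proposition is that this twist, evaluated at $z=1$, descends to Zhu's algebra and Frenkel--Zhu's bimodules.

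First I would set up the bookkeeping: write $\on{wt}_{new}(a)=\on{wt}_{old}(a)+\frac{1}{2}\xi_{(0)}$-eigenvalue of $a$, which makes sense since $L_0^{new}=L_0^{old}+\frac{1}{2}\xi_{(0)}$ (this follows from $\omega_\xi=\omega+\frac12\xi_{(-2)}\1$ and the commutation relations $L_n\xi=\delta_{n,0}\xi$), and since $\xi_{(0)}$ acts semisimply with integer eigenvalues the new grading is again $\frac12\Z$-graded. For part (i), the key computation is that $\Delta(1)$ intertwines the two $\circ$-products, i.e.\ $\Delta(1)(a\circ_{old}b)\equiv \Delta(1)a\circ_{new}\Delta(1)b \pmod{O^{new}(V)}$, and similarly for the $*$-products. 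This is carried out by expanding $\Delta(1)$ in terms of the operators $\xi_{(n)}$, $n\geq 0$, using the relations $\xi_{(n)}\xi=\kappa\delta_{n,1}\1$ to control the commutators, and invoking the standard identities relating $a_{(i-2)}$ to $(\Delta(1)a)_{(i-2)}$ — these are exactly the identities in \cite{Li97} that show $\Delta(z)$ converts $\omega$-expansions into $\omega_\xi$-expansions. The cleanest route is probably to observe that the defining formula for $O(V)$ and for the $*$-product is, up to the substitution $z\mapsto z+1$ in the generating series, nothing but the statement that $a\circ b=\Res_z Y(a,z)b\,\frac{(1+z)^{\on{wt}(a)}}{z^2}$ and $a*b=\Res_z Y(a,z)b\,\frac{(1+z)^{\on{wt}(a)}}{z}$; then the $\Delta$-twist $Y\mapsto Y(\Delta(z)\,\cdot\,,z)$ together with the weight shift $\on{wt}_{old}\mapsto\on{wt}_{new}$ makes the two residue formulas match term by term. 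Part (ii) is then identical, using the bimodule formulas \eqref{eq:bimodule-structure-of-A(M)}; the hypothesis that $\xi_{(0)}$ acts semisimply on $M$ is what guarantees $\Delta(1)$ is well-defined on $M$ and preserves the filtration by generalized $L_0$-eigenspaces.

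The main obstacle I expect is purely computational: verifying that the residue/generating-function manipulations genuinely commute with $\Delta(1)$ requires the associativity/Borcherds identity for $V$-modules and a careful treatment of the infinite sum defining $\Delta(1)=\exp(\sum_{n\geq1}\frac{\xi_{(n)}}{-2n}(-1)^n)$ — one must check this operator is well-defined (it is, since $\xi_{(n)}$ lowers $L_0^{old}$-degree by $n-1$ on any graded module, so on each graded piece only finitely many terms contribute) and that it is invertible with inverse given by $\Delta(1)^{-1}$ obtained from $\Delta(z)^{-1}$. Once this is in place, bijectivity of the induced maps $\Zhu^{old}(V)\to\Zhu^{new}(V)$ and $\Zhu^{old}(M)\to\Zhu^{new}(M)$ is automatic because $\Delta(1)$ is an invertible linear map carrying $O^{old}$ onto $O^{new}$ (the inverse statement being proved symmetrically with $\xi$ replaced by $-\xi$, or directly from $\Delta(1)^{-1}$), and compatibility with products and bimodule actions has already been checked. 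I would also remark that this is the vertex-algebra counterpart of the classical fact that shifting a grading element does not change Zhu's algebra, and that the statement is implicitly used in \cite{De-Kac06}; so beyond assembling the identities from \cite{Li97}, no new idea is needed.
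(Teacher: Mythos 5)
Your proposal is correct and follows essentially the same route as the paper: the paper reduces the proposition to a lemma showing that $\Delta(1)$ intertwines $\circ_{old}$ with $\circ_{new}$ (and likewise for $*_{old}$, $*_{new}$), proved by expressing these operations as residues $\Res_z Y(a,z)m\,\frac{(1+z)^{\on{wt}(a)}}{z^{2}}$ etc., applying Li's identity $\Delta(1)Y(a,z)=Y(\Delta(z+1)a,z)\Delta(1)$, and matching the resulting power series against the $\on{wt}_{new}$-weighted residues term by term. The only detail you leave implicit but the paper writes out is the verification that for $u_n$ the coefficient of $z^{-n}$ in $\Delta(z)$, one has $\on{wt}(u_na)_{new}=\on{wt}(a)_{old}+\lambda-n$ where $\xi_{(0)}$ acts on $a$ by $2\lambda$; this is the bookkeeping step making the shifted exponents align.
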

Proposition \ref{Pro:change-of-grading-does-not-change-the-grading}
follows from the following lemma.
\begin{Lem}
Let $M$ be a 
$V$-module  on which $\xi_{(0)}$ acts semisimply.
Then 
\begin{align*}
\Delta(1)(a\circ_{old} m)=(\Delta(1)a)\circ_{new}(\Delta(1)m),\\
\Delta(1)(a*_{old} m)=(\Delta(1)a)*_{new}(\Delta(1)m),\\
\Delta(1)(m*_{old} a)=(\Delta(1)m)*_{new}(\Delta(1)a)
\end{align*}for $a\in V$,
$m\in M$.
Here $\circ_{old}$ and  $*_{old}$ 
(respectively, $\circ_{new}$ and $*_{new}$)
are
operations (\ref{eq:bimodule-structure-of-A(M)})
with respect to the grading defined by
$L_{0,old}$ (respectively, $L_{0,new}$).
Here $Y(\omega,z)=\sum_{n\in \Z}L_{n,old}z^{-n-2}$,
$Y(\omega_{\xi},z)=\sum_{n\in \Z}L_{n,new}z^{-n-2}$.
\end{Lem}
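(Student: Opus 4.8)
The plan is to establish the three identities by working with Li's $\Delta$-operator as a formal power series intertwining the two module structures. The key tool is Li's theorem: if $M$ is a $V$-module on which $\xi_{(0)}$ acts semisimply, then $(M,Y^M(\Delta(z)\,\cdot\,,z))$ is again a $V$-module, now over the shifted conformal structure in the sense that the field attached to $\omega_{\xi}$ in the original module equals the field attached to $\omega$ in the twisted module. Concretely, one has the operator identity $Y^M(\Delta(z)a,z) = \sum_n (a^{new}_{(n)}) z^{-n-1}$ where $a^{new}_{(n)}$ are the modes computed after the shift, and in particular $L_{0,new}$ on $M$ corresponds to $L_{0,old}$ after conjugating by $\Delta(1)$ in a suitable sense. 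First I would record the precise form of this statement, including the compatibility $\on{wt}_{new}(\Delta(1)a) = \on{wt}_{old}(a) + (\text{shift})$ coming from the $z^{\xi_{(0)}/2}$ factor, so that the binomial coefficients $\binom{\on{wt}(a)}{i}$ and $\binom{\on{wt}(a)-1}{i}$ transform correctly.

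Next I would reduce each of the three identities to a single generating-function identity. The definitions of $\circ$, $*$ (left), and $*$ (right) in \eqref{eq:bimodule-structure-of-A(M)} can all be written uniformly: for a homogeneous $a$ of weight $\on{wt}(a)$ and any $m$, the element $a\circ m$ is $\Res_z\, \frac{(1+z)^{\on{wt}(a)}}{z^2} Y^M(a,z)m$, $a*m = \Res_z\,\frac{(1+z)^{\on{wt}(a)}}{z}Y^M(a,z)m$, and $m*a = \Res_z\,\frac{(1+z)^{\on{wt}(a)-1}}{z}Y^M(a,z)m$. So it suffices to show that applying $\Delta(1)$ and passing to the new field $Y^M(\Delta(z)a,z)$ converts the old residue expression into the new one. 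The crucial computation is the change of variables in the contour integral: Li's operator satisfies $\Delta(z_1)Y^M(a,z)\Delta(z_1)^{-1}$-type conjugation formulas, and more to the point there is the identity relating $Y^M(\Delta(z)a, z)$ evaluated near $z=0$ to $\Delta(1)$ composed with $Y^M(a,z)$ after a substitution $1+z \mapsto$ (something). I would extract from \cite{Li97} the precise statement that $\Res_z\, F(z)\,Y^M(\Delta(z)a,z)\,m$, for the relevant rational $F$, equals $\Delta(1)\cdot\Res_z\, \tilde F(z)\, Y^M(a,z)\, m$ with $\tilde F$ the correspondingly shifted rational function, matching the weight shift recorded above.

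The main obstacle will be handling the $\exp$ part of $\Delta(z)$ correctly: the factor $\exp(\sum_{n\geq 1}\frac{\xi_{(n)}}{-2n}(-z)^n)$ produces, when $a$ is not $L_0$-homogeneous after the action of $\xi_{(n)}$ with $n\geq 1$, a sum of terms of various weights, and one must verify that the binomial-coefficient bookkeeping survives. This is where the hypotheses $L_n\xi = \delta_{n,0}\xi$ and $\xi_{(n)}\xi = \kappa\delta_{n,1}\1$ enter decisively: they force $\xi_{(0)}$ to act on $V$ with integer eigenvalues and make $\xi_{(n)}$ for $n\geq 1$ act by operators lowering weight by $n$, so the triangularity needed for the weight accounting holds. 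I expect that once the operator identity from \cite{Li97} is stated in the residue form above, each of the three displayed equations follows by substituting the appropriate exponent ($\on{wt}(a)$ versus $\on{wt}(a)-1$) and reading off the residue; the semisimplicity of $\xi_{(0)}$ on $M$ is exactly what allows $\Delta(1)$ to be a well-defined linear operator on $M$ commuting past the grading decomposition. I would close by remarking that taking $M=V$ and $m=\1$ (or rather the adjoint module) in the first two identities recovers the algebra statement, and that right exactness plus these identities on generators gives Proposition \ref{Pro:change-of-grading-does-not-change-the-grading} for all $V$-modules with $\xi_{(0)}$ semisimple.
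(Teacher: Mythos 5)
Your proposal follows essentially the same route as the paper: rewrite $\circ_{\old}$, $*_{\old}$, and the right action as residues of $(1+z)^{\on{wt}(a)}$ or $(1+z)^{\on{wt}(a)-1}$ against $Y^M(a,z)$, then push $\Delta(1)$ through using Li's identity $\Delta(1)Y(a,z)=Y(\Delta(z+1)a,z)\Delta(1)$ from \cite{Li97}, and finally match the exponents via the weight-shift relation $\on{wt}_{new}=\on{wt}_{old}+\lam$ together with the fact that the $\exp$ part of $\Delta(z+1)$ contributes terms $u_n a$ of old weight $\on{wt}(a)-n$ and the same $\xi_{(0)}$-eigenvalue, so that $(z+1)^{\on{wt}(a)_{old}+\lam-n}=(z+1)^{\on{wt}_{new}(u_n a)}$. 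This is exactly the computation the paper carries out for $\circ$, with the other two cases treated by the same bookkeeping with the shifted exponent, so no gap.
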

\begin{proof}
 Let $m$ be a homogeneous vector of $M$
such that $\xi_{(0)}m=2 \lam m$.
Then
$\on{wt}(m)_{new}=\on{wt}(m)_{old}-\lam$,
where $\on{wt}(m)_{new}$
and  $\on{wt}(m)_{old}$ denote 
the eigenvalue of $L_{0,new}$ and $L_{0,old}$ on $m$,
respectively.
Write
\begin{align*}
\exp(\sum\limits_{n\geq 1}
\frac{\xi_{(n)}}{-2n}(-z)^{-n})
=\sum\limits_{n\geq 0} u_n z^{-n}
\end{align*} with $u_n\in \C[\xi_{(1)},\xi_{(2)},\dots,]$.
Since
we have
\begin{align}
 \Delta(1)Y(a,z)= Y(\Delta(z+1)a,z)\Delta(1)
\end{align}
for any $a\in V$
by \cite[Proposition 3.2]{Li97},
we have
\begin{align*}
 \Delta(1)(a\circ_{old} m)
=\Delta(1)\Res_{z=0} (Y(a,z)\frac{(z+1)^{\on{wt}(a)_{old}}}{z^2}m)
\\= \Res_z (\Delta(1)Y(a,z)\frac{(z+1)^{\on{wt}(a)_{old}}}{z^2}m)\\
=\Res_{z=0} (Y(\Delta(z+1)a,z)\frac{(z+1)^{\on{wt}(a)_{old}}}{z^2}\Delta(1)m)
\\=\sum_{n\geq 0}\Res_{z=0}(Y(u_n
 a,z)\frac{(z+1)^{\on{wt}(a)_{old}+\lam-n}}{z^2}\Delta(1)m)
\\=\sum_{n\geq 0}\Res_{z=0}(Y(u_n
 a,z)\frac{(z+1)^{\on{wt}(u_n a)_{new}}}{z^2}\Delta(1)m)
=(\Delta(1)a)\circ_{new} (\Delta(1)m).
\end{align*}
The proof of the other equalities is similar.
\end{proof}

\section{Affine vertex algebras}
\label{section:affineVA}
Let 
$\affg$ be 
the non-twisted affine Kac-Moody algebra
associated with $\fing$
and $(~|~)$:
\begin{align*}
\affg=\fing[t,t\inv]\+ \C K.
\end{align*}
The  commutation  relations  of $\affg$ are
 given by
\begin{align*}
 &[xt^m,yt^n]=[x,y]t^{m+n}+m\delta_{m+n,0}(x|y)K\quad \text{for }x,y\in \fing,
m,n\in\Z,
\\&[K,\affg]=0.
\end{align*}
We consider $\fing$
as a subalgebra of $\affg$
by the embedding $\fing\hookrightarrow \affg$,
$x\mapsto x t^0$.

For $k\in \C$
define 
\begin{align*}
 \Vg{k}=U(\affg)\*_{U(\fing[t]\+ \C K)}\C_k,
\end{align*}
where 
$\C_k$ is the one-dimensional representation of  $\fing[t]\+ \C K$ on which 
$\fing[t]$ acts trivially and $K$ acts as a multiplication by $k$.
There is a unique vertex algebra 
structure on $\Vg{k}$
such that
$\1:=1\* 1$ is the vacuum
and
\begin{align*}
Y(x t\inv \1,z)=x(z):=\sum_{n\in \Z}(x t^n)z^{-n-1}
\end{align*}
for $x\in \fing$.
The vertex algebra $V(\fing)$ is called the {\em universal affine vertex
algebra}
associated with $\fing$ at level $k.$

A $\Vg{k}$-module is the same as
a smooth $\affg$-module
of level $k$,
where by a smooth $\affg$-module $M$
we mean a $\affg$-module
$M$ such that
 $(xt^n)m=$  for  $n\gg 0$, $x\in \fing$,
$m\in M$.

We have  
\begin{align}
C_2(M)=
\fing[t\inv]t^{-2}M
\label{eq:c2-of-affine-module}
\end{align}
 for a  $\Vg{k}$-module $M$.
It follows that
the assignment $x\mapsto \overline{(xt\inv)\1}$,
$x\in \fing$, 
gives the isomorphism of Poisson algebras
\begin{align}
\C[\fing^*]\isomap  R_{\Vg{k}}=\Vg{k}/\fing[t\inv]t^{-2}\Vg{k}.
\label{eq:vareity-of-vgk}
\end{align}
We will identify $R_{\Vg{k}}$ with $\C[\fing^*]$ through the above isomorphism.
The 
Poisson 
module structure of 
$M/C_2(M)=M/\fing[t\inv]t^{-2}M$
over $\C[\fing^*]$
is  then given by
\begin{align*}
 x \cdot \bar m= \overline{(xt\inv) m},
\quad \{x, \bar m\}=\overline{(xt^0)m}
\end{align*}
for $x\in \fing$,
$m\in M$.

We will assume that $k$ is non-critical,
that is, $k\ne -h_{\fing}\che$,
unless otherwise stated,
although this condition is not essential.
The standard conformal vector
$\omega_{\fing}$ of $\Vg{k}$ is given by the  Sugawara   construction:
\begin{align*}
\omega_{\fing}=\frac{1}{2(k+h_{\fing}\che)}\sum_{i}(X_i t\inv)(X^i t\inv)\1,
\end{align*}
where
 $\{X_i\}$ is a basis of $\fing$,
$\{X^i\}$ the  dual bases with respect to $(~|~)$.
This gives a  $\Z_{\geq 0}$-grading on $\Vg{k}$.

We have \cite{FreZhu92}
the natural isomorphism
of algebras
\begin{align}
 \Zhu(\Vg{k})\cong U(\fing).
\end{align}
This  can be also seen using \eqref{eq:zhu-iso}
from the fact that
the current algebra of $\Vg{k}$ is isomorphic to
the standard degreewise completion \cite{MatNagTsu05}
$\widetilde{U_k(\affg)}$
of $U_k(\affg):=U(\affg)/(K-k\id)$.
For a $\fing$-module $E$,
we have
\begin{align}
 M_{\Vg{k}}(E)\cong U(\affg)\*_{U(\fing[t]\+ \C K )}E,
\label{eq:induced module}
\end{align}
where $E$ is considered as a $\fing[t]\+ \C K$-modules
on which $K$ acts as the multiplication by $k$
and $\fing[t]t$ acts trivially.

Let $N_k(\fing)$ be the unique maximal  ideal
of $\Vg{k}$.
Then 
\begin{align*}
 \Vs{k}:=\Vg{k}/N_k(\fing)
\end{align*}
is a simple vertex algebra called the {\em (simple) affine vertex
algebra}
associated with $\fing$
at level $k$.

Let
$\KL_k$ be the full subcategory of the category
of $\Vg{k}\on{-gMod}$
consisting of objects $M$
on which
 $\fing\subset \affg$ acts locally finitely.
By  (\ref{eq:induced module}),
$M_{\Vg{k}}(E)$ is an object of $\KL_k$
for
 a finite-dimensional $\fing$-module $E$.

The following assertion is clear.
\begin{Lem}
\begin{enumerate}
 \item The assignment $M\mapsto M/C_2(M)$ defines a right exact functor
from $\KL_k$ to $\overline{\HC}$.
\item The assignment $M\mapsto \Zhu(M)$ defines a right exact functor
from $\KL_k$ to ${\HC}$.
\end{enumerate}
\end{Lem}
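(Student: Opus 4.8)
The plan is to verify both statements by combining the explicit description of $C_2$ and $O(-)$ for modules over the affine vertex algebra $\Vg{k}$ (equations \eqref{eq:c2-of-affine-module}--\eqref{eq:vareity-of-vgk} and the discussion of Frenkel--Zhu's bimodules in Section \ref{section:Zhu-algberas}) with the already-recorded fact that $\Zhu(-)$ is right exact from $V\on{-Mod}$ to $\Zhu(V)\on{-biMod}$ (Lemma \ref{Lem:FreZhu}). So the only genuine content is that for $M\in\KL_k$ the target objects land in $\overline{\HC}$ and $\HC$ respectively; right exactness is then inherited.

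For (i): by \eqref{eq:c2-of-affine-module} we have $C_2(M)=\fing[t\inv]t^{-2}M$, and $M/C_2(M)$ carries the Poisson $\C[\fing^*]$-module structure spelled out after \eqref{eq:vareity-of-vgk}, where the Lie algebra action of $\fing$ is given by $x\cdot\bar m=\overline{(xt^0)m}$. First I would note that since $M\in\KL_k$, the element $x\in\fing=\fing t^0\subset\affg$ acts locally finitely on $M$ by definition of $\KL_k$; passing to the quotient $M/C_2(M)$ preserves local finiteness of the $\fing$-action. Hence $M/C_2(M)\in\overline{\HC}$. Right exactness of $M\mapsto M/C_2(M)$ from $\Vg{k}\on{-Mod}$ to $\C[\fing^*]\on{-PMod}$ is immediate (it is a cokernel-type construction), and it restricts to a functor $\KL_k\to\overline{\HC}$ by the previous sentence.

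For (ii): by Lemma \ref{Lem:FreZhu}, $M\mapsto\Zhu(M)$ is a right exact functor $\Vg{k}\on{-Mod}\to\Zhu(\Vg{k})\on{-biMod}=U(\fing)\on{-biMod}$. It remains to check that for $M\in\KL_k$ the bimodule $\Zhu(M)$ lies in $\HC$, i.e. the adjoint $\fing$-action on $\Zhu(M)$ is locally finite. From \eqref{eq:adjoing-action-on-Zhu-algebra-on-bimodule}, for $a=(xt\inv)\1$ with $x\in\fing$ (so $\on{wt}(a)=1$) the adjoint action $a*m-m*a$ reduces to $a_{(0)}m=\overline{(xt^0)m}$; thus the adjoint $\fing$-action on $\Zhu(M)$ is induced from the action of $\fing=\fing t^0$ on $M$, which is locally finite because $M\in\KL_k$. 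Since the images of the $(xt\inv)\1$ generate $U(\fing)=\Zhu(\Vg{k})$ and the adjoint action is by derivations, local finiteness on generators propagates, so $\Zhu(M)\in\HC$. Combining with Lemma \ref{Lem:FreZhu} gives the right exact functor $\KL_k\to\HC$.

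I do not expect a real obstacle here; the statement is labelled ``clear'' in the excerpt and the argument is bookkeeping. The one point requiring slight care is the propagation of local finiteness of the adjoint action from the generators $\overline{(xt\inv)\1}$ to all of $\Zhu(\Vg{k})=U(\fing)$ in part (ii): one should invoke that a derivation acting locally finitely on a generating set of an algebra acts locally finitely on the whole algebra (the sum and product of ad-locally-finite vectors is ad-locally-finite), and correspondingly on any bimodule generated over it — which is exactly the definition of $\HC$.
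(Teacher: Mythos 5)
Your proposal is correct and supplies exactly the bookkeeping the paper suppresses (the paper merely labels this Lemma ``clear'' and offers no proof). The only cosmetic over-complication is in part (ii): the category $\HC$ requires local finiteness of the adjoint action of the Lie algebra $\fing$ only, so once you observe via \eqref{eq:adjoing-action-on-Zhu-algebra-on-bimodule} that $\ad((xt^{-1})\1)$ on $\Zhu(M)$ is induced by the $xt^0$-action on $M$, local finiteness is inherited directly from $M\in\KL_k$ as a quotient, and the ``propagation from generators'' remark you append at the end is not needed.
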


Let $\KL^{\Delta}_k$
be the full subcategory of $\KL$
consisting of modules 
which admit a finite filtration
$0=M_0\subset M_1\subset \dots M_r=M$
such that
$M_i/M_{i+1}\cong M_{\Vg{k}}(E)$ for some finite-dimensional
representation $E_i$ 
for each $i$.
Note that
 the adjoint module  $\Vg{k}$ is an object of $ \KL_k^{\Delta}$
and  that
$M\in \KL_k$
belongs to
 $\KL^{\Delta}_k$ if and only if
it is a
 free $U(\fing[t\inv]t\inv)$-module of finite rank.

 \begin{Lem}\label{Lem:Zhu's-bimodules-of-objects-of-KL}
\begin{enumerate}
\item Let $M$ be 
  an object of $\KL^{\Delta}_k$.
Then 
$\pi_M: M/C_2(M)\ra \gr_F\Zhu(M)$  
is an isomorphism.
\item Let $0\ra M_1\ra M_2\ra M_3\ra 0$ be an exact sequence
in $\KL^{\Delta}_k$.
Then the induced sequence
$0\ra \Zhu(M_1)\ra \Zhu(M_2)\ra \Zhu(M_3)\ra 0$ is exact as well.
\item Let  $M$  be a finitely generated object of $\KL_k$.
Then $\Zhu(M)$ is finitely generated as a left (or a right) $U(\fing)$-module.
\end{enumerate}
 \end{Lem}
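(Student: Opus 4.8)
The plan is to prove the three assertions in order, using the induced/standard modules $M_{\Vg{k}}(E)$ as the base case and reducing everything else to it. For (i), the key observation is that the surjection $\pi_M\colon M/C_2(M)\twoheadrightarrow \gr_F\Zhu(M)$ from Lemma \ref{Lem:surjective-hom-ZHu's-bimodule} is always surjective, so it suffices to prove injectivity, equivalently that $\dim (M/C_2(M))_p = \dim \gr_F\Zhu(M)_p$ in each degree $p$. When $M = M_{\Vg{k}}(E)$ with $E$ finite-dimensional, formula \eqref{eq:induced module} identifies $M$ with $U(\fing[t\inv]t\inv)\otimes E$ as a graded vector space, so by \eqref{eq:c2-of-affine-module} one computes $M/C_2(M) \cong \C[\fing^*]\otimes E$ with the obvious grading; on the other hand, by \eqref{eq:induced module} and the identification $\Zhu(\Vg{k})\cong U(\fing)$, Frenkel--Zhu's bimodule $\Zhu(M_{\Vg{k}}(E))$ is the "induced" bimodule $U(\fing)\otimes E$ (this is the content of \cite{FreZhu92}; alternatively it follows from the current-algebra description \eqref{eq:zhu-iso}), whose associated graded with respect to $F_{\bullet}$ is $\gr U(\fing)\otimes E = \C[\fing^*]\otimes E$. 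Matching the two computations degree by degree gives that $\pi_M$ is an isomorphism for $M = M_{\Vg{k}}(E)$. For a general $M\in\KL_k^{\Delta}$ one induces on the length of the filtration $0=M_0\subset\dots\subset M_r=M$ with successive quotients $M_{\Vg{k}}(E_i)$, using the five lemma together with the right-exactness of both $M\mapsto M/C_2(M)$ and $M\mapsto\Zhu(M)$ and the fact that $\pi$ is a natural transformation; the point is that at each stage the sequence $0\to M_{i}/C_2(M_i)\to\dots$ is forced to be exact on the left because the corresponding sequence of associated graded spaces is (by the base case, these have the expected dimensions, leaving no room for a kernel). This simultaneously proves (ii): the sequence $0\to\Zhu(M_1)\to\Zhu(M_2)\to\Zhu(M_3)\to 0$ is right-exact by Lemma \ref{Lem:FreZhu}, and left-exactness follows by passing to $\gr_F$, where by (i) it becomes the manifestly exact sequence $0\to M_1/C_2(M_1)\to M_2/C_2(M_2)\to M_3/C_2(M_3)\to 0$ (exactness of $M\mapsto M/C_2(M)$ on $\KL_k^{\Delta}$, again reduced to the base case $M_{\Vg{k}}(E)$ where $M/C_2(M)$ is a free module and one checks exactness directly).

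For (iii), let $M$ be a finitely generated object of $\KL_k$. Since $M$ is finitely generated over $U(\affg)$ and lies in $\KL_k$, a standard argument produces a surjection $M_{\Vg{k}}(E)\twoheadrightarrow M$ from the standard module on a finite-dimensional $\fing$-module $E$ (generated by a finite-dimensional $\fing$-stable subspace containing generators). Applying the right-exact functor $\Zhu$ gives a surjection $\Zhu(M_{\Vg{k}}(E)) = U(\fing)\otimes E \twoheadrightarrow \Zhu(M)$ of $U(\fing)$-bimodules; as a left (resp. right) $U(\fing)$-module the source $U(\fing)\otimes E$ is free of rank $\dim E<\infty$, hence $\Zhu(M)$ is finitely generated as a left (resp. right) $U(\fing)$-module.

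The main obstacle I expect is the base-case identification $\Zhu(M_{\Vg{k}}(E))\cong U(\fing)\otimes E$ as a graded bimodule with $\gr_F$ equal to $\C[\fing^*]\otimes E$: one must be careful that the PBW-type filtration $F_{\bullet}$ coming from the $L_0$-grading on $M_{\Vg{k}}(E)$ matches up correctly with the standard filtration on $U(\fing)$ and that the $O(M)$-relations defining Frenkel--Zhu's bimodule do not collapse more than the analogue of the PBW relations; this is exactly where the hypothesis that $M$ be \emph{free} over $U(\fing[t\inv]t\inv)$ (i.e. that $M\in\KL_k^{\Delta}$) is used, since it guarantees the dimension count in each graded piece is tight. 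Everything else — the dévissage via the five lemma, the reduction of (iii) to a free-module surjection — is routine once the base case is in hand.
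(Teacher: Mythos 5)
Your argument for part (i) has a genuine gap, and it is the decisive one. To prove that $\pi_{M_{i+1}}$ is an isomorphism by dévissage along the filtration $0 = M_0 \subset \cdots \subset M_r = M$, you set up a commutative diagram whose top row is the $M/C_2$-sequence and whose bottom row is the $\gr_F\Zhu$-sequence, and you invoke the five lemma. But the five lemma requires both rows to be exact. The top row is indeed short exact (freeness over $U(\fing[t\inv]t\inv)$ kills the relevant $\on{Tor}_1$, or equivalently one can see this directly via PBW). The bottom row, however, is only known to be right-exact from Lemma \ref{Lem:FreZhu}; its left-exactness \emph{is} assertion (ii) of the Lemma, which you then go on to deduce \emph{from} (i). This is circular. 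Dimension counting does not break the circle either: from surjectivity of $\pi_{M_{i+1}}$, right-exactness of the bottom row, exactness of the top row, and the inductive hypothesis, one obtains only $\dim\ker\pi_{M_{i+1}}=\dim\ker\bigl(\gr_F\Zhu(M_i)\to\gr_F\Zhu(M_{i+1})\bigr)$ in each degree — which is perfectly consistent with both kernels being nonzero. So the argument as written does not prove (i).

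The paper avoids this by proving (i) directly, without any dévissage or appeal to the base case $M_{\Vg{k}}(E)$. One filters $O(M)$ by $F_p O(M) = O(M)\cap M_{\leq p}$ and observes that the kernel of $\pi_M$ in degree $p$ is the image of $\gr_F O(M)$ in $M_p/C_2(M)_p$. Since the top-degree component of each generator $a\circ m$ of $O(M)$ is $a_{(-2)}m$, and freeness over $U(\fing[t\inv]t\inv)$ guarantees $a_{(-2)}m\ne 0$ whenever $a\ne 0$ and $m\ne 0$, one gets $\gr_F O(M)=C_2(M)$, hence $\pi_M$ is injective (it is always surjective). Then (ii) follows from (i) together with exactness of $M\mapsto M/C_2(M)$ on $\KL_k^{\Delta}$, and (iii) by reducing to objects of $\KL_k^{\Delta}$, then to $M_{\Vg{k}}(E)$, and citing Frenkel--Zhu. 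Your treatment of (ii) and (iii) is essentially the same as the paper's once (i) is in place; the problem is that your (i) is not independently established.

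If you want to keep the inductive flavor, the way to repair the argument is to prove (i) for a single standard module $M_{\Vg{k}}(E)$ exactly as you did, but then replace the five-lemma step by the paper's direct symbol computation for a general object of $\KL_k^{\Delta}$ (which requires only freeness and nothing about the filtration $0=M_0\subset\cdots\subset M_r=M$), and only afterwards deduce (ii). Alternatively one could prove (i) and (ii) by a genuinely simultaneous induction on the length $r$ of the filtration, but then at each step one has to show directly that $\Zhu(M_i)\to\Zhu(M_{i+1})$ is injective, which is not free; the paper's computation of $\gr_F O(M)$ is precisely the tool that closes that gap, and your write-up does not contain a substitute for it.
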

 \begin{proof}
(i)
Let $F_{\bullet}O(M)$ be the 
filtration
of $O(M)$ induced 
by the filtration 
$\{M_{\leq p}\}$ of $M$,
$\gr_F O(M)=\bigoplus_{p}F_p O(M)/F_{p-1/2}O(M)$.
The freeness of $M$ over $U(\fing[t\inv]t\inv)$
implies that 
$a_{(-2)}m\ne 0$  for  any nonzero elements $a\in \Vg{k}$,
$m\in M$.
Hence 
$\gr_F O(M)=C_2(M)\subset M=\gr_F M$ and 
the assertion follows.
(ii)
It is sufficient to show that
the
induced sequence
\begin{align}\label{eq:exact-of-Zhu-bimod}
0\ra \gr_F\Zhu(M_1 )\ra \gr_F \Zhu(M_2 )\ra \gr_F \Zhu(M_3 )\ra 0
\end{align}
is exact.
Since
$0\ra M_1\ra M_2\ra M_3\ra 0$ is  an exact sequence
of free $U(\fing[t\inv]t\inv)$-modules 
it induces an exact sequence
\begin{align*}
0\ra M_1/C_2(M_1)\ra M_2/C_2(M_2)\ra  M_3/C_2(M_3)\ra 0
\end{align*}
by  \eqref{eq:c2-of-affine-module}.
By (i),
this prove the exactness of
(\ref{eq:exact-of-Zhu-bimod}).
(iii)
Since it
is finitely generated, 
$M$
is a quotient of an object of $\KL^{\Delta}$.
By the right exactness of the functor $\Zhu(?)$ it is enough to show
the assertion for
objects of $\KL^{\Delta}$.
By (ii) 
it then  suffices to show the assertion for the modules of the form
$M=M_{\Vg{k}}(E)$.
  But this follows from \cite[Theorem 3.2.1]{FreZhu92}.
 \end{proof}

Let $\{e,f,h\}$ be the $\mf{sl}_2$-triple  defined in Section
\ref{sectopn:classicalBRST}.
In the definition of $W$-algebras $\Wg{k}$ below
we shift the conformal vector $\omega_\fing$ of $\Vg{k}$ to the
conformal vector
\begin{align}
 \omega_{\fing,h}
=\omega_{\fing}+\frac{1}{2}(ht^{-2})\1
\label{eq:shifted-cv}
\end{align}
to give a 
 well-defined conformal vector 
of $\Wg{k}$.
We will identify
Frenkel-Zhu's bimodules 
of $M\in \KL_k$ with respect to 
$ \omega_{\fing,h}$
with 
Frenkel-Zhu's bimodules 
with respect to 
$ \omega_{\fing}$
through Proposition 
\ref{Pro:change-of-grading-does-not-change-the-grading}
and denote both of them  by $\Zhu(M)$.

\section{$W$-algebras and Poisson modules over Slodowy slices}
\label{section:W-algebras}
For a $\Vg{k}$-module $M$,
let
$(C^{\ch}(M),Q_{(0)})$
 be the BRST complex 
of the (generalized) {\em quantized Drinfeld-Sokolov reduction}
associated with $(\fing,f)$ 
defined in \cite{FF90, KacRoaWak03}.
We have
\begin{align*}
 C^{\ch}(M)=M\* \D^{\ch}\* \Lamsemi{\bullet},
\end{align*}
where
$\D^{\ch}$ is the $\beta\gamma$-system of rank $\frac{1}{2}
\dim \fing_{1/2}$,
$\Lamsemi{\bullet}$
 is the space of semi-infinite forms associated with
$\fing_{>0}\+\fing_{>0}^*$.
The vertex algebra
$\D^{\ch}$ is freely generated by the fields
$\phi_i(z)$ with $i=1,\dots, \dim \fing_{1/2}$ (corresponding to the
basis $\{x_i\}$ of $\fing_{1/2}$)
satisfying the OPE's
\begin{align*}
 \phi_i(z)\phi_j(w)\sim \frac{\chi([x_i,x_j])}{z-w}.
\end{align*}
The space $\Lamsemi{\bullet}$
of semi-infinite forms
is a vertex superalgebra freely generated by the odd fields
$\psi_1(z),\dots, \psi_{\dim \fing_{>0}}(z)$
(corresponding to the
basis $\{x_i\}$ of $\fing_{>0}$)
and 
$\psi_1^*(z),\dots, \psi_{\dim \fing_{>0}}^*(z)$
(corresponding to the
dual basis $\{x_i^*\}$ of $\fing_{>0}^*$)
satisfying the OPE's
\begin{align*}
 \psi_i(z)\psi_j^*(w)\sim \frac{\delta_{ij}}{z-w},\quad
 \psi_i(z)\psi_j(w)\sim 
 \psi_i^*(z)\psi_j^*(w)\sim 0.
\end{align*}
The differential $Q_{(0)}$ is the zero-mode of the fields
\begin{align*}
 &Q(z)=\sum_{n\in \Z}Q_{(n)}z^{-n-1}\\
&:=\sum_{i=1}^{\dim \fing_{>0}}
(x_i(z)+\phi_i(z))\psi_i^*(z)-\frac{1}{2}\sum_{1\leq i,j,k\leq \dim
 \fing_{>0}}
c_{ij}^k \psi_i^*(z)\psi_j^*(z)\psi_k(w).
\end{align*}
Here we have omitted the tensor product symbol
and have put
$\phi_i(z)=\chi(x_i)$
for $i>\dim \fing_{1/2}$.
(Note that
in the formula of  $Q(z)$ above  there is no need to take the normal ordering
because of the existence of the structure constant $c_{ij}^k$.)

By abuse of notation we denote also by
$\BRS{0}{M}$ the cohomology of the complex 
$(C^{\ch}(M),Q_{(0)})$.

The {\em $W$-algebra associated with $(\fing,f)$} at level $k$
is by definition
\begin{align}
 \W^k(\fing,f)=H^0_f(\Vg{k}).
\end{align}
The space $\W^k(\fing,f)$ inherits the vertex algebra
structure from $C^{\ch}(\Vg{k})$.
The vertex algebra
$\Wg{k}$ is conformal with
the conformal
vector $\omega_{\W}$
defined by
\begin{align*}
 \omega_{\W}=\omega_{\fing,h}+\omega_{\D}+\omega_{\Lamsemi{\bullet}},
\end{align*}
where 
\begin{align*}
& Y(\omega_{\D},z)=\frac{1}{2}\sum_{i=1}^{\dim
 \fing_{1/2}}:\partial_z\phi^i(z)\phi_i(z),\\
&Y(\omega_{\Lamsemi{\bullet}},z)
=-\sum_{i=1}^{\dim \fing_{>0}}m_i :\psi_i^*(z)\partial_z \psi_i(z):
+\sum_{i=1}^{\dim \fing_{>0}}m_i :\partial_z \psi_i^*(z)\psi_i(z):.
\end{align*}
Here
$\phi^i(z)$ is the field of $\D$ corresponding to the vector $x^i\in
\fing_{1/2}$
such that $\chi([x^i,x_j])=\delta_{ij}$,
 $m_i=j$ if $x_i\in \fing_{j}$,
and we have used the state-field correspondence.
Here the conformal vector 
$\omega_\fing$ of $\Vg{k}$ has been shifted to $\omega_{\fing,h}$
so that
$Q_{(0)}\omega_\W=0$.

By definition 
the assignment 
$M\mapsto \BRS{0}{M}$ defines a functor from
$\Vg{k}\on{-Mod}
$ to $\Wg{k}\on{-Mod}$.

\smallskip

For a $\Vg{k}$-module $M$, consider Zhu's $C_2$-module
$C^{\ch}(M)/C_2 C^{\ch}(M)
$  over the Poisson superalgebra
$R_{C^{\ch}(\Vg{k})}$.
Since we have
$Q_{(0)}C_2 C^{\ch}(M)\subset C_2 C^{\ch}(M)$,
$C^{\ch}(M)/C_2 C^{\ch}(M)$
is a quotient complex,
which
is by definition isomorphic to the 
complex
$(\bar C(M/C_2(M)),\ad \bar d)$  studied in Section \ref{sectopn:classicalBRST}.
We have  the obvious map
\begin{align*}
\bar \eta_M: \BRS{0}{M}/C_2\BRS{0}{M}\ra 
\BRS{0}{M/C_2(M)}.
\end{align*}
For the adjoint  module $M=\Vg{k}$,
$\bar\eta_{\Vg{k}}$ gives the isomorphism
\begin{align*}
\eta_{\Vg{k}}:  R_{\Wg{k}}\isomap  \C[\mc{S}_f]
\end{align*}
(\cite{De-Kac06}).
It follows that
$\bar\eta_{M}$ is a homomorphism of Poisson modules over
$\C[\mc{S}_f]$.

 \begin{Th}[\cite{Ara09b}]
\label{Th:Arakawa}
$ $

\begin{enumerate}
\item  We have $\BRS{i}{M}=0$ for $i\ne 0$,
$M\in \KL_k$.
In particular the functor $\KL_k\ra \Wg{k}\on{-Mod}$,
$M\mapsto \BRS{0}{M}$, is exact.
\item
For $M\in \KL_k$,
$\bar \eta_M$ gives the isomorphism
\begin{align*}
H^0_f(M)/C_2(H_f^0(M))\cong H^0_f(M/C_2(M))
\end{align*}
of Poisson modules over $\C[\mc{S}_f]$.

\end{enumerate}
 \end{Th}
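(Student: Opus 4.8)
The plan is to mimic, in the vertex-algebra setting, the proofs of Theorem~\ref{Th:vanishing-Poisson} and Theorem~\ref{Th:vanishing-finite-dimensional}: transport the Kazhdan-filtration-and-spectral-sequence argument to the BRST complex $(C^{\ch}(M),Q_{(0)})$, where $C^{\ch}(M)=M\*\D^{\ch}\*\Lamsemi{\bullet}$. For (i) I would first reduce to $M$ finitely generated over $\Vg{k}$, using that the cohomology functor commutes with injective limits and that every object of $\KL_k$ is the union of its finitely generated submodules; by local finiteness of the $\fing$-action such submodules have conformal weights bounded below and finite-dimensional weight spaces.

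For such $M$, the core step is to endow $C^{\ch}(M)$ with an increasing, exhaustive, separated filtration $\{K_\bullet C^{\ch}(M)\}$ obtained by combining the conformal grading with the internal $\ad h$-grading of $\fing$ --- the semi-infinite counterpart of the Kazhdan filtration of Section~\ref{section:finite W-algebras}. The differential $Q_{(0)}$ preserves it, and with respect to the resulting charge bigrading $Q_{(0)}$ splits as $Q_{(0)}=d_-+d_+$, where $d_-$ is the ``linear/Koszul'' part --- built from the constant terms $\chi(x_i)$, $x_i\in\fing_{\geq 1}$, of the fields $\phi_i(z)$, from the free action of $\fing_{\geq 1}[t,t\inv]$ on $M$, and (in the $\fing_{1/2}$-directions) from the $\beta\gamma$-system $\D^{\ch}$ --- and $d_+$ is the remainder. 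Using $C_2(M)=\fing[t\inv]t^{-2}M$ and smoothness of $M$, I would show, exactly as in the proof of Theorem~\ref{Th:vanishing-finite-dimensional}, that the $d_-$-cohomology is a Koszul/Weyl-algebra computation concentrated in a single ghost degree, and that the induced $d_+$-differential on it is the semi-infinite Chevalley differential of $\fing_{>0}[t,t\inv]$ with coefficients in a smooth module built from $M/C_2(M)$. The corresponding $E_2$-page is then $\Hsemi{\bullet}$ of $\fing_{>0}[t,t\inv]$ on that module, which by the Feigin--Frenkel vanishing theorem \cite{FF90} (cf.\ \cite{Ara07}) is concentrated in degree $0$; hence the spectral sequence collapses, giving $H^i_f(M)=0$ for $i\neq 0$. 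Exactness of $M\mapsto H^0_f(M)$ on $\KL_k$ then follows, since $C^{\ch}(-)$ is exact and its cohomology is concentrated in degree $0$. This proves (i).

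For (ii) I would exploit that $C_2C^{\ch}(M)$ is a subcomplex of $C^{\ch}(M)$ with quotient complex $(\bar C(M/C_2(M)),\ad\bar d)$, together with Theorem~\ref{Th:vanishing-Poisson}, which (since $M/C_2(M)\in\overline{\HC}$) yields $H^i_f(M/C_2(M))=0$ for $i\neq 0$. Feeding this and part (i) into the long exact cohomology sequence of
\[
0\longrightarrow C_2C^{\ch}(M)\longrightarrow C^{\ch}(M)\longrightarrow \bar C(M/C_2(M))\longrightarrow 0
\]
forces $H^i_f(C_2C^{\ch}(M))=0$ for $i\neq 0,1$ and leaves the four-term exact sequence
\[
0\to H^0_f(C_2C^{\ch}(M))\to H^0_f(M)\to H^0_f(M/C_2(M))\to H^1_f(C_2C^{\ch}(M))\to 0 .
\]
It then remains to prove $H^1_f(C_2C^{\ch}(M))=0$ and that the image of $H^0_f(C_2C^{\ch}(M))$ in $H^0_f(M)$ is exactly $C_2(H^0_f(M))$; I would get both by rerunning the core-step filtration argument on the subcomplex $C_2C^{\ch}(M)$, or equivalently by filtering $C^{\ch}(M)$ by the decreasing $C_2$-filtration $C^{\ch}(M)=G^0\supset G^1=C_2C^{\ch}(M)\supset\dots$ (Li's standard filtration) and using the $E_1$-degeneration of the core-step spectral sequence rather than merely the vanishing $H^{\neq0}_f(M)=0$. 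With these two points, $\bar\eta_M$ is identified with the induced isomorphism $H^0_f(M)/C_2(H^0_f(M))\isomap H^0_f(M/C_2(M))$, and its $\C[\mc{S}_f]$-linearity is automatic from the construction of $\bar\eta_M$ and of $\eta_{\Vg{k}}\colon R_{\Wg{k}}\isomap\C[\mc{S}_f]$ (\cite{De-Kac06}).

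The hard part will be entirely in the core step: making the semi-infinite Kazhdan filtration honest, i.e.\ verifying that it is separated on $C^{\ch}(M)$ and that the associated spectral sequence converges --- this is precisely where boundedness below of the $L_0$-grading on objects of $\KL_k$ is used --- and checking that the $d_-$-cohomology really is concentrated in one ghost degree despite $\fing_{\geq 1}[t,t\inv]$ being infinite-dimensional, for which one needs smoothness of $M$ to see that the pertinent Koszul complex is a resolution and that the $\beta\gamma$-system $\D^{\ch}$ contributes exactly as the finite-dimensional Weyl algebra did in Section~\ref{sectopn:classicalBRST} and Section~\ref{section:finite W-algebras}. Once the reduction to $\Hsemi{\bullet}$ of $\fing_{>0}[t,t\inv]$ is in place the vanishing is just Feigin--Frenkel's theorem, and part (ii) is a formal consequence of part (i) and Theorem~\ref{Th:vanishing-Poisson}, in exact parallel with the way Theorem~\ref{Th:vanishing-finite-dimensional}(i) was deduced from Theorem~\ref{Th:vanishing-Poisson}.
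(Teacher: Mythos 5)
The paper does not prove Theorem~\ref{Th:Arakawa}; it is imported verbatim from \cite{Ara09b}, so there is no in-text argument to compare your proposal against. The framework you suggest --- a Kazhdan-type filtration on $C^{\ch}(M)$, a $d_-+d_+$ splitting of $Q_{(0)}$, a spectral sequence comparing with the classical reduction --- is plausible and mirrors the pattern of Theorems~\ref{Th:vanishing-Poisson} and~\ref{Th:vanishing-finite-dimensional}, but the pivotal step in part (i) is essentially circular. You reduce to the vanishing of a semi-infinite cohomology $\Hsemi{\bullet}(\fing_{>0}[t,t\inv],-)$ and then quote the ``Feigin--Frenkel vanishing theorem''. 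The cited \cite{FF90} treats $f=f_{prin}$ and $M=\Vg{k}$; the general case --- arbitrary nilpotent $f$, with the half-integral grading on $\fing$, the $\beta\gamma$-system $\D^{\ch}$, and the Kac--Roan--Wakimoto differential, together with arbitrary $M\in\KL_k$ --- is precisely the content of the theorem and is the substance of \cite{Ara09b}; it is not a formal consequence of \cite{FF90}. There is also a technical mismatch: the associated graded of any Kazhdan/Li-type filtration on the chiral complex is governed by the arc-space algebra $S(\fing[t\inv]t\inv)$, not by $\C[\fing^*]$, so your description of the $E_1$-page as a finite Koszul complex over $\C[\fing^*]$ whose cohomology is ``built from $M/C_2(M)$'' is not obviously consistent with calling the $E_2$-page a semi-infinite cohomology of the loop algebra; the Gan--Ginzburg collapse that closes the argument in Theorems~\ref{Th:vanishing-Poisson} and~\ref{Th:vanishing-finite-dimensional} does not transfer to the loop setting without a separate argument.

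For part (ii) your route via the Li filtration and the long exact sequence attached to $0\to C_2 C^{\ch}(M)\to C^{\ch}(M)\to\bar C(M/C_2(M))\to 0$ is reasonable, but it presupposes (i) together with exactly the degeneration facts you yourself flag as open (vanishing of $\BRS{1}{C_2 C^{\ch}(M)}$ and identifying the image of $\BRS{0}{C_2 C^{\ch}(M)}$ in $H^0_f(M)$ with $C_2(H^0_f(M))$); until those are proved, the four-term exact sequence does not yield the isomorphism $\bar\eta_M$. In short, the expected skeleton is present, but the core vanishing is assumed at the step where the work actually lies, and that is what \cite{Ara09b} supplies.
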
 

Let $N$ be an ideal of $\Vg{k}$.
By Theorem \ref{Th:Arakawa} (i)
$\BRS{0}{N}$ embeds into  $\Wg{k}$,
and
we have the isomorphism
\begin{align}
 \BRS{0}{\Vg{k}/N}\cong \Wg{k}/\BRS{0}{N}
\label{eq:general-quotient}
\end{align}
of vertex algebras.
In particular,
\begin{align*}
 \BRS{0}{\Vs{k}}\cong \Wg{k}/\BRS{0}{N_k(\fing)}.
\end{align*}

\section{Quantized Drinfeld-Sokolov reduction and Frenkel-Zhu's bimodules associated with  $W$-algebras}
\label{section:Zhu'ss-bimodules-for-W}
For  a $\Vg{k}$-module $M$,
consider the
bimodule
$\Zhu(C^{ch}(M))$
over $\Zhu(C^{\ch}(\Vg{k}))$.
Since we have $Q_{(0)}O(C^{ch}(M))
\subset O(C^{ch}(M))$,
$(\Zhu(C^{ch}(M)), Q_{(0)})$
is a quotient complex,
which 
is isomorphic to the 
complex
$(C(\Zhu(M)),\ad  d)$  studied in Section \ref{sectopn:classicalBRST},
where,
throughout this section,  $\Zhu(M)$ denotes Frenkel-Zhu's bimodule
associated with $M$ with respect to the conformal vector \eqref{eq:shifted-cv}.
Consider 
the map
\begin{align*}
\begin{array}{cccc}
\eta_M:&\Zhu(\BRS{0}{M})&\ra&  H^0_f(\Zhu(M)),\\
 & [c]+O(H^0_f(M))& \mapsto & [c+O(C(M))].
\end{array}
\end{align*}
For the adjoint module $M=\Vg{k}$,
$\eta_{\Vg{k}}$  gives the isomorphism
\begin{align}
\Zhu(\Wg{k})\isomap
 U(\fing,f)
\label{eq:iso-of-Zhuo-of-universals}
\end{align}
of algebras (\cite{Ara07,De-Kac06}, or see  Proposition
\ref{Pro:surjection} (ii) below).
It follows that $\eta_M$ is a homomorphism of $U(\fing,f)$-bimodules.

We can now state
the main result of 
 this section:
\begin{Th}\label{Th:Zhu-algebra-correspondence}
For any object $M$ of $\KL_k$,
$\eta_M$ gives the isomorphism
\begin{align*}
 \Zhu (\BRS{0}{M}))\cong H^0_f(\Zhu(M))
\end{align*}
of $U(\fing,f)$-bimodules.
\end{Th}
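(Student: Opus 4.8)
The plan is to run the same kind of filtration/spectral-sequence argument that was used to prove Theorem~\ref{Th:Arakawa} and Theorem~\ref{Th:vanishing-finite-dimensional}, now at the level of Zhu's algebra and Frenkel-Zhu's bimodules. First I would reduce to the case of a finitely generated $M\in\KL_k$, since $\Zhu(?)$, $C_2(?)$, and $H^0_f(?)$ all commute with direct limits; moreover, by the right exactness of $\Zhu(?)$ (Lemma~\ref{Lem:FreZhu}) and of $H^0_f(?)$ on $\KL_k$ (Theorem~\ref{Th:Arakawa}(i)), together with Lemma~\ref{Lem:Zhu's-bimodules-of-objects-of-KL}, it suffices to prove the statement for $M$ in $\KL_k^{\Delta}$, and ultimately for the generalized Verma modules $M=M_{\Vg{k}}(E)$ with $E$ finite dimensional.

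Next I would equip both sides of $\eta_M$ with compatible filtrations. On $\Wg{k}$-modules one has the filtration $F_\bullet \Zhu(H^0_f(M))$ coming from the $L_0$-grading, and by the general surjection $\pi$ (Lemma~\ref{Lem:surjective-hom-ZHu's-bimodule}) together with Theorem~\ref{Th:Arakawa}(ii), $\gr_F\Zhu(H^0_f(M))$ is a quotient of $H^0_f(M)/C_2(H^0_f(M))\cong H^0_f(M/C_2(M))$. On the other side, $H^0_f(\Zhu(M))$ carries the Kazhdan filtration inherited from a good Kazhdan filtration $K_\bullet\Zhu(M)$ (which exists because $\Zhu(M)$ is finitely generated over $U(\fing)$ by Lemma~\ref{Lem:Zhu's-bimodules-of-objects-of-KL}(iii) and lies in $\HC$), and by Theorem~\ref{Th:vanishing-finite-dimensional}(i), $\gr_K H^0_f(\Zhu(M))\cong H^0_f(\gr_K\Zhu(M))$. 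The key compatibility to check is that $\eta_M$ is a filtered map whose associated graded fits into the commutative square relating $\bar\eta_{M}$ of Theorem~\ref{Th:Arakawa}(ii) with the identification $\gr_K\Zhu(M)\cong M/C_2(M)$ (the latter being $\pi_M$ of Lemma~\ref{Lem:Zhu's-bimodules-of-objects-of-KL}(i), which is an isomorphism on $\KL_k^{\Delta}$). Concretely, one wants $\gr\eta_M$ to be identified, via these isomorphisms, with $\bar\eta_{M/C_2(M)}\colon H^0_f(M)/C_2(H^0_f(M))\to H^0_f(M/C_2(M))$, which is an isomorphism for $M\in\KL_k$ by Theorem~\ref{Th:Arakawa}(ii).

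Granting that identification, the argument closes: $\gr\eta_M$ is an isomorphism of graded Poisson modules, hence (the filtrations being exhaustive and separated, with the separatedness on the $W$-algebra side coming from positive-gradedness, and convergence of the Kazhdan filtration as in \cite[Lemma~4.3.3]{Gin08}) $\eta_M$ itself is an isomorphism of $U(\fing,f)$-bimodules. For $M=\Vg{k}$ this recovers \eqref{eq:iso-of-Zhuo-of-universals}. I would also record, as in the proof of Theorem~\ref{Th:vanishing-finite-dimensional}, the vanishing $H^i_f(\Zhu(M))=0$ for $i\neq 0$ (immediate from Theorem~\ref{Th:vanishing-finite-dimensional}(ii) since $\Zhu(M)\in\HC$) and the analogous vanishing $H^i_f(\Zhu(C^{ch}(M)))=0$ needed to guarantee that passing to cohomology commutes with taking associated graded of the double complex $(\Zhu(C^{ch}(M)),Q_{(0)})$; this is where one needs the careful bookkeeping of the Kazhdan filtration on $C(\Zhu(M))$.

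The main obstacle I expect is precisely the verification that $\eta_M$ is strictly filtered and that $\gr\eta_M$ is the \emph{map} $\bar\eta_{M/C_2(M)}$ rather than merely an abstract isomorphism — i.e. chasing the explicit cocycle-level formulas for $Q_{(0)}$, $O(\,\cdot\,)$, $C_2(\,\cdot\,)$ through the identifications $\Zhu(C^{ch}(M))\cong C(\Zhu(M))$ and $C^{ch}(M)/C_2C^{ch}(M)\cong \bar C(M/C_2(M))$, and confirming that the shift of conformal vector \eqref{eq:shifted-cv} (handled via Proposition~\ref{Pro:change-of-grading-does-not-change-the-grading}) does not disturb this. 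Once the bimodule and Poisson-module structures are tracked correctly, the homological input is entirely supplied by Theorems~\ref{Th:vanishing-Poisson}, \ref{Th:vanishing-finite-dimensional} and \ref{Th:Arakawa}.
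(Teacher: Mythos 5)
Your proposal follows essentially the same strategy as the paper: compare $\eta_M$ at the level of associated graded, using the good Kazhdan filtration on $H^0_f(\Zhu(M))$, the $L_0$-filtration on $\Zhu(H^0_f(M))$, the surjection $\pi$ of Lemma~\ref{Lem:surjective-hom-ZHu's-bimodule}, and the isomorphism $\bar\eta_M$ of Theorem~\ref{Th:Arakawa}(ii), with the vanishing of Theorems~\ref{Th:vanishing-Poisson} and \ref{Th:vanishing-finite-dimensional} supplying the spectral-sequence collapse. You have correctly identified the commutative square that makes the argument work (the paper's diagram \eqref{eq:commu-diagram}) and the main bookkeeping issue (strictness of the Kazhdan filtration, handled by the convergence of \cite[Lemma~4.3.3]{Gin08}).

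However, there is a real gap in your reduction step. You claim that right exactness of $\Zhu(?)$ and $H^0_f(?)$, together with Lemma~\ref{Lem:Zhu's-bimodules-of-objects-of-KL}, lets you reduce from a general finitely generated $M\in\KL_k$ to $M\in\KL_k^{\Delta}$ (and then to generalized Verma modules). This is not sufficient. Choosing a presentation $0\ra N\ra V\ra M\ra 0$ with $V\in\KL^{\Delta}_k$ and chasing the diagram
\begin{align*}
 \begin{CD}
 \Zhu(H^0_f(N))@>>>\Zhu(H^0_f(V))@>>> \Zhu(H^0_f(M))@>>> 0\\
@ V\eta_N VV  @ V{\eta_V}VV  @V\eta_M VV\\
H^0_f( \Zhu(N))@>>> H^0_f(\Zhu(V))@>>> H^0_f(\Zhu(M))@>>> 0
 \end{CD}
\end{align*}
shows $\eta_M$ is \emph{surjective} once $\eta_V$ is, but to get \emph{injectivity} you also need $\eta_N$ to be \emph{surjective} — and $N$ is a general object of $\KL_k$, not of $\KL^{\Delta}_k$, and may not even be finitely generated, so you cannot iterate the presentation to get a two-step resolution by objects of $\KL^{\Delta}_k$. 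The paper resolves this by first proving surjectivity of $\eta_M$ for \emph{all} $M\in\KL_k$ (Proposition~\ref{Pro:surjection}(i)) using the same filtered commutative square — only now with $\pi_M$ merely surjective rather than an isomorphism, so $\gr_K\eta_M$ is merely surjective — and separately proving the isomorphism on $\KL^{\Delta}_k$ (Proposition~\ref{Pro:surjection}(ii)). Both inputs are then needed in the diagram chase above. Your own filtration argument, applied to a general finitely generated $M$ where $\pi_M$ is only surjective, would deliver exactly this surjectivity statement, so the gap is fillable with ingredients you already have on the table; but as written your proposal omits this step, and without it the reduction to $\KL^{\Delta}_k$ does not go through. (As a minor point, the further reduction to the generalized Verma modules $M_{\Vg{k}}(E)$ is not needed: the filtered argument works directly on all of $\KL^{\Delta}_k$, which is what the paper does.)
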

\begin{Rem}
 Theorem \ref{Th:Zhu-algebra-correspondence}
holds at the critical level $k=-h\che_{\fing}$ as well
by considering the outer grading 
as in \cite{Ara05,Ara07}.
\end{Rem}

To avoid confusion
 we denote 
 by
$K_{\bullet}\Zhu(M)$  (instead by $F_{\bullet}\Zhu(M)$)
the filtration
of $\Zhu(M)$  with respect to  the grading 
defined by  the conformal vector (\ref{eq:shifted-cv})
for $M\in \KL_k$.
\begin{Lem}\label{Lem;Zhu-p-is-good}
\begin{enumerate}
\item The filtration $K_{\bullet}\Zhu(\Vg{k})$ coincides with
the Kazhdan filtration of $U(\fing)=\Zhu(\Vg{k})$.
 \item  Let $M$ be an object of $\KL_k$,
Then
$K_{\bullet}\Zhu(M)$ is a Kazhdan filtration of $\Zhu(M)$.
It is good 
if  $M$ is finitely generated.

\end{enumerate}\end{Lem}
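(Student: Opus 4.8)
\emph{Strategy.} The plan is to reduce both parts to the Frenkel--Zhu isomorphism $\Zhu(\Vg{k})\cong U(\fing)$, taken with respect to the Sugawara vector $\omega_{\fing}$, and to track how the shift of conformal vector affects the grading filtration.

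For (i), the point is a weight count. With respect to $\omega_{\fing}$ the vertex algebra $\Vg{k}$ is $\Z_{\geq 0}$-graded and the induced filtration on $\Zhu(\Vg{k})=U(\fing)$ is, by \cite{FreZhu92}, the PBW filtration. Replacing $\omega_{\fing}$ by $\omega_{\fing,h}$ of \eqref{eq:shifted-cv} changes $L_0$ into $L_0-\tfrac12(ht^0)$, so for $x\in\fing_j$ the generator $xt^{-1}\1$ acquires $\omega_{\fing,h}$-weight $1-j$ --- which is exactly the Kazhdan degree of $x\in\fing\subset U(\fing)$ --- and a monomial $(x_1t^{-n_1})\cdots(x_rt^{-n_r})\1$ with $x_a\in\fing_{j_a}$ has weight $\sum_a(n_a-j_a)$. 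By Proposition \ref{Pro:change-of-grading-does-not-change-the-grading}(i), $a\mapsto\Delta(1)a$ identifies the $\omega_{\fing,h}$-Zhu algebra with the $\omega_{\fing}$-one; the key observation is that $\Delta(1)=\exp\bigl(\sum_{n\geq 1}\tfrac{(-1)^{n+1}}{2n}(ht^n)\bigr)$ is the identity plus operators that strictly lower the $\omega_{\fing,h}$-weight (each $ht^n$, $n\geq 1$, lowers it by $n$) and commute with $ht^0$; hence $\Delta(1)$ preserves the subspace of vectors of weight $\leq p$ for \emph{either} grading and acts trivially on the associated graded. Transported to $U(\fing)$, the filtration $K_\bullet\Zhu(\Vg{k})$ thus becomes the span of the images of those $v\in\Vg{k}$ of $\omega_{\fing,h}$-weight $\leq p$; using that $\Vg{k}\twoheadrightarrow U(\fing)$ is $\ad h$-equivariant, one checks on the monomials above that this span is precisely $\sum_{i-j\leq p}F_iU(\fing)[j]=K_pU(\fing)$, the Kazhdan filtration. (As a consistency check, both sides have associated graded $\C[\fing^*]$, via \eqref{eq:vareity-of-vgk} and Lemma \ref{Lem:surjective-hom-ZHu's-bimodule} on one side and by definition on the other.)

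For (ii), increasingness and exhaustiveness are clear, and compatibility with the Kazhdan filtration of $U(\fing)=\Zhu(\Vg{k})$ (condition \eqref{eq:compatible-filtration} for the pair $\Zhu(\Vg{k}),\Zhu(M)$) is the bimodule analogue of the standard compatibility of the grading filtrations on a Frenkel--Zhu bimodule and on Zhu's algebra (\cite{FreZhu92,Zhu96}), together with (i). For finitely generated $M$ I would then argue: Lemma \ref{Lem:surjective-hom-ZHu's-bimodule} gives a surjection $M/C_2(M)\twoheadrightarrow\gr_K\Zhu(M)$ of $R_{\Vg{k}}$-modules, and $M/C_2(M)=M/\fing[t^{-1}]t^{-2}M$ (by \eqref{eq:c2-of-affine-module}) is finitely generated over $R_{\Vg{k}}\cong\C[\fing^*]=\gr_KU(\fing)$ since $M$ is finitely generated over $U(\fing[t^{-1}]t^{-1})$; hence $\gr_K\Zhu(M)$ is finitely generated over $\gr_KU(\fing)$, i.e.\ the filtration is good. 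Separatedness then follows from goodness by a standard Rees-module argument, $\gr_KU(\fing)\cong\C[\fing^*]$ being Noetherian and $K_\bullet U(\fing)$ separated; so $K_\bullet\Zhu(M)$ is a Kazhdan filtration. A general $M\in\KL_k$ is the direct limit of its finitely generated submodules and $\Zhu(-)$ commutes with direct limits.

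\emph{Main difficulty.} The real content is (i), and within it the assertion that the two filtrations \emph{coincide}, not merely that their associated graded Poisson algebras do (both being $\C[\fing^*]$): this is what forces the analysis of $\Delta(1)$ as a filtered automorphism of unipotent type. Extra care is warranted because $\Vg{k}$ with $\omega_{\fing,h}$ is \emph{not} positively graded (it has negative-weight vectors once $\fing_j\neq 0$ for some $j\geq 2$), so neither filtration is bounded below and separatedness is part of what has to be proved. Part (ii) is then essentially formal, the only subtlety being that goodness must be in hand before separatedness can be deduced.
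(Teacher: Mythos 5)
Your argument for part (i) is correct and fills in what the paper dismisses as ``easily seen from the definition'': the weight count for $xt^{-1}\1$ under the shifted $L_0$, and the observation that $\Delta(1)-1$ strictly lowers both gradings so that the identification via $\Delta(1)$ carries $M_{\leq p}^{new}$ to itself, are exactly what is needed to match the image of $\Vg{k}_{\leq p}^{new}$ with $K_p U(\fing)$. Your treatment of goodness in (ii) coincides with the paper's: the surjection $\pi_M\colon M/C_2(M)\twoheadrightarrow\gr_K\Zhu(M)$ from Lemma \ref{Lem:surjective-hom-ZHu's-bimodule} together with finite generation of $M/C_2(M)$ over $\C[\fing^*]$ is precisely the paper's argument.

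Two steps in your separatedness reasoning are shaky. First, the ``standard Rees-module argument'' that goodness implies separatedness requires the filtration on the ring to be bounded below (equivalently $K_{-1}=0$), and the Kazhdan filtration on $U(\fing)$ is \emph{not} bounded below: $K_{-p}U(\fing)\supseteq F_iU(\fing)[j]$ for $j-i\geq p$, and such components are nonzero for all $p$ once $\fing_j\neq 0$ for some $j\geq 2$. Without boundedness below, good does not force separated (take any module with the constant filtration $F_pM=M$ for all $p$: its $\gr$ is zero, hence trivially finitely generated, yet $\bigcap F_p = M$). The correct and much simpler reason separatedness holds for finitely generated $M\in\KL_k$ is the one you half-notice yourself in your ``main difficulty'' paragraph: such an $M$ is a quotient of a finite direct sum of $M_{\Vg{k}}(E_i)$ with $E_i$ finite-dimensional, so the new conformal weights on $M$ are bounded below, hence $M_{\leq p}^{new}=0$ for $p\ll 0$ and thus $K_p\Zhu(M)=0$ there. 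Second, your reduction to the finitely generated case by direct limits does not work for separatedness: a directed union of separated filtrations need not be separated (the lower bound on weights may escape to $-\infty$). This is arguably also a soft spot in the paper's ``easily seen from the definition,'' but since only the finitely generated case of this lemma is ever used (in Proposition \ref{Pro:surjection} and Theorem \ref{Th:Zhu-algebra-correspondence}), it does not affect anything downstream; still, you should not present the direct-limit reduction as establishing separatedness for general $M$.
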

\begin{proof}
(i) and 
the first assertion of (ii) is easily seen  from the definition.
To see  the second assertion of (ii)
observe that
$M/C_2(M)$ is a finitely generated 
$\C[\fing^*]$-module
for 
a finitely generated object $M$ of $\KL_k$.
Hence so is $\gr_K \Zhu(M)$ by Lemma \ref{Lem:surjective-hom-ZHu's-bimodule}.
\end{proof}
\begin{Pro}\label{Pro:surjection}
$ $

\begin{enumerate}
\item
For an object  $M$  of $\KL_k$,
$\eta_M: \Zhu(\BRS{0}{M})\ra \BRS{0}{\Zhu(M)}$ 
is surjective.
\item
For an object  $M$  of $\KL_k^{\Delta}$,
$\eta_M: \Zhu(\BRS{0}{M})\ra \BRS{0}{\Zhu(M)}$ 
is an isomorphism.
\end{enumerate}\end{Pro}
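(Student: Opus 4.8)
The plan is to run a filtered-complex / spectral-sequence argument parallel to the one used for Theorem~\ref{Th:vanishing-finite-dimensional}, but now on the side of Zhu algebras rather than on $C_2$-modules. Put on $C^{\ch}(M)$ the Kazhdan-type filtration induced from the grading defined by the shifted conformal vector $\omega_{\fing,h}$ of \eqref{eq:shifted-cv}, transported through the quotient complex $\Zhu(C^{\ch}(M))\cong(C(\Zhu(M)),\ad d)$. By Lemma~\ref{Lem;Zhu-p-is-good} the filtration $K_\bullet\Zhu(M)$ is a Kazhdan filtration of the $U(\fing)$-bimodule $\Zhu(M)$, good when $M$ is finitely generated; concretely $\gr_K\Zhu(\Vg{k})\cong\C[\fing^*]$ and $\gr_K\Zhu(M)$ is a module in $\overline{\HC}$. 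The key point is that the associated graded of the differential graded algebra $(C(\Zhu(M)),\ad d)$ is precisely the classical complex $(\bar C(\gr_K\Zhu(M)),\ad\bar d)$ of Section~\ref{sectopn:classicalBRST}, exactly as in the discussion preceding Theorem~\ref{Th:vanishing-finite-dimensional}.

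For part (i): first reduce to $M$ finitely generated, since both sides commute with injective limits (the functor $M\mapsto\BRS{0}{M}$ is exact on $\KL_k$ by Theorem~\ref{Th:Arakawa}(i), and $\Zhu(?)$ is right exact by Lemma~\ref{Lem:FreZhu}). Then, using that $K_\bullet C(\Zhu(M))$ is convergent (as in the proof of Theorem~\ref{Th:vanishing-finite-dimensional}(i), via \cite[Lemma~4.3.3]{Gin08}), there is a spectral sequence converging to $H^\bullet_f(\Zhu(M))$ whose $E_1$-page is computed from the classical $\bar d_-$-cohomology; by Theorem~\ref{Th:vanishing-Poisson} applied to $\gr_K\Zhu(M)\in\overline{\HC}$ it degenerates and gives $\gr_K H^0_f(\Zhu(M))\cong (\gr_K\Zhu(M)/\bar I_\chi\gr_K\Zhu(M))^{\ad\fing_{>0}}$ and vanishing in other degrees. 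On the other hand $\gr_K\Zhu(\BRS{0}{M})$ surjects, via the map $\pi_{\BRS{0}{M}}$ of Lemma~\ref{Lem:surjective-hom-ZHu's-bimodule}, onto a quotient of $H^0_f(M)/C_2(H^0_f(M))$, which by Theorem~\ref{Th:Arakawa}(ii) is $H^0_f(M/C_2(M))$; and $\eta_M$ is compatible with these filtrations, $\bar\eta_M$ being its associated graded. So it suffices to show $\gr_K\eta_M$ is surjective, which follows because the composite $R_{\Wg{k}}\text{-module }H^0_f(M/C_2(M))\twoheadleftarrow\gr_K\Zhu(\BRS{0}{M})\xrightarrow{\gr_K\eta_M}\gr_KH^0_f(\Zhu(M))$ realizes the target as $(\gr_K\Zhu(M)/\bar I_\chi\gr_K\Zhu(M))^{\ad\fing_{>0}}$, a quotient computed from $\gr_K\Zhu(M)=M/C_2(M)$ (Lemma~\ref{Lem:Zhu's-bimodules-of-objects-of-KL}(i) once we pass to $\KL^\Delta_k$), so surjectivity is forced by the classical statement of Theorem~\ref{Th:vanishing-Poisson} together with right-exactness.

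For part (ii): when $M\in\KL^\Delta_k$, Lemma~\ref{Lem:Zhu's-bimodules-of-objects-of-KL}(i) gives $\pi_M:M/C_2(M)\isomap\gr_F\Zhu(M)$, so $\gr_K\Zhu(M)$ is genuinely $M/C_2(M)$ with no proper quotient intervening, and the surjection $\pi_{\BRS{0}{M}}$ of Lemma~\ref{Lem:surjective-hom-ZHu's-bimodule} applied to $\BRS{0}{M}$ — which lies in $\Wg{k}\text{-}\mathrm{gMod}$ and is in fact free over the relevant negative-mode subalgebra by Theorem~\ref{Th:Arakawa} — is an isomorphism by the same freeness argument as in Lemma~\ref{Lem:Zhu's-bimodules-of-objects-of-KL}(i). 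Hence both $\gr_K\Zhu(\BRS{0}{M})$ and $\gr_K\BRS{0}{\Zhu(M)}$ are identified with $H^0_f(M/C_2(M))$, and $\gr_K\eta_M$ is the identity under these identifications; since the filtrations are exhaustive and separated (as $\BRS{0}{M}$ is positively graded), $\eta_M$ itself is an isomorphism.

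The main obstacle I anticipate is the bookkeeping around the shifted conformal vector $\omega_{\fing,h}$: one must check that the Kazhdan filtration coming from $L_0^{\omega_{\fing,h}}$ on $C^{\ch}(M)$ is exactly the one making $\gr_K C(\Zhu(M))\cong\bar C(M/C_2(M))$ as filtered complexes over $\bar C(\C[\fing^*])$, including the matching of the differentials $\ad d\leftrightarrow\ad\bar d$ and the identification of $\gr_K\Zhu(M)$ with $M/C_2(M)$ (not merely a quotient) in the $\KL^\Delta_k$ case. Everything else is a transcription of the proof of Theorem~\ref{Th:vanishing-finite-dimensional} into the vertex-algebraic setting, plus convergence of the filtration, which is handled exactly as there via \cite{Gin08} and \cite{CarEil56}.
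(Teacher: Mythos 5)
Your overall strategy for part (i) — pass to the Kazhdan filtration $K_\bullet$ on $\Zhu(M)$ and on $C(\Zhu(M))$, identify the associated graded complex with $\bar C(\gr_K\Zhu(M))$, invoke the convergent spectral sequence from Theorem~\ref{Th:vanishing-finite-dimensional}, and then compare $\gr_K\eta_M$ with the $C_2$-module map $\bar\eta_M$ via the surjections of Lemma~\ref{Lem:surjective-hom-ZHu's-bimodule} and Theorem~\ref{Th:vanishing-Poisson} — is indeed the paper's mechanism. But the write-up has the arrows backwards in a couple of places, and one confusion that should be fixed: $\bar\eta_M$ is \emph{not} the associated graded of $\eta_M$ (that is $\gr_K\eta_M$); they are related only through the commutative square in which $\pi_{\BRS{0}{M}}\colon \BRS{0}{M}/C_2(\BRS{0}{M})\twoheadrightarrow\gr_K\Zhu(\BRS{0}{M})$ and $H^0_f(\pi_M)\colon H^0_f(M/C_2(M))\twoheadrightarrow \gr_K\BRS{0}{\Zhu(M)}$ sit as the vertical edges, with $\bar\eta_M$ and $\gr_K\eta_M$ the horizontal ones. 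Surjectivity of $\gr_K\eta_M$ then follows because $\bar\eta_M$ is an isomorphism (Theorem~\ref{Th:Arakawa}(ii)) and $H^0_f(\pi_M)$ is onto; the clause invoking ``Lemma~\ref{Lem:Zhu's-bimodules-of-objects-of-KL}(i) once we pass to $\KL^\Delta_k$'' is out of place here, since in part (i) you only have $M\in\KL_k$ and only the surjectivity of $\pi_M$, not its injectivity.

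The genuine gap is in your proof of part (ii). You claim that $\pi_{\BRS{0}{M}}$ is an isomorphism because ``$\BRS{0}{M}$ is in fact free over the relevant negative-mode subalgebra by Theorem~\ref{Th:Arakawa}.'' Theorem~\ref{Th:Arakawa} asserts cohomological vanishing and the $C_2$-module identification, but says nothing about $\BRS{0}{M}$ being a free module over a negative-mode subalgebra of $\Wg{k}$; freeness of $M$ over $U(\fing[t\inv]t\inv)$ does not immediately transfer through the reduction functor, and the argument of Lemma~\ref{Lem:Zhu's-bimodules-of-objects-of-KL}(i) cannot be ``transplanted'' to $\BRS{0}{M}$ without a new proof. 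The paper instead deduces that $\pi_{\BRS{0}{M}}$ and $\gr_K\eta_M$ are simultaneously isomorphisms by a diagram chase in the commutative square: for $M\in\KL^\Delta_k$, $\pi_M$ is an isomorphism (Lemma~\ref{Lem:Zhu's-bimodules-of-objects-of-KL}(i)), hence $H^0_f(\pi_M)$ is an isomorphism; $\bar\eta_M$ is an isomorphism; so the composite $\gr_K\eta_M\circ\pi_{\BRS{0}{M}}=H^0_f(\pi_M)\circ\bar\eta_M$ is an isomorphism, and since $\pi_{\BRS{0}{M}}$ is surjective it must be injective as well, forcing both $\pi_{\BRS{0}{M}}$ and $\gr_K\eta_M$ to be isomorphisms. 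Your freeness shortcut would, if proven, give a cleaner route, but as written it is an unsupported assertion and the step does not go through.
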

\begin{proof}
(i)
First,
suppose that 
$M$ is finitely generated.
By Lemma \ref{Lem;Zhu-p-is-good},
 $K_{\bullet}\Zhu(M)$ is a good Kazhdan filtration of $\Zhu(M)$.
Hence we have \begin{align}
\gr_K \BRS{0}{\Zhu(M)}
\cong \BRS{0}{\gr_K \Zhu( M)}
\label{eq:2012:10-1-2}
\end{align}
by Theorem \ref{Th:vanishing-finite-dimensional}.
Here
$
\gr_K H^0_f(\Zhu(M))
$ is the associated graded with respect to the induced filtration
 $K_p H^0_f(\Zhu(M))=\im (H^0_f(K_p\Zhu(M))\ra 
H^0_f(\Zhu(M)))$.
Since
$\eta_M(K_p\Zhu(H^0_f(M)))\subset K_p H^0_f(\Zhu(M))$,
$\eta_M$ induces a homomorphism
\begin{align*}
 \gr_K \eta_M: \gr_K \Zhu(H^0_f(M))\ra \gr_K \BRS{0}{\Zhu(M)}.
\end{align*}
It is enough to show that
$\gr \eta_M$ is surjective.

Consider the surjection
\begin{align*}
\pi_M: M/C_2(M)\twoheadrightarrow  \gr_K \Zhu(M).
\end{align*}
Since both $M/C_2(M)$ and $\gr_K\Zhu(M)$ are 
objects of $\overline{\mc{HC}}$,
this induces the surjection
\begin{align*}
 H^0_f(\pi_M):H_f^0(M/C_2(M))
\twoheadrightarrow  H^0_f(\gr_K \Zhu(M))\cong \gr_K \BRS{0}{\Zhu(M)}
\end{align*}
by Theorem \ref{Th:vanishing-Poisson}.
 
Now
we have the following commutative diagram:
\begin{align}
  \begin{CD}
\BRS{0}{M}/C_2(\BRS{0}{M}) @ >\pi_{H^0_f(M)} >> \gr_K \Zhu(\BRS{0}{M})\\
 @ V \bar \eta_M  VV
  @VV\gr \eta_M  V \\
 H_f^0(M/C_2(M))  @>H^0_f(\pi_M) >>     \gr_K \BRS{0}{\Zhu(M)}.
  \end{CD}
\label{eq:commu-diagram}
\end{align}
Since $\bar \eta_M$ is an isomorphism by Theorem \ref{Th:Arakawa} (ii),
it follows that
$\gr \eta_M$ is surjective as required.

Next,
 let $M$ be an arbitrary object of $\KL_k$.
There exists a sequence of
finitely generated objects $M_0\subset M_1\subset
 M_2\subset
\dots $ in $\KL_k$
such that
$M=\bigcup_i M_i$.
Since
(co)homology functor  commutes with injective limits,
$\Zhu(M)=\lim\limits_{\longrightarrow
\atop  i}\Zhu(M_i)$,
$\BRS{0}{M}
=\lim\limits_{\longrightarrow \atop i}\BRS{0}{M_i}$,
$\Zhu(H^0_f(M))=\lim\limits_{\longrightarrow \atop i}\Zhu(\BRS{0}{M_i})$,
and 
$H^0_f(\Zhu(M))=\lim\limits_{\longrightarrow
\atop  i}H^0_f(\Zhu(M_i))$.
This proves the  assertion.

(ii)
 By Lemma \ref{Lem:Zhu's-bimodules-of-objects-of-KL} (i)
$H^0_f(\pi_M)$ is an isomorphism.
Hence the commutativity of (\ref{eq:commu-diagram})
implies that 
$\pi_{\BRS{0}{M}}$ and
$\gr \eta_M$ are  isomorphisms,
and hence, so is $\eta_M$.
\end{proof}
\begin{proof}[Proof of Theorem \ref{Th:Zhu-algebra-correspondence}]
As in the proof of Proposition   \ref{Pro:surjection}
it is sufficient to show the case that
$M$ is finitely generated.
Then
there exists an
exact sequence
\begin{align}
 0\ra N\ra V\ra M\ra 0
\label{eq:exact-seq-started}
\end{align}
in the category $\KL_k$
with $V\in \KL^{\Delta}_k$.
By the right exactness of the functor
$\Zhu(?)$
this yields an exact sequence
\begin{align*}
 \Zhu(N)\ra  \Zhu(V)\ra \Zhu(M)\ra 0
\end{align*}
in the category $\mc{HC}$.
Applying the 
exact functor $H_f^0(?):
\mc{HC}\ra U(\fing,f)\on{-biMod}$ (Theorem \ref{Th:vanishing-finite-dimensional})
to the above  sequence
we obtain an exact sequence
\begin{align*}
H^0_f( \Zhu(N))\ra  H^0_f(\Zhu(V))\ra H^0_f(\Zhu(M))\ra 0.
\end{align*}
 
On the other hand
by applying the exact functor $H^0_f(?):
\KL_k\ra \Wg{k}\on{-Mod}$
(Theorem \ref{Th:Arakawa}) to (\ref{eq:exact-seq-started})
we obtain  the exact sequence
\begin{align*}
 0 \ra H^0_f(N)\ra H^0_f(V)\ra H^0_f(M)\ra 0.
\end{align*}
This yields   an exact sequence
\begin{align}
 \Zhu(H^0_f(N))\ra \Zhu(H^0_f(V))\ra \Zhu(H^0_f(M))\ra 0.
\end{align}

Now we have the following
commutative diagram:
\begin{align}
 \begin{CD}
 \Zhu(H^0_f(N))@>>>\Zhu(H^0_f(V))@>>> \Zhu(H^0_f(M))@>>> 0\\
@ V\eta_N VV  @ V{\eta_V}VV  @V\eta_M VV\\
H^0_f( \Zhu(N))@>>> H^0_f(\Zhu(V))@>>> H^0_f(\Zhu(M))@>>> 0.
 \end{CD}&
\end{align}
By Proposition \ref{Pro:surjection}
 $\eta_N$ and $\eta_M$ are surjective
and 
$\eta_V$ is an isomorphism.
As the horizontal sequences are exact
it follows that
$\eta_M$ is an isomorphism.
This completes the proof.
\end{proof}

For an  ideal 
 $N$ of $\Vg{k}$,
let
$J_N$ denote the image of $\Zhu(N)$ in $\Zhu(\Vg{k})=U(\fing)$,
so that
\begin{align}
 A(\Vg{k}/N)=U(\fing)/J_N.
\label{eq:zhu-of-quotient}
\end{align}
Note that
$\BRS{0}{\Vg{k}/N}$ is a quotient vertex  algebra
of $\Wg{k}$ 
 provided it is nonzero (see \eqref{eq:general-quotient}).
\begin{Th}\label{TH:Zhu-algebra-of-quotients}
For any ideal  $N$  of $\Vg{k}$,
we have the isomorphism of algebras
\begin{align*}
 \Zhu(\BRS{0}{\Vg{k}/N})&\cong U(\fing,f)/\BRS{0}{J_N}.
\end{align*}
\end{Th}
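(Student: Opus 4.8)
The plan is to assemble the statement from three ingredients already in place: the reduction--Zhu commutativity of Theorem~\ref{Th:Zhu-algebra-correspondence}, the right exactness of the functor $\Zhu(?)$ (Lemma~\ref{Lem:FreZhu}), and the identity \eqref{eq:H(U/I)} for finite $W$-algebras of quotients of $U(\fing)$. Once Theorem~\ref{Th:Zhu-algebra-correspondence} is granted, the argument is essentially formal, so the real content is bookkeeping with algebra structures.

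First I would observe that $\Vg{k}/N$ is an object of $\KL_k$ (a graded quotient vertex algebra of $\Vg{k}$, with locally finite $\fing$-action), so Theorem~\ref{Th:Zhu-algebra-correspondence} applies and gives an isomorphism of $U(\fing,f)$-bimodules
\[
\eta_{\Vg{k}/N}\colon \Zhu(\BRS{0}{\Vg{k}/N}) \isomap \BRS{0}{\Zhu(\Vg{k}/N)}.
\]
Since $\Vg{k}/N$ is a quotient vertex algebra, $\BRS{0}{\Vg{k}/N}=\Wg{k}/\BRS{0}{N}$ is again a quotient vertex algebra by Theorem~\ref{Th:Arakawa}, so its Zhu's algebra is a quotient of $\Zhu(\Wg{k})$; likewise $\Zhu(\Vg{k}/N)$ is a quotient algebra of $U(\fing)=\Zhu(\Vg{k})$ and the BRST differential $d$ descends compatibly, so $\BRS{0}{\Zhu(\Vg{k}/N)}$ is a quotient of $\BRS{0}{\Zhu(\Wg{k})}$. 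The map $\eta_{\Vg{k}/N}$, being induced by the identity on cochains, intertwines these quotient maps, hence is an algebra isomorphism (compatibility for the adjoint module is the algebra isomorphism \eqref{eq:iso-of-Zhuo-of-universals}, and the general case follows by functoriality).

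Next, applying the right-exact functor $\Zhu(?)$ to $0\to N\to \Vg{k}\to \Vg{k}/N\to 0$ yields the exact sequence $\Zhu(N)\to \Zhu(\Vg{k})\to \Zhu(\Vg{k}/N)\to 0$, and hence an algebra isomorphism $\Zhu(\Vg{k}/N)\cong U(\fing)/J_N$, where $J_N$ is the image of $\Zhu(N)$ in $\Zhu(\Vg{k})=U(\fing)$. As the image of a sub-bimodule of $U(\fing)$, $J_N$ is a two-sided ideal of $U(\fing)$ lying in $\HC$, so \eqref{eq:H(U/I)} (which itself comes from the exactness in Theorem~\ref{Th:vanishing-finite-dimensional}) gives $\BRS{0}{U(\fing)/J_N}\cong U(\fing,f)/\BRS{0}{J_N}$. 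Splicing this with $\BRS{0}{\Zhu(\Vg{k}/N)}\cong \BRS{0}{U(\fing)/J_N}$ and with $\eta_{\Vg{k}/N}$ proves the theorem.

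The only step requiring genuine care, and the one I would write out in detail, is the verification that every isomorphism above respects the multiplicative (not just bimodule) structure: tracing the explicit maps $\eta_\bullet$ and the canonical surjections, one checks that $\eta_{\Vg{k}/N}$ carries the $*$-product of $\Zhu(\BRS{0}{\Vg{k}/N})$ to that of $\BRS{0}{\Zhu(\Vg{k}/N)}$ by reduction from the adjoint case, and that the identification $\Zhu(\Vg{k}/N)\cong U(\fing)/J_N$ from Lemma~\ref{Lem:FreZhu} is the obvious one on strong generators. Everything else -- exactness, the description of $C_2$- and $O$-subspaces, the ideal property of $J_N$ -- is routine given the machinery of Sections~\ref{section:finite W-algebras}--\ref{section:Zhu'ss-bimodules-for-W}.
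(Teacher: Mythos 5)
Your proposal is correct and follows essentially the same route as the paper: apply Theorem~\ref{Th:Zhu-algebra-correspondence} to $L=\Vg{k}/N$, then use the exactness of $H^0_f$ on $\mc{HC}$ (Theorem~\ref{Th:vanishing-finite-dimensional}, equivalently \eqref{eq:H(U/I)}) on $0\to J_N\to U(\fing)\to \Zhu(L)\to 0$ to identify $\BRS{0}{\Zhu(L)}$ with $U(\fing,f)/\BRS{0}{J_N}$. The only difference is that you spell out the verification that $\eta_{\Vg{k}/N}$ is multiplicative (not merely a bimodule map), which the paper leaves implicit; that is a legitimate and worthwhile remark, and your justification via functoriality from the adjoint case \eqref{eq:iso-of-Zhuo-of-universals} is the right one.
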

 \begin{proof}
Set $L=\Vg{k}/N$.
By Theorem \ref{Th:Zhu-algebra-correspondence},
\begin{align*}
 \Zhu(\BRS{0}{L})\cong \BRS{0}{\Zhu(L)},
\end{align*}
and
by Theorem \ref{Th:vanishing-finite-dimensional}
the exact sequence
$0\ra J_N\ra U(\fing)\ra \Zhu(L)\ra 0$
induces the exact sequence
\begin{align*}
 0\ra \BRS{0}{J_N}\ra U(\fing,f)\ra \BRS{0}{\Zhu(L)}\ra 0.
\end{align*}
This completes the proof.
 \end{proof}

The following assertion follows 
immediately from Theorems \ref{eq:equivalence-of-categoroes}
and \ref{TH:Zhu-algebra-of-quotients}.
\begin{Th}\label{Th:equivalence}
For any ideal $N$ of
$\Vg{k}$ we have
 the equivalence of categories
\begin{align*}
\mc{C}^{J_N}\isomap \Zhu(\BRS{0}{\Vg{k}/N})\on{-Mod},
\quad M\mapsto \on{Wh}^{\finm}(M).
\end{align*}
A quasi-inverse functor is given by $E\mapsto Y\*_{U(\fing,f)}E$.
\end{Th}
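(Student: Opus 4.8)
The plan is to deduce Theorem \ref{Th:equivalence} as a formal combination of Theorem \ref{eq:equivalence-of-categoroes} with the identification of Zhu's algebra furnished by Theorem \ref{TH:Zhu-algebra-of-quotients}; the only content is to line these two statements up.

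First I would apply Theorem \ref{eq:equivalence-of-categoroes} to the two-sided ideal $I=J_N$ of $U(\fing)$. This directly gives an equivalence of categories
\[
\mc{C}^{J_N}\isomap H^0_f(U(\fing)/J_N)\on{-Mod},\qquad M\mapsto \on{Wh}^{\finm}(M),
\]
whose quasi-inverse is the restriction to $\mc{C}^{J_N}$ of Skryabin's equivalence $E\mapsto Y\*_{U(\fing,f)}E$; recall from the proof of Theorem \ref{eq:equivalence-of-categoroes} that this restriction is well defined precisely because $Y\*_{U(\fing,f)}E$ is annihilated by $J_N$ whenever $E$ is annihilated by $\BRS{0}{J_N}$.

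Next I would identify the target algebra $H^0_f(U(\fing)/J_N)$. By \eqref{eq:H(U/I)} there is an algebra isomorphism $H^0_f(U(\fing)/J_N)\cong U(\fing,f)/\BRS{0}{J_N}$, and by Theorem \ref{TH:Zhu-algebra-of-quotients} the right-hand side is isomorphic, via the map $\eta_{\Vg{k}}$ of \eqref{eq:iso-of-Zhuo-of-universals}, to $\Zhu(\BRS{0}{\Vg{k}/N})$. Hence $H^0_f(U(\fing)/J_N)\on{-Mod}$ is canonically identified with $\Zhu(\BRS{0}{\Vg{k}/N})\on{-Mod}$, and composing this identification with the equivalence of the previous step yields the asserted equivalence together with its stated quasi-inverse.

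I do not expect a genuine obstacle here, since all the necessary ingredients are already in place in Sections \ref{section:finite W-algebras} and \ref{section:Zhu'ss-bimodules-for-W}. The one point that merits a sentence of care is checking that the isomorphisms being composed are the canonical ones, compatible with the chain map $\eta_M$ of Section \ref{section:Zhu'ss-bimodules-for-W} and with the definition of $J_N$ as the image of $\Zhu(N)$ in $\Zhu(\Vg{k})=U(\fing)$, so that the functor $M\mapsto \on{Wh}^{\finm}(M)$ and the functor $E\mapsto Y\*_{U(\fing,f)}E$ really do transport $\mc{C}^{J_N}$ to $\Zhu(\BRS{0}{\Vg{k}/N})\on{-Mod}$ and back; this is immediate from the explicit formulas involved.
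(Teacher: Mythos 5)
Your argument matches the paper's proof exactly: the paper simply states that the theorem "follows immediately from Theorems \ref{eq:equivalence-of-categoroes} and \ref{TH:Zhu-algebra-of-quotients}," and your proposal just spells out that deduction (apply Theorem \ref{eq:equivalence-of-categoroes} with $I=J_N$, then identify $H^0_f(U(\fing)/J_N)$ with $\Zhu(\BRS{0}{\Vg{k}/N})$ via \eqref{eq:H(U/I)} and Theorem \ref{TH:Zhu-algebra-of-quotients}). Correct, and the same route as the paper.
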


\section{Varieties associated with Zhu's algebras
of admissible affine vertex algebras }
\label{section:varieties}
Let 
$\fing=\finn_-\+\finh\+\finn$ be a triangular decomposition
of $\fing$ with Cartan subalgebra $\finh$,
$\Delta$ the set of  roots of $\fing$,
$\Delta_+$ the set of positive roots of $\fing$,
$W$ the Weyl group of $\fing$,
$Q\che\subset \finh$ the coroot lattice of $\fing$,
$P\che\subset \finh$ the coweight lattice of $\fing$,
$\rho$  the half sum of positive roots of $\fing$,
$\rho\che$
 the half sum of positive coroots of $\fing$.
For $\lam\in \dual{\finh}$,
let 
$\Ms{ \lam}$ be the Verma module
of $\fing$ with highest weight $ \lam\in \dual{\finh}$,
 $\Ls{\lam}$
 the unique simple quotient of $\Ms{\lam}$.

Let
$\affh=\finh\+ \C K$  be the Cartan subalgebra 
of $\affg$,
$\dual{\affh}=\finh^* \+\C \Lam_0$ the dual of $\affh$,
where $\Lam_0(K)=1$,
$\Lam_0(\finh)=0$.
Let 
$\wh{\Delta}^{re}$ be the set of real roots in the dual
$\dual{\tilde{\finh}}$
of the extended Cartan subalgebra $\tilde{\finh}$ of $\affg$,
$\wh{\Delta}^{re}_+$ the set of positive real roots,
$\affW=W\ltimes Q\che$ the Weyl group of $\affg$,
$\eW=W\ltimes P\che$ the extended Weyl group of $\affg$,
$\wh{\rho}=\rho+h\che \Lam_0$.
For $\lam \in \dual{\affh}$,
let $\wh{\Delta}(\lam)=\{\alpha\in \wh{\Delta}^{re}| 
\bra \lam+\wh\rho,\alpha\che\ket \in \Z\}$, the set of integral roots of
$\lam$,
$\affW(\lam)=\bra s_{\alpha}|  \alpha\in \wh{\Delta}(\lam)\ket
\subset \affW$ the
integral
Weyl group of $\lam$,
where $s_{\alpha}$ is the reflection 
with respect to $\alpha$.
Denote by  $\bar \lam$  the restriction of $\lam\in \dual{\affh}$
to $\finh$.

Set
\begin{align*}
 \affh_k^*=\{\lam\in \dual{\affh}| 
\lam(K)=k\},
\end{align*}
the set of weights of $\affg$ of level $k$.
For $\lam\in \dual{\affh}_k$,
let $L(\lam)$ be the irreducible representation of 
$\affg$ with highest weight $\lam$.
Clearly, 
$L(\lam)$ is irreducible  as a $\Vg{k}$-module.

A weight $\lam\in \dual{\affh}$ is called 
{\em admissible} if 
(1) $\lam$ is regular dominant, that is,
$\bra \lam+\wh\rho,\alpha\che\ket\not \in \{0,-1,-2,-3,\dots\}$
for all $\alpha\in \wh{\Delta}^{re}_+$,
and (2) $\Q\wh{\Delta}(\lam)=\Q\wh{\Delta}^{re}$.
The admissible weights of
$\affg$ were classified in \cite{KacWak89}.
The module $L(\lam)$ is called  admissible
if  $\lam$ is admissible.
Admissible representations
are (conjecturally  all) modular invariant representations of $\affg$
(\cite{KacWak89}).

A number 
 $k$ is called  {\em admissible  for $\affg$}
if $k\Lam_0$ is an admissible weight.
By  \cite[Proposition 1.2]{KacWak08},
$k$ is an admissible number for $\affg$ if and only if
\begin{align}
 k+h\che=\frac{p}{q},
\quad p,q\in \N,\quad (p,q)=1,\quad
 p\geq 
\begin{cases}
h_{\fing}\che&\text{if }(r\che,q)=1,\\
h_{\fing}&\text{if }(r\che,q)=r\che.
\end{cases}
\label{eq:admissible number}
\end{align}
A number $k$
of the form
(\ref{eq:admissible number})
is called an {\em admissible number with denominator $q$.}

For an admissible number $k$ of $\affg$,
let 
$Pr^k$ be the set of admissible weights
$\lam$
of level $k$ such that
$\wh{\Delta}(\lam)\cong \wh{\Delta}(k\Lam_0)$ as root systems.
 \begin{Th}[\cite{A12-2}]
\label{Th:classification-of-simple-modules-over-admissible-affine}
Let $k$ be an admissible number for $\affg$,
$\lam\in \dual{\affh}_k$.
Then $\Irr{\lam}$ is a 
module over the vertex algebra $\Irr{k\lam_0}$ if and only if
$\lam\in Pr^k$.
In particular the vertex operator algebra $L(k\Lam_0)$  is rational in the category $\BGG$
of $\affg$ as conjectured in
\cite{AdaMil95}.
 \end{Th}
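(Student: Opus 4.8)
The plan is to reduce the statement, via Zhu's algebra, to a question about annihilators of simple highest weight $\fing$-modules, and then to control the resulting ideal of $U(\fing)$ through associated varieties. Since $L(k\Lam_0)$ is $\Z_{\geq 0}$-graded, Zhu's theorem \cite{Zhu96} (see \eqref{Zhus-theorem}) identifies $\on{Irr}(L(k\Lam_0))$ with $\on{Irr}(\Zhu(L(k\Lam_0)))$. Under the isomorphism $\Zhu(\Vg{k})\cong U(\fing)$, write $J_k$ for the image of $\Zhu(N_k(\fing))$; by right-exactness of $M\mapsto\Zhu(M)$ (Lemma \ref{Lem:FreZhu}) this is a two-sided ideal and $\Zhu(L(k\Lam_0))\cong U(\fing)/J_k$. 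For $\lam\in\dual{\affh}_k$ (with $k$ non-critical, as it is here), the lowest graded piece of $L(\lam)$ is, as a $\fing$-module, the simple highest weight module $\Ls{\bar\lam}$: indeed $\fing[t]t$ strictly lowers the conformal weight, hence annihilates this piece, so any $\fing$-singular vector in it is already $\affg$-singular and must vanish by simplicity of $L(\lam)$. It then follows (again from right-exactness of $\Zhu$ and the simplicity of $L(\lam)$) that $L(\lam)$ is a module over $L(k\Lam_0)$ if and only if $\Ls{\bar\lam}$ is a module over $U(\fing)/J_k$, i.e.\ $J_k\subset\Ann_{U(\fing)}\Ls{\bar\lam}$. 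The theorem thus reduces to showing that this inclusion holds exactly when $\lam\in Pr^k$.

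For the ``if'' direction I would show directly that $N_k(\fing)$ annihilates $L(\lam)$ whenever $\lam\in Pr^k$. One uses that $N_k(\fing)$ is generated by singular vectors whose highest weights are determined by the Kac--Kazhdan determinant formula, and then checks --- combining their explicit Malikov--Feigin--Fuchs form with the Kac--Wakimoto classification of admissible weights \cite{KacWak89} --- that they produce no nonzero singular vector in $L(\lam)$ precisely when $\wh{\Delta}(\lam)\cong\wh{\Delta}(k\Lam_0)$; equivalently, this is forced by the modular invariance of admissible characters. Granting the classification, the concluding assertion --- that $L(k\Lam_0)$ is rational in $\BGG$, confirming the conjecture of \cite{AdaMil95} --- then follows by combining it with the complete reducibility of admissible $\affg$-modules of Gorelik--Kac \cite{GorKac0905}: an $L(k\Lam_0)$-module lying in $\BGG$ has all of its composition factors among the $L(\lam)$, $\lam\in Pr^k$, and is hence a direct sum of such.

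The ``only if'' direction is the crux, and where I expect the main obstacle. The idea is to bound $\Var\Zhu(L(k\Lam_0))$ by the associated variety of the vertex algebra. By \eqref{eq:surjective-to-Zhu} there is a surjection $R_{L(k\Lam_0)}\twoheadrightarrow\gr_F\Zhu(L(k\Lam_0))$ of graded Poisson algebras, so $\Var\Zhu(L(k\Lam_0))$ is a $G$-stable closed subvariety of $X_{L(k\Lam_0)}:=\Specm R_{L(k\Lam_0)}$; and for admissible $k$ this $X_{L(k\Lam_0)}$ is a nilpotent orbit closure $\overline{\mathbb{O}}_q\subset\fing^*\cong\fing$ depending only on $\fing$ and the denominator $q$ of $k+h_{\fing}\che$ (the content of the associated-variety computations of \cite{Ara09b,A12-2}). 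Hence $\wVar(J_k)\subset\overline{\mathbb{O}}_q$, so for every $L(k\Lam_0)$-module $L(\lam)$ one gets $\wVar(\Ann_{U(\fing)}\Ls{\bar\lam})\subset\overline{\mathbb{O}}_q$; by the theory of associated varieties of primitive ideals (Joseph, Borho--Brylinski, Vogan) this already forces $\bar\lam$ into a very restricted family of orbits under its integral Weyl group. The genuinely delicate step --- the one I expect to be hardest --- is to upgrade this containment to the precise equality with $Pr^k$: one must show that $\wVar(J_k)$ is in fact all of $\overline{\mathbb{O}}_q$, pin down the central characters occurring in $U(\fing)/J_k$, and identify the completely prime (``unipotent'') primitive ideals attached to $\overline{\mathbb{O}}_q$ that carry those central characters with the $\Ann_{U(\fing)}\Ls{\bar\lam}$, $\lam\in Pr^k$. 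That matching, not the reduction to Zhu's algebra, is where the real work lies.
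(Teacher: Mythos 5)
This theorem is not proved in the paper: it is cited from \cite{A12-2} and used as a black box throughout Sections \ref{section:varieties}--\ref{section:proof}. There is therefore no in-paper argument to compare your proposal against, and it would be a mistake to spend effort reconstructing one here rather than consulting the cited reference directly.

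That said, a few remarks on the proposal itself. The reduction to Zhu's algebra is the right opening move and is the one used in \cite{A12-2}: identifying $\Zhu(\Irr{k\Lam_0})\cong U(\fing)/J_k$ with $J_k$ the image of $\Zhu(N_k(\fing))$, observing that the lowest graded piece of $L(\lam)$ is $\Ls{\bar\lam}$, and translating the problem into the ideal-theoretic condition $J_k\subset\Ann_{U(\fing)}\Ls{\bar\lam}$. But from there the sketch does not constitute a proof. For the ``if'' direction, the claim that one can read off the absence of singular vectors in $L(\lam)$ from explicit Malikov--Feigin--Fuchs expressions for generators of $N_k(\fing)$ is not something that works uniformly: outside of $\mf{sl}_2$ the maximal ideal $N_k(\fing)$ does not admit such a clean description in terms of a finite list of MFF singular vectors, and the appeal to ``modular invariance of admissible characters'' is not a substitute for an argument. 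For the ``only if'' direction you candidly flag the real difficulty yourself --- the containment $\Var(J_k)\subset\overline{\mathbb{O}}_q$ alone does not pin down which primitive ideals contain $J_k$, and the passage from associated-variety constraints to the exact set $Pr^k$ (matching against completely prime primitive ideals with the correct central characters) is exactly the content that makes the theorem nontrivial. Listing ``Joseph, Borho--Brylinski, Vogan'' does not bridge that gap. So the proposal correctly identifies the framework in which the theorem should be proved, but both directions remain open; in particular the ``only if'' direction is, as you say, where the real work lies, and it is not done.

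One more small caution: the final sentence of the theorem (rationality of $L(k\Lam_0)$ in $\BGG$) does not follow merely from the classification plus Gorelik--Kac; one also needs to rule out extensions between distinct composition factors in $\BGG$, which is an additional step carried out in \cite{A12-2} (and, in the $W$-algebra analogue, occupies Section \ref{section:proof} of the present paper).
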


By Zhu's theorem,
the first statement of
Theorem \ref{Th:classification-of-simple-modules-over-admissible-affine}
is equivalent to that
$\Ls{\lam}$  with $\lam\in \dual{\finh}$ is a module over $\Zhu(\Vs{k})$
if and only if $ \lam+k\Lam_0\in Pr^k$.
On the other hand  by Duflo's theorem \cite{Duf77}
any primitive ideal of $U(\fing)$ is the annihilating ideal
of some irreducible highest weight module $\Ls{\lam}$.
Hence
Theorem \ref{Th:classification-of-simple-modules-over-admissible-affine}
implies the following.
\begin{Co}\label{Co:admisiblle}
Let $k$ be an admissible number
for $\affg$.
A simple $U(\fing)$-module
$M$
 is an 
$\Zhu(\Vs{k})$-module if and only if
$\on{Ann}_{U(\fing)}M=\on{Ann}_{U(\fing)}{L}_{\fing}({\bar \lam})$
for some 
$\lam\in Pr^k$.
\end{Co}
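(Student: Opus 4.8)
The plan is to read Corollary~\ref{Co:admisiblle} off the reformulation of Theorem~\ref{Th:classification-of-simple-modules-over-admissible-affine} recorded just above, together with Duflo's theorem; no new mechanism is needed. First I would note that $\Zhu(\Vs{k})$ is a quotient algebra of $\Zhu(\Vg{k})=U(\fing)$: the surjection $\Vg{k}\twoheadrightarrow\Vs{k}=\Vg{k}/N_k(\fing)$ of vertex algebras induces, by the right exactness of the functor $\Zhu$ (Lemma~\ref{Lem:FreZhu}), a surjection $U(\fing)=\Zhu(\Vg{k})\twoheadrightarrow\Zhu(\Vs{k})$; write $J\subset U(\fing)$ for its kernel, i.e.\ the image of $\Zhu(N_k(\fing))$ in $\Zhu(\Vg{k})$. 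A $U(\fing)$-module is then a $\Zhu(\Vs{k})$-module if and only if it is annihilated by $J$, so that for a \emph{simple} $U(\fing)$-module $M$ the property of being a $\Zhu(\Vs{k})$-module is equivalent to the inclusion $J\subset\on{Ann}_{U(\fing)}M$, a condition depending only on the primitive ideal $\on{Ann}_{U(\fing)}M$.

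Next I would invoke Duflo's theorem \cite{Duf77}: since $\on{Ann}_{U(\fing)}M$ is primitive, there is a weight $\mu\in\dual{\finh}$ with $\on{Ann}_{U(\fing)}M=\on{Ann}_{U(\fing)}\Ls{\mu}$, and because $\Ls{\mu}$ carries the same annihilator, $M$ is a $\Zhu(\Vs{k})$-module precisely when $\Ls{\mu}$ is. By the reformulation of Theorem~\ref{Th:classification-of-simple-modules-over-admissible-affine} recorded above --- itself obtained from Zhu's theorem \eqref{Zhus-theorem} applied to $\Vs{k}$ --- the module $\Ls{\mu}$ is a $\Zhu(\Vs{k})$-module if and only if $\mu+k\Lam_0\in Pr^k$. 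Setting $\lam:=\mu+k\Lam_0$, so that $\bar\lam=\mu$, this says exactly that $M$ is a $\Zhu(\Vs{k})$-module if and only if $\on{Ann}_{U(\fing)}M=\on{Ann}_{U(\fing)}\Ls{\bar\lam}$ for some $\lam\in Pr^k$. The converse inclusion is immediate: if $\lam\in Pr^k$ then $\Ls{\bar\lam}$ is itself a $\Zhu(\Vs{k})$-module, hence any simple $M$ sharing its annihilator is killed by $J$ and is therefore a $\Zhu(\Vs{k})$-module.

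I do not expect a genuine obstacle here: the real content is in the classification of \cite{A12-2} (Theorem~\ref{Th:classification-of-simple-modules-over-admissible-affine}) and in Duflo's theorem, both of which are quoted. The only points requiring a line of care are the passage from a level-$k$ affine weight $\lam\in Pr^k$ to its restriction $\bar\lam\in\dual{\finh}$, and the identification --- through \eqref{eq:def-of-Verma} and \eqref{Zhus-theorem} --- of the condition that $\Irr{\lam}$ be a $\Vs{k}$-module with the condition that $\Ls{\bar\lam}$ be a $\Zhu(\Vs{k})$-module; granting these, the corollary follows at once.
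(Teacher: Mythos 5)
Your argument is correct and follows exactly the same route as the paper: reformulate Theorem~\ref{Th:classification-of-simple-modules-over-admissible-affine} via Zhu's theorem as a statement about $\Zhu(\Vs{k})$-modules, observe that $\Zhu(\Vs{k})$ is a quotient of $U(\fing)$ so that being a $\Zhu(\Vs{k})$-module depends only on the annihilator, and then invoke Duflo's theorem to replace a general simple $M$ by a highest weight module $\Ls{\bar\lam}$. You spell out details the paper leaves implicit (the kernel $J$, the explicit converse), but the ideas and their sequencing are identical.
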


Let $k$ be   an admissible number  for $\affg$.
We shall  determine
\begin{align*}
 \Var\Zhu(\Vs{k}):=\Specm (\gr_F\Zhu(\Vs{k}))
(\cong \Specm (\gr_K\Zhu(\Vs{k}))),
\end{align*}
which is a $G$-invariant, conic, Poisson subvariety  of $\fing^*$.

Recall \cite{Ara12} 
that the associated variety
$X_V$  
of a finitely strongly generated vertex algebra $V$ 
is defined as
\begin{align*}
&X_{V}
=\Specm(R_{V}).
\end{align*}
Note that 
$V$ is $C_2$-cofinite if and only if $X_V$ is zero-dimensional.

By  
\eqref{eq:surjective-to-Zhu},
$\Var \Zhu(\Vs{k})$ is a subvariety of $X_{\Vs{k}}$,
which is also a $G$-invariant, conic, Poisson subvariety  of $\fing^*$.

Let us identify $\fing^*$ with $\fing $ through  $\nu$,
and 
let  $\mc{N}\subset \fing^*= \fing$ be  the nilpotent cone.

By a conjecture of Feigin and Frenkel proved in \cite{Ara09b}
we have 
\begin{align*}
X_{\Vs{k}}\subset \Nil\quad\text{for an admissible number $k$ for $\affg$.}
\end{align*}
In fact the following holds:
\begin{Th}[\cite{Ara09b}]\label{Th:vareity-of-admissible-affine}
Let $k$ be an 
admissible number for $\affg$.
Then
$X_{\Vs{k}}$ is an irreducible subvariety of $\Nil$
which depends only on the denominator $q$ of $k$, that is,
 there exist a nilpotent element $f_q$ of $\fing$
such that
\begin{align*}
X_{\Irr{k\Lam_0}}=\overline{\Ad G.f_q}.
\end{align*}
More explicitly, we have
\begin{align*}
 X_{\Vs{k}}
= \begin{cases}
  \{x\in \fing|  (\ad x)^{2q}=0\}& \text{if }(q,r\che)=1,\\
\{x\in \fing|  \pi_{\theta_s}(x)^{2q/r\che}=0\}
& \text{if }(q,r\che)=r\che,
 \end{cases}&
\end{align*}
where
$\theta_s$ is the highest short root of $\fing$
and
$\pi_{\theta_s}:\fing\ra \End_{\C}(L_{\fing}(\theta_s))$ 
is the finite dimensional irreducible representation of $\fing$
with highest weight $\theta_s$.
 \end{Th}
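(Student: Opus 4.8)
The plan is to compute the commutative ring $R_{\Vs{k}}=\Vs{k}/C_2(\Vs{k})$ directly and read off its maximal spectrum. Since $\Vs{k}=\Vg{k}/N_k(\fing)$, the isomorphism \eqref{eq:vareity-of-vgk} exhibits $R_{\Vs{k}}$ as the quotient of $\C[\fing^*]\cong R_{\Vg{k}}$ by the image $\bar N_k$ of the maximal submodule $N_k(\fing)\subset\Vg{k}$. Because $N_k(\fing)$ is an ideal of the vertex algebra $\Vg{k}$ and the Poisson action of $\fing\subset\affg$ on $R_{\Vg{k}}$ is the adjoint action, $\bar N_k$ is a $G$-stable Poisson ideal; hence $X_{\Vs{k}}$ is a $G$-invariant conic Poisson subvariety of $\fing^*\cong\fing$, and everything reduces to identifying its zero locus $V(\bar N_k)$. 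Note that once $X_{\Vs{k}}\subseteq\Nil$ is known, $X_{\Vs{k}}$ is automatically a finite union of nilpotent orbit closures.

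For the inclusion $X_{\Vs{k}}\subseteq\overline{\Ad G.f_q}$ I would use the explicit generator of $N_k(\fing)$. For an admissible level $k$ with denominator $q$ the maximal submodule is generated by a single singular vector $\sigma_k$ (its weight located by the Kac--Kazhdan / Malikov--Feigin--Fuks analysis, single-generation coming from the structure theory of admissible representations); for $q=1$ this is the classical $\sigma_k=(e_\theta t\inv)^{k+1}\1$, with $\theta$ the highest root. Its principal symbol $\bar\sigma_k\in\C[\fing^*]$ is a highest weight vector spanning, under the $G$-action, a single irreducible submodule $V_k\subset\C[\fing^*]$ of an explicit highest weight recording $p$, $q$ and the dichotomy $(q,r\che)=1$ versus $(q,r\che)=r\che$. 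Since $\bar N_k\supseteq V_k\cdot\C[\fing^*]$, one gets $X_{\Vs{k}}\subseteq V(V_k)$, and the geometric input is that $V(V_k)$ is exactly the orbit closure in the statement: $\{x;(\ad x)^{2q}=0\}$ (resp.\ $\{x;\pi_{\theta_s}(x)^{2q/r\che}=0\}$) is the closure of a single nilpotent orbit $\Ad G.f_q$ --- the one on which the relevant nilpotency degree first reaches its bound --- and such an orbit closure is cut out in $\fing^*$ by the $G$-submodule generated by one highest weight vector of exactly the weight of $\bar\sigma_k$. In particular $X_{\Vs{k}}\subseteq\Nil$ (since $(\ad x)^{2q}=0$, resp.\ $\pi_{\theta_s}(x)^{2q/r\che}=0$, already forces $x$ nilpotent), so the Feigin--Frenkel conjecture drops out as a by-product.

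For the reverse inclusion $X_{\Vs{k}}\supseteq\overline{\Ad G.f_q}$ I would combine Theorems \ref{Th:Arakawa} and \ref{Th:vanishing-Poisson}: as $\Vs{k}\in\KL_k$, they give $X_{H^0_{f_q}(\Vs{k})}=X_{\Vs{k}}\cap\mc{S}_{f_q}$. Now $H^0_{f_q}(\Vs{k})$ is nonzero --- it is the corresponding minimal series $W$-algebra, as established in \cite{FKW92,Ara07} for $f_q=f_{prin}$, and in general its non-vanishing can be read off its Euler--Poincar\'e character --- so $X_{\Vs{k}}\cap\mc{S}_{f_q}\ne\emptyset$; since the Slodowy slice $\mc{S}_{f_q}$ (being contracted to $f_q$ by the Kazhdan $\C^*$-action, which preserves the conic $G$-stable set $\overline{\mathbb{O}}$) meets a nilpotent orbit $\mathbb{O}$ only when $f_q\in\overline{\mathbb{O}}$, this forces $\overline{\Ad G.f_q}\subseteq X_{\Vs{k}}$. (Alternatively one can lower-bound $\dim X_{\Vs{k}}$ by $\dim\overline{\Ad G.f_q}$, either from the asymptotic growth of $\dim(\Vs{k})_d$ given by the Kac--Wakimoto character formula, or from the Gelfand--Kirillov dimension of a $\Vs{k}$-module $\Irr{\lam}$, $\lam\in Pr^k$, via Theorem \ref{Th:classification-of-simple-modules-over-admissible-affine} and the associated-variety theory of primitive ideals.) With the upper bound this gives $X_{\Vs{k}}=\overline{\Ad G.f_q}$, irreducible and depending only on $q$.

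The main obstacle is the case-dependent bookkeeping underlying both bounds: determining $\sigma_k$ and its symbol $\bar\sigma_k$ for every admissible $k$ (with the $(q,r\che)$ split), computing the defining ideal of $\overline{\Ad G.f_q}$ and checking it is generated by the $G$-module of one highest weight vector of the matching weight, and --- on the other side --- establishing the non-vanishing of $H^0_{f_q}(\Vs{k})$ (or the character-growth estimate). Identifying the nilpotent $f_q$ attached to the denominator $q$ and matching its orbit description with the conditions $(\ad x)^{2q}=0$ and $\pi_{\theta_s}(x)^{2q/r\che}=0$ is the delicate point where the combinatorics of admissible weights meets the geometry of nilpotent orbits.
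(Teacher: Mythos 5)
This statement is quoted from \cite{Ara09b}; the present paper cites it without proof, so there is no internal argument to compare against. That said, your outline does capture the actual strategy from that reference at a coarse level (upper bound from the symbol of a singular vector via $R_{\Vg{k}}\cong\C[\fing^*]$, then an opposing lower bound, with the set $\{x:(\ad x)^{2q}=0\}$ recognised as an orbit closure and the Feigin--Frenkel containment $X_{\Vs{k}}\subset\Nil$ dropping out). But the outline has real gaps beyond ``bookkeeping.''

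First, you assert that $N_k(\fing)$ is generated by a single singular vector $\sigma_k$ with a clean symbol $\bar\sigma_k$. Single generation of the maximal submodule at admissible level is not justified (and is delicate for $q>1$); what is actually needed, and what is the entire content of the result, is an explicit singular vector whose symbol equals $e_\theta^{2q}$ (resp.\ $e_{\theta_s}^{2q/r\che}$) and the nontrivial fact that the $G$-ideal it generates cuts out precisely $\{x:(\ad x)^{2q}=0\}$ (resp.\ the $\pi_{\theta_s}$ version). You defer exactly this as ``the main obstacle.'' Second, your lower bound is circular as stated within this paper's logical dependencies: the non-vanishing of $H^0_{f_q}(\Vs{k})$ for general $q$ is Theorem \ref{Th:C2-cofiniteness}, which the paper \emph{derives from} Theorem \ref{Th:vareity-of-admissible-affine} via $X_{\BRS{0}{\Vs{k}}}\cong X_{\Vs{k}}\cap\mc{S}_{f_q}$ and the transversality of the slice. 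You gesture at alternative routes (Euler--Poincar\'e characters of the reduction complex, GK-dimension of $\Irr{\lam}$ with $\lam\in Pr^k$ and associated varieties of primitive ideals) --- these can indeed break the circle and are in the spirit of the real argument --- but none is developed, and the Euler-characteristic route for non-principal $f$ requires a convergence/non-vanishing analysis of its own. Also note $f_q$ is only \emph{defined} by the formula you are trying to prove, so the lower-bound step must be phrased as: identify the unique dense orbit in the upper-bound variety and then show the slice through that orbit meets $X_{\Vs{k}}$. As a proof, then, this is a plausible plan with the two load-bearing steps (symbol identification, independent non-vanishing/dimension input) left open.
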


Theorem \ref{Th:vareity-of-admissible-affine}
has the following important consequence \cite{Ara09b}:
By Theorems 
\ref{Th:vanishing-Poisson} and \ref{Th:Arakawa}
we have
\begin{align}
 X_{\BRS{0}{\Vs{k}}}\cong X_{\Vs{k}}\cap \mc{S}_f.
\label{eq:associated-vareity-of-W}
\end{align}
Hence the transversality of $\mc{S}_f$ with $G$-orbits 
(see \cite{GanGin02})
 implies the following:
 \begin{Th}[\cite{Ara09b}]\label{Th:C2-cofiniteness}
Let $k$ be an admissible number with denominator $q$.
Then
the vertex algebra
$H_{f_q}^0(\Irr{k\Lam_0})$ 
is a non-zero $C_2$-cofinite
quotient of $\Wg{k}$.
 \end{Th}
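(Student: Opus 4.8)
The plan is to deduce $C_2$-cofiniteness from the fact that the associated variety $X_{H^0_{f_q}(\Vs{k})}$ is a single point, using that for a finitely strongly generated vertex algebra $C_2$-cofiniteness is equivalent to zero-dimensionality of its associated variety. Since $\Vs{k}\in\KL_k$, Theorem \ref{Th:Arakawa}(ii), applied with $f=f_q$, identifies the $C_2$-algebra $R_{H^0_{f_q}(\Vs{k})}$ with $H^0_{f_q}(R_{\Vs{k}})$ as Poisson algebras over $\C[\mc{S}_{f_q}]$. Now $R_{\Vs{k}}$ is a quotient algebra of $R_{\Vg{k}}=\C[\fing^*]$, hence a finitely generated $\C[\fing^*]$-module lying in $\overline{\HC}$, so the support formula of Theorem \ref{Th:vanishing-Poisson} gives
\begin{align*}
X_{H^0_{f_q}(\Vs{k})}=\on{supp}_{\C[\mc{S}_{f_q}]}H^0_{f_q}(R_{\Vs{k}})=\mc{S}_{f_q}\cap\on{supp}_{\C[\fing^*]}R_{\Vs{k}}=\mc{S}_{f_q}\cap X_{\Vs{k}}.
\end{align*}

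The second step is geometric. By Theorem \ref{Th:vareity-of-admissible-affine}, identifying $\fing^*$ with $\fing$ via $\nu$, we have $X_{\Vs{k}}=\overline{\Ad G.f_q}$. I would then invoke the transversality of the Slodowy slice $\mc{S}_{f_q}$ to every adjoint orbit (Gan--Ginzburg, \cite{GanGin02}): for an orbit $\mc{O}\subset\fing$ the intersection $\mc{S}_{f_q}\cap\mc{O}$, when nonempty, is smooth of dimension $\dim\mc{S}_{f_q}+\dim\mc{O}-\dim\fing$, so it is empty as soon as this number is negative. Since $\dim\mc{S}_{f_q}=\dim\fing-\dim\Ad G.f_q$, this forces $\mc{S}_{f_q}\cap\mc{O}=\emptyset$ whenever $\dim\mc{O}<\dim\Ad G.f_q$, while $\mc{S}_{f_q}$, being Slodowy's transverse slice through $f_q$, meets $\Ad G.f_q$ only at $\chi=\nu(f_q)$. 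As $\overline{\Ad G.f_q}$ is the union of $\Ad G.f_q$ with orbits of strictly smaller dimension, we conclude $\mc{S}_{f_q}\cap X_{\Vs{k}}=\{\chi\}$, which is zero-dimensional; therefore $H^0_{f_q}(\Vs{k})$ is $C_2$-cofinite.

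Finally, $H^0_{f_q}(\Vs{k})\ne 0$: since $f_q\in\Ad G.f_q\subset X_{\Vs{k}}$ and $f_q\in\mc{S}_{f_q}$, the point $\chi$ lies in $X_{H^0_{f_q}(\Vs{k})}$, so $R_{H^0_{f_q}(\Vs{k})}\ne 0$ and hence $H^0_{f_q}(\Vs{k})\ne 0$; and $H^0_{f_q}(\Vs{k})$ is a quotient vertex algebra of $\Wg{k}$ by the isomorphism $H^0_{f_q}(\Vs{k})\cong\Wg{k}/H^0_{f_q}(N_k(\fing))$ recorded at the end of Section \ref{section:W-algebras}, which itself comes from applying the exactness in Theorem \ref{Th:Arakawa}(i) to $0\to N_k(\fing)\to\Vg{k}\to\Vs{k}\to 0$. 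The argument is essentially a direct corollary of the results already in place; the one point deserving care is the dimension bookkeeping in the second paragraph, i.e.\ verifying that no boundary orbit of $\overline{\Ad G.f_q}$ can meet the transverse slice, but this is forced by transversality together with the equality $\dim\mc{S}_{f_q}=\dim\fing-\dim\Ad G.f_q$, so I do not expect a serious obstacle.
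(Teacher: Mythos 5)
Your proof is correct and takes essentially the same route as the paper: the paper, too, derives this directly from Theorems \ref{Th:vanishing-Poisson}, \ref{Th:Arakawa}, and \ref{Th:vareity-of-admissible-affine} together with the transversality of the Slodowy slice to adjoint orbits. The only difference is that the paper states the chain $X_{H^0_{f_q}(\Vs{k})}\cong X_{\Vs{k}}\cap\mc{S}_{f_q}$ and invokes transversality in two lines, whereas you spell out the support-theoretic identifications and the dimension bookkeeping explicitly; your details are accurate.
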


Now we are in a position to state the main result of this section.
\begin{Th}\label{Th:variety-of-Zhu's-algebra}
 Let $k$ be an admissible number for $\affg$
with denominator $q$ .
We have  an isomorphism of affine varieties
\begin{align*}
\Var \Zhu(\Irr{k\Lam_0})
\cong  X_{\Vs{k}}.
\end{align*}
\end{Th}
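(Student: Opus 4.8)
The plan is to prove the two opposite inclusions of subvarieties of $\fing^*\cong\fing$. On one side, the surjection $\pi_{\Vs{k}}\colon R_{\Vs{k}}\twoheadrightarrow\gr_F\Zhu(\Vs{k})$ of \eqref{eq:surjective-to-Zhu} realizes $\Var\Zhu(\Vs{k})$ as a closed subvariety of $X_{\Vs{k}}$, which by Theorem \ref{Th:vareity-of-admissible-affine} equals $\overline{\Ad G.f_q}$; as noted before the statement, $\Var\Zhu(\Vs{k})$ is moreover $G$-invariant and conic. Hence $\Var\Zhu(\Vs{k})$ is a $G$-stable closed subvariety of the closure of a single nilpotent orbit, and to obtain the reverse inclusion it suffices, by $G$-invariance and closedness, to prove that $f_q\in\Var\Zhu(\Vs{k})$.

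To locate $f_q$ inside $\Var\Zhu(\Vs{k})$ I would cut by the Slodowy slice $\mc{S}_{f_q}$ and apply the reduction functor $H^0_{f_q}$. The bimodule $\Zhu(\Vs{k})$ is a finitely generated object of $\HC$ by Lemma \ref{Lem:Zhu's-bimodules-of-objects-of-KL}(iii), carrying the good Kazhdan filtration $K_\bullet\Zhu(\Vs{k})$ by Lemma \ref{Lem;Zhu-p-is-good}(ii), so Theorem \ref{Th:vanishing-finite-dimensional}(i) gives
\begin{align*}
\Var H^0_{f_q}(\Zhu(\Vs{k}))=\Var\Zhu(\Vs{k})\cap\mc{S}_{f_q},
\end{align*}
while Theorem \ref{Th:Zhu-algebra-correspondence} identifies the left-hand side with $\Var\Zhu(H^0_{f_q}(\Vs{k}))$. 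Since $H^0_{f_q}(\Vs{k})$ is a nonzero vertex algebra (Theorem \ref{Th:C2-cofiniteness}), it admits a simple graded module, which by Zhu's theorem arises from a simple module over $\Zhu(H^0_{f_q}(\Vs{k}))$; hence $\Zhu(H^0_{f_q}(\Vs{k}))\neq 0$, so $\gr_K\Zhu(H^0_{f_q}(\Vs{k}))\neq 0$ and $\Var\Zhu(H^0_{f_q}(\Vs{k}))\neq\emptyset$. Therefore $\Var\Zhu(\Vs{k})\cap\mc{S}_{f_q}\neq\emptyset$. But the transversality of the Slodowy slice forces $\mc{S}_{f_q}$ to meet only $G$-orbits of dimension $\geq\dim\Ad G.f_q$, and the only such orbit lying in $\overline{\Ad G.f_q}$ is $\Ad G.f_q$ itself, which intersects $\mc{S}_{f_q}$ exactly in $\{f_q\}$. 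Thus $\mc{S}_{f_q}\cap\overline{\Ad G.f_q}=\{f_q\}$, so $f_q\in\Var\Zhu(\Vs{k})$, giving $\Var\Zhu(\Vs{k})=X_{\Vs{k}}$; since this equality holds already at the level of the underlying reduced subschemes of $\fing^*$, it is in particular an isomorphism of affine varieties.

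I do not expect a serious obstacle here: once the machinery of the previous sections is granted, the argument is essentially bookkeeping organized around Theorems \ref{Th:Zhu-algebra-correspondence} and \ref{Th:vanishing-finite-dimensional}. The points requiring care are that the filtrations $F_\bullet$ and $K_\bullet$ on $\Zhu(\Vs{k})$ define the same variety (recorded in the definition of $\Var\Zhu(\Vs{k})$ above) and the nonvanishing of $\Zhu(H^0_{f_q}(\Vs{k}))$; the only genuinely geometric input is the single-point intersection $\mc{S}_{f_q}\cap\overline{\Ad G.f_q}=\{f_q\}$, which is precisely the transversality fact already invoked to pass from Theorem \ref{Th:vareity-of-admissible-affine} to Theorem \ref{Th:C2-cofiniteness}.
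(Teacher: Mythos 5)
Your proof is correct and follows the paper's own strategy: reduce to showing $f_q\in\Var\Zhu(\Vs{k})$, cut by the Slodowy slice, and combine Theorems \ref{Th:Zhu-algebra-correspondence}, \ref{Th:vanishing-finite-dimensional} and \ref{Th:C2-cofiniteness} to see the intersection $\Var\Zhu(\Vs{k})\cap\mc{S}_{f_q}$ is nonempty. The only cosmetic difference is in the last geometric step, where the paper uses that this intersection is a nonempty $\C^*$-stable subset of $\mc{S}_{f_q}$ and the Kazhdan $\C^*$-action is contracting to $f_q$, whereas you invoke the equivalent Gan--Ginzburg transversality fact $\overline{\Ad G.f_q}\cap\mc{S}_{f_q}=\{f_q\}$.
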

\begin{proof}
By Theorem \ref{Th:vareity-of-admissible-affine},
it is sufficient to show the following assertion.
\begin{Pro}\label{Pro:has-nilp-ortbis}
 Let $f$ be any nilpotent element of $\fing$,
and let $k$ be any complex number.
The following conditions are equivalent:
\begin{enumerate}
 \item $X_{L(k\Lam_0)}\supset \overline{\Ad G.f}$.
\item $\Var (A(L(k\Lam_0)))
\supset \overline{\Ad G.f}$.
\end{enumerate}
\end{Pro}
\begin{proof}
 Clearly (ii) implies (i) as
$\Var A(L(k\Lam_0)))\subset X_{L(k\Lam_0)}$.
Conversely,
suppose that
$X_{L(k\Lam_0)}\supset \overline{\Ad G.f}$.
Since $\Var \Zhu(\Vs{k})$ is  $G$-invariant
and
closed 
it is sufficient to show that
the point $f\in \fing=\fing^*$ is contained in
$  \Var \Zhu(\Vs{k})$.
By \eqref{eq:associated-vareity-of-W},
$X_{H^0_f(L(k\Lam_0))}$ contains  $f$, and hence,
$H^0_f(L(k\Lam_0))
$ is nonzero.
It follows that
$A(H^0_f(L(k\Lam_0)))=H^0_f(A(L(k\Lam_0)))$ is nonzero as well. 
Since
$\Var H^0_f(A(L(k\Lam_0)))
=\Var A(L(k\Lam_0))
\cap \mc{S}_f$
by Theorem \ref{Th:vanishing-finite-dimensional} (i),
$\Var A(L(k\Lam_0))$ intersects
$\mc{S}_f$ non-trivially.
As
$\Var H^0_f(A(L(k\Lam_0)))$ is invariant 
under 
the natural $\C^*$-action on $\mc{S}_{f}$ 
which is contracting to $f$
(see \cite{Gin08}),
$\Var \Zhu(\Vs{k})$  must contain the point $f$ as required.
\end{proof}
\end{proof}
 \begin{Conj}
For a finitely strongly generated simple vertex operator algebra
$V$
of  CFT type
we have
$ \Var \Zhu(V)(:=\Specm \gr_F(\Zhu(V)))\cong  X_V$.
 \end{Conj}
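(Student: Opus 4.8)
The natural approach is to reduce the statement to a single algebraic claim: that the surjective homomorphism of graded Poisson algebras $\pi_V\colon R_V\twoheadrightarrow\gr_F\Zhu(V)$ from \eqref{eq:surjective-to-Zhu} has nilpotent kernel. Indeed, surjectivity of $\pi_V$ already gives a closed embedding $\Var\Zhu(V)\hookrightarrow X_V$, so the content is that this embedding is surjective on closed points, which, $X_V$ being reduced, is equivalent to $\ker\pi_V\subseteq\sqrt 0$. Since $\pi_V$ is a Poisson homomorphism intertwining the $\C^*$-actions defined by the conformal gradings, $\ker\pi_V$ is a conic Poisson ideal, so $\Var\Zhu(V)$ is a closed conic Poisson subvariety of $X_V$; hence it suffices to prove that $\Var\Zhu(V)$ contains every irreducible component $C$ of $X_V$, i.e.\ that $\dim\bigl(\Var\Zhu(V)\cap C\bigr)=\dim C$ for each such $C$.

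It is convenient first to record the trivial case: if $V$ is $C_2$-cofinite, then $R_V$ and $\gr_F\Zhu(V)$ are finite dimensional, so $X_V$ and $\Var\Zhu(V)$ are finite conic sets, hence both equal the single point $\{0\}$, and they are nonzero since $V$ is simple; so the conjecture holds and refines nonvanishing statements such as Theorem~\ref{Th:C2-cofiniteness}. For the general case the plan is to build $\Var\Zhu(V)$ out of the associated varieties of the simple $V$-modules. For any simple positively graded $V$-module $L$ — including $V$ itself, which is positively graded since it is of CFT type — Lemma~\ref{Lem:surjective-hom-ZHu's-bimodule} provides a surjection $\pi_L\colon L/C_2(L)\twoheadrightarrow\gr_F\Zhu(L)$ of Poisson $R_V$-modules; as $\gr_F\Zhu(L)$ is a module over $\gr_F\Zhu(V)$ this yields $\on{supp}_{R_V}\gr_F\Zhu(L)\subseteq X_L\cap\Var\Zhu(V)$, where $X_L:=\on{supp}_{R_V}(L/C_2(L))\subseteq X_V$. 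I would then pursue two subgoals: (a) prove that $\on{supp}_{R_V}\gr_F\Zhu(L)=X_L$ whenever $L$ is simple — a characteristic-variety statement for the filtered algebra $\Zhu(V)$, to be attacked via Zhu's correspondence \eqref{Zhus-theorem} by passing to the simple $\Zhu(V)$-module $E$ with $L=L_V(E)$, choosing a good filtration, and invoking a Gabber-type involutivity theorem together with the geometric input that $X_L$ is the closure of a single symplectic leaf of $X_V$ when $L$ is simple; and (b) show that the varieties $X_L$, as $L$ runs over the simple modules, cover $X_V$ — which, when $X_V$ has only finitely many symplectic leaves, reduces to exhibiting for each leaf closure a simple module with that associated variety. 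Granting (a) and (b), the displayed inclusions give $X_V\subseteq\Var\Zhu(V)$.

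The hard part — and the reason the statement is only a conjecture — is that all three ingredients above are currently available only in the reductive situations treated in this paper. Step (b) requires a supply of simple modules with prescribed associated varieties, known for admissible affine vertex algebras and their Drinfeld--Sokolov reductions via Theorems~\ref{Th:variety-of-Zhu's-algebra} and \ref{Th:Zhu-algebra-correspondence} and the classification of \cite{A12-2}, but unknown for a general $V$; step (a) requires both an involutivity theorem and the leaf-closure description of $X_L$, neither of which is established in general; and, most seriously, for the \emph{dense} leaf of $X_V$ the relevant simple module is essentially $V$ itself, so there (a) degenerates back into the claim that $\ker\pi_V$ is nilpotent — exactly what one is trying to prove. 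Likewise the reduction in the first paragraph presupposes that $X_V$ admits no proper full-dimensional conic Poisson subscheme, which is precisely where the simplicity of $V$ must enter. Breaking this circularity seems to require genuinely new input — for instance a direct comparison of the associated graded of $V$ for Li's filtration (the arc space of $X_V$) with $\gr_F\Zhu(V)$, or a general theorem to the effect that $X_V$ has a dense symplectic leaf whenever $V$ is simple.
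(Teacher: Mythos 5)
There is nothing for your proposal to be checked against: the statement you were given is stated in the paper only as a \emph{Conjecture}, with no proof offered, and your text is likewise not a proof --- as you yourself say, the ``hard part'' (your steps (a) and (b), and the non-existence of a proper full-dimensional conic Poisson subvariety of $X_V$) is left entirely open, and at the dense stratum your strategy collapses back into the assertion that $\ker\pi_V$ is nilpotent, which is the conjecture itself. So the gap is not a local flaw in an otherwise complete argument; it is the whole content of the statement. What you do establish correctly is only the easy part that the paper also takes for granted: \eqref{eq:surjective-to-Zhu} gives a closed embedding $\Var \Zhu(V)\hookrightarrow X_V$ of conic Poisson subvarieties, the conjecture is equivalent to $\ker \pi_V\subseteq \sqrt{0}$, and the $C_2$-cofinite case is trivial since both sides are the reduced point $\{0\}$.

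That said, your outline is faithful to how the paper handles the one case it actually proves, namely $V=\Vs{k}$ for $k$ admissible (Theorem \ref{Th:variety-of-Zhu's-algebra}): there the paper uses $G$-invariance and the irreducibility of $X_{\Vs{k}}=\overline{\Ad G.f_q}$ (Theorem \ref{Th:vareity-of-admissible-affine}) to reduce to showing $f_q\in \Var\Zhu(\Vs{k})$, and then gets this from $H^0_{f_q}(\Zhu(\Vs{k}))\cong \Zhu(H^0_{f_q}(\Vs{k}))\ne 0$ (Theorems \ref{Th:Zhu-algebra-correspondence} and \ref{Th:C2-cofiniteness}), the support formula $\Var H^0_{f_q}(\Zhu(\Vs{k}))=\Var\Zhu(\Vs{k})\cap\mc{S}_{f_q}$ of Theorem \ref{Th:vanishing-finite-dimensional}, and the contracting $\C^*$-action on the Slodowy slice --- exactly the kind of ``produce enough points of $\Var\Zhu(V)$ inside $X_V$'' mechanism you describe in (a)--(b). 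Your diagnosis of why this does not generalize (no supply of simple modules with prescribed associated varieties, no general involutivity/leaf-closure statement, no general density of a symplectic leaf for simple $V$) is accurate, but it is a research program, not a proof, and it should not be presented as one.
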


Note that
Conjecture 1 in particular implies the widely believed fact that
a finitely strongly generated rational vertex operator algebra
of CFT type must be $C_2$-cofinite.

\section{Proof of Main Theorem}
\label{section:proof}
In this section we let $f=f_{prin}$,
a principal nilpotent element of $\fing$,
 \begin{align*}
&\W^k(\fing)=\W^k(\fing,f_{prin})=H^0_{f_{prin}}(\Vg{k}),\\
\text{and }&\W_k(\fing)=\text{ the unique simple quotient of }\W^k(\fing)
 \end{align*}
as in Introduction.
The vertex algebra
$\W^k(\fing)$ is
$\Z_{\geq 0}$-graded
by $L_0$,
where
\begin{align*}
 Y(\omega_{\W},z)=\sum_{n\in \Z}L_n z^{-n-2}.
\end{align*}
The central charge 
$c(k)$ of $\W^k(\fing)$ is 
 given in Introduction.
We have  
the isomorphisms
\begin{align*}
& \C[\fing^*]^{G}\isomap  \C[\mc{S}_f]=H^0(\bar C(\C[\fing^*]),\ad \bar d)\cong
R_{\W^k(\fing)},
\quad p\mapsto p\* 1,\\
& \mc{Z}(\fing)\isomap  U(\fing,f_{prin})=H^0(C(U(\fing)),\ad d)\cong
\Zhu(\W^k(\fing)),
\quad z\mapsto z\* 1
\end{align*}
(\cite{Kos78}, see also \cite{Ara07}),
where $\mc{Z}(\fing)$ denotes the
center of $U(\fing)$.
We will identity $\Zhu(\W^k(\fing))$ with $\mc{Z}(\fing)$
through the above isomorphism.

For a central character $\gamma:\mc{Z}(\fing)\ra \C$,
let $\C_{\gamma}$ be the one-dimensional representation of $\mc{Z}(\fing)$
defined by $\gamma$.
Put
\begin{align*}
 \VermaW{\gamma}=M_{\W^k(\fing)}(\C_{\gamma}),
\quad  \IrrW{\gamma}=L_{\W^k(\fing)}(\C_{\gamma})
\end{align*}
(see section \ref{section:Zhu-algberas}).
We have
\begin{align*}
\on{Irr}(\W^k(\fing))=\{\IrrW{\gamma_{\lam}}| 
\lam\in \dual{\finh}/W-\rho \},
\end{align*}
where
 $\gamma_{ \lam}: \mc{Z}(\fing)\ra \C$ is
the
evaluation at 
$\Ms{ \lam}$.
Note that
\begin{align*}
\W_k(\fing)\cong \IrrW{\gamma_{-(k+h\che_{\fing})\rho\che}},
 \end{align*}
see \cite[5.4]{Ara07}.

 \begin{Th}\label{Th:classification.general}
Let
$N$ be an ideal of $\Vg{k}$
and 
suppose that
$H_{f_{prin}}^0(\Vg{k}/N)\ne 0$,
so that
$H_{f_{prin}}^0(\Vg{k}/N)$
is a quotient vertex algebra 
of $\W^k(\fing)$ (see \eqref{eq:general-quotient}).
We have
\begin{align*}
\on{Irr}(H_{f_{prin}}^0(\Vg{k}/N))=
 \{\IrrW{\gamma}|  U(\fing)\ker\gamma\supset J_N
\}
\end{align*}
(Here $J_N$ is defined in Section \ref{section:Zhu'ss-bimodules-for-W},
see \eqref{eq:zhu-of-quotient}).
 \end{Th}
  \begin{proof}
Recall 
Skryabin's equivalence for $f=f_{prin}$ in Section 
\ref{section:finite W-algebras}:
\begin{align*}
\mc{Z}(\fing)\on{-Mod}\isomap \mc{C},\quad
E\mapsto Y\*_{\mc{Z}(\fing)}E,
\end{align*}
which goes back to Kostant \cite{Kos78}.
In particular,
$\{Y_{\gamma}|  \gamma\in \finh^*/W-\rho\}$ gives the complete set of isomorphism
classes of simple object of $\mc{C}$,
where $Y_{\gamma}=Y\*_{\mc{Z}(\fing)}\C_{\gamma}$.
We have  \cite{Kos78}
\begin{align*}
\Ann_{U(\fing)} Y_{\gamma}=U(\fing)\ker \gamma.
\end{align*}
Therefore
$Y_{\gamma}$ is annihilated by $J_N$ if and only if
$J_N\subset U(\fing)\ker \gamma$.
In other words
$\{Y_{\gamma}|  U(\fing)\ker \gamma\supset J_N
\}$ gives the complete  set of
isomorphism classes of
simple objects of $\mc{C}^{J_N}$.
By Theorem \ref{Th:equivalence}
this is equivalent to that
\begin{align*}
\on{Irr}(A(\BRS{0}{\Vg{k}/N}))
=\{\C_{\gamma}|  U(\fing)\ker\gamma\supset J_N
\}.
\end{align*}
This completes the proof.
  \end{proof}

Recall that
$X_{\Vs{k}}\subset \Nil$
for
an 
admissible number $k$
for $\affg$
  (Theorem \ref{Th:vareity-of-admissible-affine}).
An admissible  number $k$ 
is called {\em non-degenerate}
if 
\begin{align*}
X_{\Irr{k\Lam_0}}=\Nil=
\overline{\Ad G.f_{prin}}.
\end{align*}
From Theorem
  \ref{Th:vareity-of-admissible-affine}
and the fact that
\begin{align}
(\theta|\rho\che)=h_{\fing}-1,\quad (\theta_s|\rho\che)=h_{{}^L{\fing}}\che-1,
\label{eq:hche-dualhche}
\end{align}
where $\theta$ is the highest root of $\fing$,
it follows that
an admissible number  $k$ is
non-degenerate if and only if 
$k$ satisfies 
\begin{align*}
q\geq \begin{cases}
	      h_{\fing}&\text{if }(q,r\che)=1,\\
r\che  h_{{}^L\fing}
\che&\text{if }(q,r\che)=r\che,
	     \end{cases}
\end{align*}
where
$q$ is the denominator of $k$,
that is,
$k$ is of the form (\ref{eq:form-of-k}).

 \begin{Th}\label{Th:simple=image-of-simple}
Let $k$ be an admissible number for $\affg$.
Then
$H_{f_{prin}}^0(\Vs{k})
\ne 0 $
if and only if 
$k$ is non-degenerate.
If this is the case
then
\begin{align*}
H_{f_{prin}}^0(\Vs{k})\cong \W_k(\fing).
\end{align*}
Moreover,
 $\W_k(\fing)$ is $C_2$-cofinite.
 \end{Th}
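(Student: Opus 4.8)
The plan is to deduce all three assertions from the associated-variety computations of Section~\ref{section:varieties}, using that $\BRS{0}{\Vs{k}}\cong \W^k(\fing)/\BRS{0}{N_k(\fing)}$ is a $\Z_{\geq 0}$-graded quotient of $\W^k(\fing)$ whose degree-zero part is spanned by the image of the vacuum. First I would settle the non-vanishing. Such a quotient is non-zero if and only if its $C_2$-algebra $R_{\BRS{0}{\Vs{k}}}$ is non-zero, i.e.\ $X_{\BRS{0}{\Vs{k}}}\ne\emptyset$. By Theorems~\ref{Th:vanishing-Poisson} and~\ref{Th:Arakawa} we have $X_{\BRS{0}{\Vs{k}}}\cong X_{\Vs{k}}\cap \mc{S}_{f_{prin}}$, and by Theorem~\ref{Th:vareity-of-admissible-affine} the variety $X_{\Vs{k}}=\overline{\Ad G.f_q}$ is the closure of a single nilpotent orbit. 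Since $\mc{S}_{f_{prin}}$ is transverse to the adjoint orbits and the contracting $\C^*$-action on $\mc{S}_{f_{prin}}$ takes it to its base point $\chi=\nu(f_{prin})$ while preserving the conic, $G$-stable set $\overline{\Ad G.f_q}$ (see~\cite{GanGin02}), a limit argument shows that $X_{\Vs{k}}\cap \mc{S}_{f_{prin}}\ne\emptyset$ precisely when $f_{prin}\in\overline{\Ad G.f_q}$, i.e.\ when $\overline{\Ad G.f_q}=\Nil$ (the regular orbit being the unique dense one). As recorded just before the theorem, via \eqref{eq:hche-dualhche} and Theorem~\ref{Th:vareity-of-admissible-affine} this last condition is exactly non-degeneracy of $k$, which proves the equivalence.

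Next, assume $k$ is non-degenerate, so that $f_q$ is a regular nilpotent and $\BRS{0}{\Vs{k}}$ is a non-zero quotient of $\W^k(\fing)=\W^k(\fing,f_{prin})$. For the isomorphism $\BRS{0}{\Vs{k}}\cong\W_k(\fing)$ I would invoke the theorem of~\cite{Ara07} recalled in the Introduction (the Frenkel--Kac--Wakimoto conjecture for $k$ of the form~\eqref{eq:form-of-k}), which identifies the principal reduction of the admissible affine vertex algebra $\Vs{k}$ with $\W_k(\fing)$; in particular $\BRS{0}{\Vs{k}}$ is simple, consistent with its being a non-zero quotient of $\W^k(\fing)$, whose unique simple quotient is $\W_k(\fing)$. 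One can alternatively compute $\Zhu(\BRS{0}{\Vs{k}})\cong\BRS{0}{\Zhu(\Vs{k})}$ as a quotient of $\mc{Z}(\fing)=\Zhu(\W^k(\fing))$ via Theorem~\ref{Th:Zhu-algebra-correspondence}, but an extra input would still be needed to pass from Zhu's algebra back to simplicity of the vertex algebra, so the cited route is the efficient one.

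Finally, still with $k$ non-degenerate, $X_{\W_k(\fing)}=X_{\BRS{0}{\Vs{k}}}\cong X_{\Vs{k}}\cap\mc{S}_{f_{prin}}=\Nil\cap\mc{S}_{f_{prin}}$, which is the single point $\chi$: it is the transverse intersection of the cone $\Nil$ with the complementary-dimensional slice $\mc{S}_{f_{prin}}$, hence finite, and the contracting $\C^*$-action forces it to be $\{\chi\}$. Thus $X_{\W_k(\fing)}$ is zero-dimensional and $\W_k(\fing)$ is $C_2$-cofinite; equivalently, this is Theorem~\ref{Th:C2-cofiniteness} applied to the denominator $q$ of $k$ and transported along the isomorphism of the previous step, since for non-degenerate $k$ the element $f_q$ is conjugate to $f_{prin}$. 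I expect no real obstacle here: the ``only if'' direction and the $C_2$-cofiniteness are formal consequences of Theorem~\ref{Th:vareity-of-admissible-affine} together with the transversality of the Slodowy slice, and the single genuinely non-formal ingredient is the identification $\BRS{0}{\Vs{k}}\cong\W_k(\fing)$ of~\cite{Ara07}.
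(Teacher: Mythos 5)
Your proof is correct and follows essentially the same route as the paper: the identity $X_{\BRS{0}{\Vs{k}}}\cong X_{\Vs{k}}\cap\mc{S}_{f_{prin}}$ from Theorems~\ref{Th:vanishing-Poisson} and~\ref{Th:Arakawa}, the computation of $X_{\Vs{k}}$ from Theorem~\ref{Th:vareity-of-admissible-affine}, the contracting $\C^*$-action on the Slodowy slice from \cite{GanGin02}, and the non-formal ingredient $\BRS{0}{\Vs{k}}\cong\W_k(\fing)$ from \cite[Theorem 9.1.4]{Ara07}. The paper's own proof is just two sentences, attributing the non-vanishing and $C_2$-cofiniteness to Theorem~\ref{Th:C2-cofiniteness} and the isomorphism to \cite{Ara07}; you correctly spell out the ``only if'' direction of the non-vanishing, which is not literally contained in the statement of Theorem~\ref{Th:C2-cofiniteness} (that theorem concerns the reduction along $f_q$, not $f_{prin}$) but follows exactly by the variety-and-contraction argument you give. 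This is a legitimate and mildly more careful filling-in of the same argument, not a different approach.
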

\begin{proof}
 The fact that
$H_{f_{prin}}^0(\Vs{k})
\cong \W_k(\fing)$ for a non-degenerate
admissible number
$k$ was 
 proved in \cite[Theorem 9.1.4]{Ara07}.
The rest of the assertion is the special case of Theorem \ref{Th:C2-cofiniteness}.
\end{proof}

Let 
\begin{align*}
Pr^k_{\mathit{non-deg}}=\{\lam\in Pr^k|  \bra \lam,\alpha\che\ket \not \in \Z
\text{ for all }\alpha\in \Delta\},
\end{align*}
 the set of {\em non-degenerate admissible weights}
 \cite[Lemma 1.5]{FKW92} of
level
$k$.
It is known 
\cite{FKW92} that
$Pr_{non-deg}^k$ is non-empty
if and only if $k$ is non-degenerate.
Put
\begin{align*}
Pr^k_{\W}=\{\gamma_{\bar \lam}|  \lam\in Pr_{\mathit{non-deg}}^k\}.
\end{align*}
Then
$\sharp Pr^k_{\W}=\sharp Pr^k_{non-deg}/\sharp W$
since
$W$ acts on $Pr^k_{non-deg}$ freely (by the dot action).

The irreducible representations 
$\{\IrrW{\gamma}|  \gamma\in Pr^k_{W}\}$ are  called
 {\em minimal series representations} of $\W^k(\fing)$.
 In
\cite{Ara07} we have verified 
the conjectural character formula of 
minimal series representations
of
$\W_k(\fing)$ given by
Frenkel-Kac-Wakimoto  \cite{FKW92}.
(In fact the main result of \cite{Ara07}
gives the character of all $\IrrW{\gamma}$,
see Theorem \ref{Th:character-of-W-modules}
and Corollary \ref{Co:multiplicity-formula} below.)

\begin{Rem}
The module
 $\IrrW{\gamma}$ with $\gamma\in Pr^k_{\W}$ admits a
two-sided resolution in terms of 
free field realizations  \cite{A-BGG}.
However we do not need this result.
\end{Rem}

 \begin{Th}\label{Th:classification-of-simlple-modules-of-minimal-W}
Let $k$ be  a non-degenerate admissible number  for $\affg$,
$\gamma$ a central character of $\mc{Z}(\fing)$.
Then $\IrrW{\gamma}$ is a 
module over $\W_k(\fing)$ if and only if it is a minimal series
  representation
of $\W^k(\fing)$,
that is,
\begin{align*}
\on{Irr}(\W_k(\fing))=\{\IrrW{\gamma}|\gamma \in Pr_{\W}^k\}.
\end{align*}
 \end{Th}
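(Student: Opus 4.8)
The plan is to derive the classification from that of the simple modules of the simple admissible affine vertex algebra $\Vs{k}$, via the compatibility of Zhu's algebra with the quantized Drinfeld--Sokolov reduction. Since $k$ is non-degenerate, Theorem~\ref{Th:simple=image-of-simple} gives $\W_k(\fing)\cong H^0_{f_{prin}}(\Vs{k})=H^0_{f_{prin}}(\Vg{k}/N_k(\fing))$ and, in particular, this reduction is non-zero. Hence Theorem~\ref{Th:classification.general} applies with $N=N_k(\fing)$ and yields
\[
\on{Irr}(\W_k(\fing))=\{\,\IrrW{\gamma}\mid J_{N_k(\fing)}\subset U(\fing)\ker\gamma\,\},
\]
where $J_{N_k(\fing)}$ is the image of $\Zhu(N_k(\fing))$ in $\Zhu(\Vg{k})=U(\fing)$; by the right exactness of $\Zhu(?)$ (Lemma~\ref{Lem:FreZhu}) one has $U(\fing)/J_{N_k(\fing)}\cong \Zhu(\Vs{k})$. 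Thus the theorem reduces to the identification
\[
\{\,\gamma\mid J_{N_k(\fing)}\subset U(\fing)\ker\gamma\,\}=Pr^k_{\W}.
\]

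To handle the left-hand side I would use Kostant's theory: for a central character $\gamma$ of $\mc{Z}(\fing)$, the module $Y_\gamma=Y\*_{\mc{Z}(\fing)}\C_\gamma$ is a simple $U(\fing)$-module with $\Ann_{U(\fing)}Y_\gamma=U(\fing)\ker\gamma$ (\cite{Kos78}). Hence $J_{N_k(\fing)}\subset U(\fing)\ker\gamma$ holds exactly when $Y_\gamma$ is a module over $U(\fing)/J_{N_k(\fing)}=\Zhu(\Vs{k})$, which by Corollary~\ref{Co:admisiblle} is equivalent to $U(\fing)\ker\gamma=\Ann_{U(\fing)}\Ls{\bar\lam}$ for some $\lam\in Pr^k$. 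It then remains to show that $U(\fing)\ker\gamma$ is of this form precisely when $\gamma=\gamma_{\bar\lam}$ for some $\lam\in Pr^k_{\mathit{non-deg}}$. For the inclusion ``$\supseteq$'': if $\lam\in Pr^k_{\mathit{non-deg}}$ then $\bra\bar\lam,\alpha\che\ket\notin\Z$ for all $\alpha\in\Delta$, so the integral root system of $\bar\lam$ is empty, $\Ms{\bar\lam}$ is irreducible, $\Ls{\bar\lam}=\Ms{\bar\lam}$, and $\Ann_{U(\fing)}\Ls{\bar\lam}=U(\fing)\ker\gamma_{\bar\lam}$ (the annihilator of a Verma module is generated by the kernel of its central character); moreover $\Ls{\bar\lam}$ is a $\Zhu(\Vs{k})$-module by Corollary~\ref{Co:admisiblle}, since $\lam\in Pr^k$, so $\gamma_{\bar\lam}$ lies in the left-hand set. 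For the inclusion ``$\subseteq$'': suppose $U(\fing)\ker\gamma=\Ann_{U(\fing)}\Ls{\bar\lam}$ with $\lam\in Pr^k$. Since $\lam$ is admissible it is regular dominant, and restricting the dominance condition to the finite positive roots shows $\bra\bar\lam+\rho,\alpha\che\ket\in\Z_{>0}$ for every positive integral root $\alpha$ of $\bar\lam$, i.e.\ $\bar\lam$ is regular and dominant with respect to its integral root system. On the other hand $\Ann_{U(\fing)}\Ls{\bar\lam}$ is a minimal primitive ideal only if $\Ls{\bar\lam}$ is a Verma module, i.e.\ only if $\bar\lam$ is antidominant. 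A weight that is simultaneously dominant and antidominant with respect to its integral root system must have empty integral root system, whence $\lam\in Pr^k_{\mathit{non-deg}}$ and $\gamma=\gamma_{\bar\lam}\in Pr^k_{\W}$. This gives $\on{Irr}(\W_k(\fing))=\{\IrrW{\gamma}\mid\gamma\in Pr^k_{\W}\}$, as claimed.

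The main obstacle is the ``$\subseteq$'' half of the last step: isolating, among the $\Ls{\bar\lam}$ with $\lam\in Pr^k$, exactly those whose annihilator is a minimal primitive ideal. This rests on two classical but non-trivial inputs from the theory of primitive ideals --- that the annihilator of a Verma module is generated by the kernel of its central character, and that a simple highest weight module has annihilator equal to a minimal primitive ideal if and only if it is itself a Verma module (equivalently, its highest weight is antidominant) --- together with the elementary observation that admissibility of $\lam$ forces $\bar\lam$ to the dominant extreme of its integral block, so that the dominant and antidominant conditions coincide only for a non-degenerate admissible weight. The remaining ingredients --- Theorems~\ref{Th:simple=image-of-simple} and \ref{Th:classification.general}, Corollary~\ref{Co:admisiblle}, and Kostant's irreducibility and annihilator theorems for Whittaker modules --- are then assembled as above; the injectivity of $\lam\mapsto\gamma_{\bar\lam}$ on $Pr^k_{\mathit{non-deg}}$, which is needed only to see that no two minimal series representations are identified, is routine from the Frenkel--Kac--Wakimoto parametrization.
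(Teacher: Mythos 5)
Your proposal is correct, and it reaches the same classification as the paper, but the route through the ``$\subseteq$'' direction is genuinely different and invokes deeper machinery than the paper needs. Both arguments begin identically: Theorem~\ref{Th:simple=image-of-simple} gives $\W_k(\fing)\cong H^0_{f_{prin}}(\Vs{k})$, Theorem~\ref{Th:classification.general} reduces the problem to identifying $\{\gamma\mid J_{N_k(\fing)}\subset U(\fing)\ker\gamma\}$, and both use Duflo's theorem that $\Ann_{U(\fing)}\Ms{\bar\mu}=U(\fing)\ker\gamma_{\bar\mu}$. The divergence is in how the condition $J_{N_k(\fing)}\subset U(\fing)\ker\gamma$ is decoded. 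The paper parametrizes the central characters from the outset by \emph{anti-dominant} representatives: writing $\gamma=\gamma_{\bar\nu}$ with $\bar\nu$ anti-dominant, so that $\Ls{\bar\nu}=\Ms{\bar\nu}$ and $U(\fing)\ker\gamma=\Ann\Ls{\bar\nu}$ by Duflo, one then applies Theorem~\ref{Th:classification-of-simple-modules-over-admissible-affine} directly to $\Ls{\bar\nu}$ and closes with the elementary observation that admissibility (regular dominance) plus anti-dominance of $\bar\nu$ forces $\bar\nu+k\Lambda_0\in Pr^k_{\mathit{non-deg}}$. No converse direction of Duflo's theorem is ever needed. You instead route through Corollary~\ref{Co:admisiblle} applied to $Y_\gamma$, which produces some $\lam\in Pr^k$ with $U(\fing)\ker\gamma=\Ann\Ls{\bar\lam}$, and then you must show $\lam$ itself lies in $Pr^k_{\mathit{non-deg}}$. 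For this you invoke the Borho--Jantzen/Joseph theorem that a simple highest weight module has minimal primitive annihilator only if it is a Verma module (equivalently, its highest weight is anti-dominant in its integral block). This is a correct appeal to the theory of $\tau$-invariants and left cells, but it is a substantially heavier input than the paper uses; the paper's choice to normalize $\gamma$ by its anti-dominant representative at the start makes the whole converse superfluous. Your detour could be removed by the same device: once you know $U(\fing)\ker\gamma=\Ann\Ls{\bar\lam}$ for some $\lam\in Pr^k$, pass to the anti-dominant representative $\bar\nu$ of the orbit of $\bar\lam$, note $\Ann\Ls{\bar\nu}=U(\fing)\ker\gamma=\Ann\Ls{\bar\lam}$, and apply Corollary~\ref{Co:admisiblle} (or Theorem~\ref{Th:classification-of-simple-modules-over-admissible-affine}) to $\Ls{\bar\nu}$ to conclude $\bar\nu+k\Lambda_0\in Pr^k_{\mathit{non-deg}}$, with no use of the $\tau$-invariant theorem. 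The remaining steps in your writeup --- the ``$\supseteq$'' direction, the injectivity remark, and the use of Kostant's theorems on Whittaker modules --- match the paper exactly.
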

\begin{proof}
Set $J_k=J_{N_k(\fing)}$,
so that
\begin{align*}
 A(L(k\Lam_0))=U(\fing)/J_k.
\end{align*}
By
Theorem \ref{Th:simple=image-of-simple},
we have $\W_k(\fing)=H^0_{f_{prin}}(\Vg{k}/N_k(\fing))$.
Hence
 Theorem 
\ref{Th:classification.general}
gives that
\begin{align*}
 \on{Irr}(\W_k(\fing))= \{\IrrW{\gamma}|  U(\fing)\ker \gamma\supset J_k\}.
\end{align*}

Now recall that $\bar \lam\in \dual{\finh}$ is called 
{\em anti-dominant} if $\bra\bar \lam+\rho,\alpha\che\ket\not\in \N$
for all $\alpha\in \Delta_+$.
Clearly,
for any central character $\gamma:\mc{Z}(\fing)\ra \C$
there exists an anti-dominant $\bar \lam\in \dual{\finh}$
such that $\gamma=\gamma_{\bar \lam}$.
It is well-known that   $\Ls{\bar \lam}=M_{\fing}(\bar \lam)$
for an anti-dominant $\bar \lam$
and that
\begin{align*}
 \Ann_{U(\fing)}M_{\fing}(\bar \lam)=U(\fing)\ker \chi_{\bar \lam}.
\end{align*}

We have 
\begin{align*}
 &\{\IrrW{\gamma}|  U(\fing)\ker \gamma
\supset J_k\}\\
=&
\{\IrrW{\gamma_{\bar \lam}}|  \bar \lam\in \dual{\finh}, 
\text{ $\bar \lam$ is anti-dominant, }
 \Ann_{U(\fing)}\Ls{\bar \lam}\supset J_k\}\quad\text{(by the above)}
\\
=&
\{\IrrW{\gamma_{\bar \lam}}|  \bar \lam\in \dual{\finh},  \text{ $\bar \lam$ is anti-dominant, 
$\Ls{\bar \lam}$ is an 
$\Zhu(\Vs{k})$-module}\}\\
=&
\{\IrrW{\gamma_{\bar \lam}}|  \lam\in \dual{\affh},  \text{ $\bar \lam$ is anti-dominant, 
$L(\lam)$ is an 
$\Vs{k}$-module}\}\\
=&\{\IrrW{\gamma_{\bar \lam}}|  \lam\in Pr^k,\ \text{$\bar \lam$ is
 anti-dominant}\}\quad\text{(by Theorem
 \ref{Th:classification-of-simple-modules-over-admissible-affine})}\\
=&\{\IrrW{\gamma_{\bar \lam}}|  \lam\in Pr^k_{non-deg}\}
=\{\IrrW{\gamma}|\gamma \in Pr_{\W}^k\}.
\end{align*}
This completes the proof.
\end{proof}
 
\begin{Th}\label{Th:semi-simplicity-of-Zhu}
For  a non-degenerate admissible number $k$ for $\affg$,
Zhu's algebra
 $\Zhu(\W_k(\fing))$ is semisimple.
\end{Th}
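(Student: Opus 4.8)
The plan is to show that $\Zhu(\W_k(\fing))$ is a \emph{reduced} finite-dimensional commutative algebra, by realising it as a finite-dimensional BRST reduction of the semiprimitive algebra $\Zhu(\Vs{k})$. First I would collect the formal reductions: since $\W^k(\fing)$ is $\Z_{\geq 0}$-graded and $\Zhu(\W^k(\fing))\cong\mc{Z}(\fing)$ is commutative, the right exactness of $\Zhu(\cdot)$ (Lemma~\ref{Lem:FreZhu}) and Theorem~\ref{TH:Zhu-algebra-of-quotients} give an isomorphism of algebras
\begin{align*}
\Zhu(\W_k(\fing))\ \cong\ \mc{Z}(\fing)/\BRS{0}{J_N},\qquad J_N=\Ker\bigl(U(\fing)\twoheadrightarrow\Zhu(\Vs{k})\bigr),\quad N=N_k(\fing),
\end{align*}
where $\BRS{0}{\cdot}$ is the finite-dimensional principal BRST reduction of Section~\ref{section:finite W-algebras}; in particular $\Zhu(\W_k(\fing))$ is a commutative quotient of $\mc{Z}(\fing)$, and it is finite-dimensional by the $C_2$-cofiniteness of $\W_k(\fing)$ (Theorem~\ref{Th:simple=image-of-simple}). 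A finite-dimensional commutative $\C$-algebra is a finite product of local Artinian algebras with residue field $\C$ and is semisimple precisely when it is reduced, so it suffices to prove that $\BRS{0}{J_N}$ is a radical ideal of $\mc{Z}(\fing)$.

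The heart of the matter will be the following input: $J_N$ is \emph{semiprimitive}, i.e. it is the intersection of the primitive ideals of $U(\fing)$ that contain it, equivalently the nilradical of $\Zhu(\Vs{k})=U(\fing)/J_N$ is zero. I would deduce this from the complete reducibility of admissible representations of $\affg$ proved by Gorelik and Kac \cite{GorKac0905}: that result makes the category of $\Vs{k}$-modules lying in $\BGG$ semisimple, and via the classification of its simple objects \cite{A12-2} (Theorem~\ref{Th:classification-of-simple-modules-over-admissible-affine}) together with Zhu's correspondence this semisimplicity descends to the category of $\Zhu(\Vs{k})$-modules, forcing the nilradical of $\Zhu(\Vs{k})$ — which annihilates every simple module — to vanish.

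Granting this, let $\mc{P}$ be the set of primitive ideals $\rho\subset U(\fing)$ with $\rho\supseteq J_N$; by Corollary~\ref{Co:admisiblle} these are exactly the ideals $\Ann_{U(\fing)}\Ls{\bar\lam}$ with $\lam\in Pr^k$. Semiprimitivity gives an embedding $\Zhu(\Vs{k})\hookrightarrow\prod_{\rho\in\mc{P}}U(\fing)/\rho$, and applying the exact functor $\BRS{0}{\cdot}$ on $\HC$ (Theorem~\ref{Th:vanishing-finite-dimensional}) produces an embedding
\begin{align*}
\Zhu(\W_k(\fing))\ =\ \mc{Z}(\fing)/\BRS{0}{J_N}\ \hookrightarrow\ \prod_{\rho\in\mc{P}}\BRS{0}{U(\fing)/\rho}.
\end{align*}
For a primitive ideal $\rho$, the algebra $\BRS{0}{U(\fing)/\rho}$ is commutative (a quotient of $\mc{Z}(\fing)$) and finite-dimensional, and by Theorem~\ref{eq:equivalence-of-categoroes} together with Kostant's equivalence $\mc{C}\cong\mc{Z}(\fing)\on{-Mod}$ its module category is equivalent either to that of $\C$ or to the zero category; a commutative algebra with such a module category is $\C$ or $0$, in either case reduced. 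Hence $\Zhu(\W_k(\fing))$ embeds into a finite product of copies of $\C$, so it is reduced, hence semisimple (and its maximal ideals are then exactly the ones indexed by $Pr^k_{\W}$, in accordance with Theorem~\ref{Th:classification-of-simlple-modules-of-minimal-W}).

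The main obstacle is the semiprimitivity of $\Zhu(\Vs{k})$. That the nilradical of $U(\fing)/J_N$ vanishes does not follow from the classification of its simple modules alone and genuinely requires the complete-reducibility theorem of \cite{GorKac0905}, applied in the category $\BGG$ of $\affg$; moreover one must be careful that Zhu's functor really transfers the semisimplicity of the module category of $\Vs{k}$ to the ring $\Zhu(\Vs{k})$ itself and not merely to its category of finitely generated modules, and that the auxiliary fact that each $\BRS{0}{U(\fing)/\rho}$ is reduced is correctly extracted from the finite $W$-algebra dictionary of Section~\ref{section:finite W-algebras} and \cite{Los11}.
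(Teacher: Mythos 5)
Your overall target is the right one (for a finite-dimensional commutative algebra, semisimplicity is equivalent to being reduced, which is equivalent to the radicality of the ideal you call $\BRS{0}{J_N}$), and your embedding step from a semiprimitive $J_N$ to reducedness of $\Zhu(\W_k(\fing))$ via the exactness of $\BRS{0}{\cdot}$ on $\mc{HC}$ would indeed go through. But the key input — semiprimitivity of $\Zhu(\Vs{k})=U(\fing)/J_N$ — is not established, and your sketch for it contains two genuine gaps, which you flag at the end but do not close.

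First, ``Zhu's correspondence'' is only a bijection on isomorphism classes of simple objects, not an equivalence of categories between positively graded $\Vs{k}$-modules and $\Zhu(\Vs{k})$-modules. Passing from the semisimplicity of $\Vs{k}\on{-Mod}\cap\BGG$ (Gorelik--Kac) to $\Ext$-vanishing between $\Zhu(\Vs{k})$-modules is precisely the nontrivial step, and it does not come for free: one must produce, from a given extension of $\Zhu(\Vs{k})$-modules, an extension of positively graded $\Vs{k}$-modules on which Gorelik--Kac can be applied, and the functors $E\mapsto M_{\Vs{k}}(E)$ are only right exact, so this is not automatic. Second, even if one knew that the category of $\Zhu(\Vs{k})$-modules with certain finiteness or weight conditions were semisimple, this would not force the Jacobson radical of $\Zhu(\Vs{k})$ to vanish: the ring $U(\fing)/J_N$ is not Artinian (by Theorem~\ref{Th:variety-of-Zhu's-algebra} its associated variety is the full nilpotent cone $\mc{N}$, so it is infinite-dimensional), and for a non-Artinian ring the statement ``the radical annihilates all simple modules'' is the definition of the Jacobson radical, not a proof that it is zero. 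So the phrase ``forcing the nilradical of $\Zhu(\Vs{k})$ --- which annihilates every simple module --- to vanish'' is circular.

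The paper avoids both pitfalls by never making a claim about $J_N$ itself. It works only with the Artinian ring $\Zhu(\W_k(\fing))$, reduces to killing self-extensions of each $\C_{\gamma_{\bar\lam}}$ with $\lam\in Pr^k_{\mathit{non-deg}}$, and closes the gap with a concrete lifting device (Lemma~\ref{Lem:extension}): using the regularity of $\bar\lam$ and the Jacobian formula \eqref{eq:Jacobian} for the Harish-Chandra isomorphism, it constructs a first-order deformation $N$ of the Verma module $M_{\fing}(\bar\lam)$ whose $\finn_-$-homology recovers the given self-extension of $\C_{\gamma_{\bar\lam}}$. An annihilator computation shows $N$ is a $\Zhu(\Vs{k})$-module, inducing up via $\U(\Vs{k})\*_{\U(\Vs{k})_{\leq 0}}?$ and Proposition~\ref{Pro:induction} yields a self-extension of $\Irr{\lam}$ in $\BGG$, Gorelik--Kac splits it, and one restricts back down. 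This lifting construction is exactly the missing step in your plan; without it the transfer of complete reducibility across Zhu's functor does not happen, and semiprimitivity of $J_N$ remains unproved.
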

In order to prove Theorem \ref{Th:semi-simplicity-of-Zhu},
we consider the Lie algebra
{\em homology} functor
\begin{align*}
\fing\Mod \ra  \mc{Z}(\fing)\Mod, 
\quad M\mapsto H_0(\finn_-,M).
\end{align*}
Since $\Ms{\lam}$ is free over $U(\finn_-)$,
\begin{align}
H_i(\finn_-, \Ms{\lam})\cong \begin{cases}
					\C_{\gamma_{\lam}}&\text{for
					}i=0,\\0&\text{for }
i>0.			       \end{cases}
\label{eq:whitttaker}
\end{align}
\begin{Lem}\label{Lem:extension}
Let $\lam\in \finh^*$  be regular, 
that is, $\bra \lam+\rho,\alpha\che\ket \ne 0$
for all $\alpha\in \Delta$.
Then
for an exact sequence
$0\ra \C_{\gamma_{\lam}}\overset{\phi_1}{\ra} E\overset{\phi_2}{\ra}
 \C_{\gamma_{ \lam}}\ra 0$
of $\mc{Z}(\fing)$-modules,
there exists 
an exact sequence
$0\ra  M_{\fing}( \lam)\ra N\ra  M_{\fing}( \lam)\ra 0$
of $\fing$-modules such 
$E\cong H_0(\finn_-, N)$
as $\mc{Z}(\fing)$-modules.
\end{Lem}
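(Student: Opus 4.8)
The plan is to realize the given $\mc{Z}(\fing)$-module extension by applying the $\finn_-$-homology functor to a self-extension of $\Ms{\lam}$ obtained by inducing a two-dimensional $\finb$-module. Since $\mc{Z}(\fing)$ is commutative and $\C_{\gamma_\lam}$ is one-dimensional, a self-extension $0\to\C_{\gamma_\lam}\to E\to\C_{\gamma_\lam}\to0$ is the same datum as a linear functional $\phi\colon\mc{Z}(\fing)\to\C$ with $\phi(zz')=\gamma_\lam(z)\phi(z')+\phi(z)\gamma_\lam(z')$; concretely $E=\C v_0\+\C v_1$ with $z\cdot v_0=\gamma_\lam(z)v_0$ and $z\cdot v_1=\gamma_\lam(z)v_1+\phi(z)v_0$. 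So it is enough, given such a $\phi$, to construct an exact sequence $0\to\Ms{\lam}\to N\to\Ms{\lam}\to0$ of $\fing$-modules with $H_0(\finn_-,N)\cong E$.

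First I would translate $\phi$ into a tangent vector. By Chevalley's theorem together with the (unshifted) Harish-Chandra homomorphism, $\mc{Z}(\fing)$ is identified with the ring $\C[\finh^*]^{W\cdot}$ of polynomials invariant under the $\rho$-shifted $W$-action, say $z\mapsto\bar z$, in such a way that $\gamma_\lam(z)=\bar z(\lam)$. Each $\chi\in\finh^*=T_\lam\finh^*$ produces a functional of the required type, $\phi_\chi\colon z\mapsto(d_\lam\bar z)(\chi)$, and $\chi\mapsto\phi_\chi$ is precisely the differential at $\lam$ of the quotient morphism $\finh^*\to\finh^*/W$ for this $\rho$-shifted action (equivalently, the matrix $\chi\mapsto\phi_\chi$ is the Jacobian at $\lam$ of a system of fundamental invariants). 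Because $\bra\lam+\rho,\alpha\che\ket\neq0$ for all $\alpha\in\Delta$, this morphism is étale at $\lam$, so $\chi\mapsto\phi_\chi$ is a bijection; fix the $\chi$ with $\phi_\chi=\phi$.

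Now the construction. Write $\C_\lam$ for the one-dimensional $\finb$-module ($\finb=\finh\+\finn$) on which $\finn$ acts by $0$ and $\finh$ by $\lam$, so $\Ms{\lam}=U(\fing)\*_{U(\finb)}\C_\lam$. Let $\tilde{\C}_\lam$ be the two-dimensional $\finb$-module with basis $v_0,v_1$ on which $\finn$ acts by $0$ and $h\in\finh$ acts by $h v_0=\lam(h)v_0$, $h v_1=\lam(h)v_1+\chi(h)v_0$; one checks directly that this is a well-defined $\finb$-module, that $\finn$ annihilates it, and that $0\to\C_\lam\to\tilde{\C}_\lam\to\C_\lam\to0$ is exact. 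Set $N=U(\fing)\*_{U(\finb)}\tilde{\C}_\lam$. Exactness of induction gives the exact sequence $0\to\Ms{\lam}\to N\to\Ms{\lam}\to0$, and by the PBW theorem $N\cong U(\finn_-)\*\tilde{\C}_\lam$ as an $\finn_-$-module, so $H_i(\finn_-,N)=0$ for $i>0$ and $H_0(\finn_-,N)=N/\finn_-N$ has dimension $2$. Applying the right-exact functor $H_0(\finn_-,-)$ to the sequence for $N$ and invoking \eqref{eq:whitttaker} produces an exact sequence $0\to\C_{\gamma_\lam}\to H_0(\finn_-,N)\to\C_{\gamma_\lam}\to0$ of $\mc{Z}(\fing)$-modules.

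Finally I would compute this action. Since $\finn$ kills $1\*\tilde{\C}_\lam$ and $z-\bar z\in\finn_-U(\fing)+U(\fing)\finn$ for every $z\in\mc{Z}(\fing)$ (the defining property of the Harish-Chandra homomorphism), $z$ acts on $N/\finn_-N$ exactly as $\bar z\in U(\finh)=S(\finh)$ acts on $\tilde{\C}_\lam$. A first-order Taylor expansion shows $\bar z$ acts on $\tilde{\C}_\lam$ by $\bar z(\lam)\cdot\id+(d_\lam\bar z)(\chi)\cdot n$, where $n$ is the nilpotent endomorphism with $n v_1=v_0$, $n v_0=0$; that is, $z$ acts on $H_0(\finn_-,N)$ by $\gamma_\lam(z)\,\id+\phi_\chi(z)\,n$. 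Hence $H_0(\finn_-,N)$ is the self-extension of $\C_{\gamma_\lam}$ attached to $\phi_\chi=\phi$, so it is isomorphic to $E$ as a $\mc{Z}(\fing)$-module, as desired. The one step that genuinely uses the hypothesis — and the main point to handle with care — is the bijectivity of $\chi\mapsto\phi_\chi$, i.e.\ that every $\gamma_\lam$-twisted derivation of $\mc{Z}(\fing)$ arises from a tangent direction along $\finh^*$; this is exactly the étaleness of $\finh^*\to\finh^*/W$ at the regular point $\lam$, and it fails once $\lam+\rho$ hits a wall. The remaining verifications are routine bookkeeping with induction, the PBW theorem, and the Harish-Chandra homomorphism.
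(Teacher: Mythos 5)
Your proposal is correct and is essentially the paper's own argument, just presented in slightly more geometric language. The two-dimensional $\finb$-module $\tilde{\C}_\lambda$ you build from a tangent vector $\chi\in\finh^*$ is exactly the module $M=A_{\lam+t\mu}/t^2 A_{\lam+t\mu}$ (with $\chi=\mu$, $v_0=t$, $v_1=1$) used in the paper, and your appeal to étaleness of $\finh^*\to\finh^*/W$ at the regular point is the same input the paper extracts from the explicit Jacobian formula $\det(\partial\mu(p_i)/\partial\alpha_j^\vee)=C\prod_{\alpha\in\Delta_+}\alpha^\vee$; both steps rely on $\langle\lam+\rho,\alpha^\vee\rangle\neq0$ in the same way.
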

 \begin{proof}
Choose
homogeneous generators 
$p_1,\dots,p_{\rank \fing}$ of
of 
$\mc{Z}(\fing)$.
Let 
\begin{align*}
\Upsilon: \mc{Z}(\fing)\isomap S(\finh)^W
\end{align*}
be the Harish-Chandra isomorphism,
so that $z v_{\lam}=\Upsilon(z)(\lam+\rho)v_{\lam}$
for $z\in \mc{Z}(\fing)$,
where $v_{\lam}$ is the highest weight vector of $\Ms{\lam}$.
Set
$v=\phi_1(1)$
and
fix $v'\in E$ such that $\phi_2(v')=1$.
Then there exists $d_1,\dots,d_{\rank \fing}\in \C$
such that
\begin{align*}
p_i v'=\Upsilon(p_i)(\lam+\rho)v'+d_i v.
\end{align*}

Let us identify $S(\finh)$ with
 $\C[\alpha_1\che,\dots,\alpha_{\rank \fing}\che]$.
It is well-known that
\begin{align}
\det (\frac{\partial \Upsilon(p_i)}{\partial \alpha_j\che})_{1\leq i,j
\leq \rank \fing}=C\prod_{\alpha\in
 \Delta_+}\alpha\che,
\label{eq:Jacobian}
\end{align}
where $C$ is some nonzero constant.
The hypothesis on $\lam$
implies that
the value of  (\ref{eq:Jacobian})
at $\lam+\rho$ is non-zero.
It follows that
there exists
some
$\mu\in \finh^*$
such that
\begin{align}
 \Upsilon(p_i)(\lam+t\mu+\rho)=\Upsilon (p_i)(\lam+\rho)+t d_i+O(t^2)
\label{eq:Maria}
\end{align}
for all $i=1,\dots,\rank\fing$.

Let $A=\C[t]$,
$\finh_A=\finh\*_{\C}A$.
Denote by  $A_{\lam+t\mu}$
the $\finh_A$-module that is
a rank one free $A$-module on which
$h\in \finh$ acts as multiplication by the scalar $\lam(h)+t\mu(h)$.
Set
$M=A_{\lam+t\mu}/t^2 A_{\lam+t\mu}$
and view $M$ as an $\finh$-module.
Observe that
$t M\cong \C_{\lam}$ and
we have the exact sequence
\begin{align}
 0\ra t M\ra M\ra \C_{\lam} \ra 0
\label{eq:exact-seq-A}
\end{align}
of $\finh$-modules.
Set 
\begin{align*}
N=U(\fing)\*_{U(\finb)}M,
\end{align*}
where $\finb=\finh\+ \finn$
and $M$ is regarded as a $\finb$-module via the
natural surjection $\finb\ra \finh$.
Applying the induction functor $U(\fing)\*_{U(\finb)}?$
to (\ref{eq:exact-seq-A})
we obtain the exact sequence
\begin{align}
 0\ra M_{\fing}(\lam)\ra N
\ra M_{\fing}(\lam)\ra 0
\label{eq:exact1}
\end{align}
of $\fing$-modules.
Next  applying  the functor $H_0(\finn, ? )$
we get 
 the exact sequence
\begin{align*} 
 0\ra \C_{\gamma_{\lam}}
\ra H_0(\finn_-, N
)\ra\C_{\gamma_{\lam}}
\ra 0
\end{align*}
of $\mc{Z}(\fing)$-modules
by  (\ref{eq:whitttaker}).
By construction,
$H_0(\finn_-, N)\cong E$ as required.
 \end{proof}

\begin{Pro}\label{Pro:induction}
For $\lam\in Pr^k$
we have $
L(\lam)\cong M_{\Vs{k}}(\Ls{\bar \lam})
$
(see \eqref{eq:def-of-Verma}).
\end{Pro}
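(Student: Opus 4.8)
The plan is to reduce the statement, via Zhu's theorem, to the assertion that the module $M_{\Vs{k}}(\Ls{\bar\lam})$ is simple, and then to extract the latter from the rationality of $\Vs{k}$ in the category $\BGG$ of $\affg$ (Theorem \ref{Th:classification-of-simple-modules-over-admissible-affine}).

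First I would record the two consequences of the hypothesis $\lam\in Pr^k$ furnished by Theorem \ref{Th:classification-of-simple-modules-over-admissible-affine}: $\Irr{\lam}$ is a $\Vs{k}$-module, and $\Ls{\bar\lam}$ is a module over $\Zhu(\Vs{k})$; the latter is a \emph{simple} $\Zhu(\Vs{k})$-module because $\Zhu(\Vs{k})$ is a quotient of $U(\fing)$ and $\Ls{\bar\lam}$ is already simple over $U(\fing)$. Next I would identify the lowest-conformal-weight subspace of $\Irr{\lam}$: since the positive-degree part of $U(\fing[t^{-1}]t^{-1})$ strictly raises the $L_0$-grading, this subspace is $U(\fing)v_{\lam}=U(\finn_-)v_{\lam}$, a highest weight $\fing$-module of highest weight $\bar\lam$ which is simple (being simple over $\Zhu(\Vs{k})$, hence over $U(\fing)$), so it equals $\Ls{\bar\lam}$. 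By Zhu's theorem \eqref{Zhus-theorem} this forces $\Irr{\lam}\cong L_{\Vs{k}}(\Ls{\bar\lam})$, the unique simple quotient of $M_{\Vs{k}}(\Ls{\bar\lam})$; hence the proposition is equivalent to the simplicity of $M_{\Vs{k}}(\Ls{\bar\lam})$.

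To prove that simplicity I would invoke \eqref{eq:induced module}, which gives $M_{\Vg{k}}(\Ls{\bar\lam})\cong U(\affg)\*_{U(\fing[t]\+\C K)}\Ls{\bar\lam}$. Since $\Ls{\bar\lam}$ lies in the category $\BGG$ of $\fing$ (weights bounded above, finite multiplicities), this induced module lies in the category $\BGG$ of $\affg$ and is a highest weight module of highest weight $\lam=\bar\lam+k\Lam_0$. Its quotient $M_{\Vs{k}}(\Ls{\bar\lam})$ is then a highest weight $\affg$-module of highest weight $\lam$ lying in $\BGG$ which is a $\Vs{k}$-module, and it is nonzero because it surjects onto $\Irr{\lam}$. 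By Theorem \ref{Th:classification-of-simple-modules-over-admissible-affine}, $\Vs{k}$ is rational in the category $\BGG$ of $\affg$, i.e.\ every $\Vs{k}$-module lying in $\BGG$ is completely reducible; being a highest weight module, $M_{\Vs{k}}(\Ls{\bar\lam})$ is indecomposable, so complete reducibility forces it to be simple, whence $M_{\Vs{k}}(\Ls{\bar\lam})=L_{\Vs{k}}(\Ls{\bar\lam})\cong\Irr{\lam}$.

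The step I expect to be the main obstacle is the last one: one must be careful about what ``rational in the category $\BGG$'' supplies — in particular that it yields complete reducibility for modules with possibly infinite-dimensional lowest-weight space, such as $M_{\Vs{k}}(\Ls{\bar\lam})$ — and one must check that $U(\affg)\*_{U(\fing[t]\+\C K)}\Ls{\bar\lam}$ genuinely belongs to $\BGG$ of $\affg$ (finiteness of weight multiplicities), which is exactly the point where $\Ls{\bar\lam}\in\BGG(\fing)$ enters. The remaining steps are routine bookkeeping with the functor $E\mapsto M_{\Vs{k}}(E)$ of \eqref{eq:def-of-Verma} and with Zhu's theorem.
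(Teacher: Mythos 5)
Your proof is correct and follows essentially the same route as the paper's: both hinge on realizing $M_{\Vs{k}}(\Ls{\bar\lam})$ as a quotient of the induced module $U(\affg)\*_{U(\fing[t]\+\C K)}\Ls{\bar\lam}$, hence as an object of category $\BGG$ of $\affg$, and then invoking rationality of $\Vs{k}$ in $\BGG$ (Theorem \ref{Th:classification-of-simple-modules-over-admissible-affine}) together with the fact that a highest weight module is indecomposable, so complete reducibility forces it to be simple and hence equal to $L(\lam)$. Your preliminary paragraph reducing to simplicity of $M_{\Vs{k}}(\Ls{\bar\lam})$ via Zhu's theorem is logically sound but unnecessary; the paper skips it and observes directly that a completely reducible module generated by a single highest weight vector of weight $\lam$ must be $L(\lam)$.
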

 \begin{proof}
We have a surjective map
\begin{align*}
M_{\Vg{k}}(\Ls{\bar \lam})=
 U(\affg)\*_{U(\fing[t]\+ \C K)}\Ls{\bar \lam}\twoheadrightarrow
M_{\Vs{k}}(\Ls{\bar \lam})
\end{align*}
of $\affg$-modules.
It follows that
$M_{\Vs{k}}(\Ls{\bar \lam})$
 is an object of
$\BGG$ of $\affg$.
Being a $\Vs{k}$-module,
$M_{\Vs{k}}(\Ls{\bar \lam})$
 decomposes into  a direct sum of admissible representations
by Theorem
  \ref{Th:classification-of-simple-modules-over-admissible-affine}.
Since it is generated by the highest weight vector of $\Ls{\bar \lam}$,
$M_{\Vs{k}}(\Ls{\bar \lam})$
must be isomorphic to
$L(\lam)$.
 \end{proof}

\begin{proof}[Proof of Theorem \ref{Th:semi-simplicity-of-Zhu}]
Since
$\W_k(\fing)=H^0_{f_{prin}}(L(k\Lam_0))$
is $C_2$-cofinite
by Theorem \ref{Th:C2-cofiniteness},
Zhu's algebra $A(\W_k(\fing))$
is finite-dimensional.
Also, we have shown that
 $\on{Irr}(\Zhu(\W_k(\fing)))=\{\C_{\gamma}| \gamma\in Pr^k_{\W}\}$
in Theorem \ref{Th:classification-of-simlple-modules-of-minimal-W}.

Let $\lam \in Pr_{non-deg}^k$,
and let 
\begin{align}
 0\ra \C_{\gamma_{\bar \lam}}\ra E\ra \C_{\gamma_{\bar \lam}}\ra 0
\label{eq:non-spliting}
\end{align}
 be
 an exact sequence of
$\Zhu(\W_k(\fing))$-modules.
We need to show that 
this sequence splits.

Recall that $\Ls{\bar \lam}=\Ms{\bar \lam}$ for $\lam\in Pr^k_{non-deg}$.
By Lemma \ref{Lem:extension}
there exists
an exact sequence
\begin{align}\label{eq:non-spltiting-O}
0\ra \Ls{\bar \lam}\ra N\ra \Ls{\bar \lam}\ra 0
\end{align}of $\fing$-modules
that gives the exact sequence
(\ref{eq:non-spliting})
by applying the functor $H_0(\finn_-,?)$.
Since   $\Ann_{U(\fing)} \Ls{\bar \lam}=U(\fing)\ker \gamma_{\lam}$
we have
\begin{align}
 \Ann_{U(\fing)} N=U(\fing)\Ann_{\mc{Z}(\fing)}E.
\label{eq:annhilater}
\end{align}
On the other hand,
by applying the exact functor $Y\*_{\mc{Z}(\fing)}?$
to (\ref{eq:non-spliting})
we obtain the exact sequence
of $\Zhu(\Vs{k})$-modules
\begin{align*}
 0\ra Y_{\gamma_{\bar \lam}}\ra 
Y\*_{\mc{Z}(\fing)}E\ra Y_{\gamma_{\bar \lam}}\ra 0
\end{align*}
by Theorem \ref{Th:equivalence}.
It follows similarly that
\begin{align}
 \Ann_{U(\fing)}(Y\*_{\mc{Z}(\fing)}E)
=U(\fing)\Ann_{\mc{Z}(\fing)}E.
\label{eq:annihilator-is-the-same}
\end{align}
From (\ref{eq:annhilater}) and (\ref{eq:annihilator-is-the-same}),
it follows that
$N$ is  a module over 
$\Zhu(\Vs{k})$ as well,
and
\eqref{eq:non-spltiting-O}
is 
an  exact sequence
of $\Zhu(\Vs{k})$-modules.
Therefore by applying
 the functor
$\U(\Vs{k})\*_{\U(\Vs{k})_{\leq 0}}?$
to \eqref{eq:non-spltiting-O} 
 we
 obtain an exact sequence
\begin{align}
 0\ra \Irr{\lam}\ra M_{\Vs{k}}(N)\ra 
\Irr{\lam}\ra 0
\label{eq:this-splits}
\end{align}
of $\Vs{k}$-modules
by Proposition  \ref{Pro:induction}.
Here
the map
$\Irr{\lam}\ra M_{\Vs{k}}(N)$ is injective since $L(\lam)$ is simple.
Now, thanks to 
 Gorelik and Kac \cite{GorKac0905},
an admissible $\affg$-module
does not admit a non-trivial self-extension.
Therefore (\ref{eq:this-splits})
must split.
Restricting  (\ref{eq:this-splits})
we see that
(\ref{eq:non-spltiting-O})
splits, and therefore,
(\ref{eq:non-spliting})
splits as well.
This completes the proof.
\end{proof}
Let $\BGG_k$ be the full subcategory of category $\BGG$
of $\affg$ consisting of modules of level $k$,
which can be regarded as a full subcategory of $\Vg{k}\on{-Mod}$.
Let $H_-^{0}(?): \BGG_k\ra \W^k(\fing)\Mod$
be the  quantized Drinfeld-Sokolov ``$-$''-reduction functor
\cite{FKW92}.

Recall the following result.
\begin{Th}[\cite{Ara07}]
\label{Th:character-of-W-modules}
Let $k$ be any complex number.
\begin{enumerate}
 \item The functor $H_-^{0}(?): \BGG_k\ra \W^k(\fing)\Mod$
is exact.
\item For $\lam\in \dual{\affh}_k$,
$H^0_-(M(\lam))\cong \VermaW{\gamma_{\bar \lam}}$.
\item For $\lam\in \dual{\affh}_k$,
$H^0_-(L(\lam))\cong \begin{cases}
\IrrW{\gamma_{\bar \lam}}
		      &\text{if $\bar \lam$ is anti-dominant},\\
0&\text{otherwise.}
		     \end{cases}
$

\end{enumerate}
 
\end{Th}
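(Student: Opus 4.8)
The plan is to deduce all three assertions from the vanishing statement in (i), which I would prove first. Writing $(C_-^{\bullet}(M),d_-)$ for the complex computing $H_-^{\bullet}(M)$ — the ``$-$''-analogue of $(C^{\ch}(M),Q_{(0)})$, with $C_-^{\bullet}(M)=M\* \D^{\ch}\* \Lamsemi{\bullet}$ — I would decompose the differential with respect to the grading coming from $\ad h$ as $d_-=d_-^{st}+d_-^{\chi}$ and study the associated spectral sequence. The key input is that the $E_1$-page, i.e.\ the cohomology of $d_-^{st}$, is concentrated in a single homological degree for \emph{every} $M\in\BGG_k$; this is a semi-infinite cohomology vanishing theorem, and it holds precisely because objects of $\BGG_k$ are locally finite over $\affn_+$. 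On that $E_1$-page the differential induced by $d_-^{\chi}$ is, up to a Whittaker twist, a finite-type Chevalley--Eilenberg differential whose cohomology is again concentrated in degree $0$, so $H_-^{i}(M)=0$ for $i\neq 0$. Since cohomology commutes with filtered colimits, it suffices to run this argument for finitely generated $M$. This establishes (i), hence the exactness of $H_-^0\colon\BGG_k\to\W^k(\fing)\Mod$.

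Next I would carry out the same computation explicitly for a Verma module $M(\lam)$, now keeping track of the $L_0$- and $\fing$-gradings. This identifies $H_-^0(M(\lam))$ as a cyclic $\W^k(\fing)$-module which is one-dimensional in its lowest $L_0$-degree, on which $\Zhu(\W^k(\fing))=\mc{Z}(\fing)$ acts through the central character $\gamma_{\bar \lam}$, and whose graded character coincides with that of $\VermaW{\gamma_{\bar \lam}}$; the universal property of $\VermaW{\gamma_{\bar \lam}}$ then forces $H_-^0(M(\lam))\cong\VermaW{\gamma_{\bar \lam}}$, which is (ii).

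For (iii), apply the now exact functor $H_-^0$ to $0\to N\to M(\lam)\to L(\lam)\to 0$ with $N$ the maximal submodule: by (i) and (ii) the module $H_-^0(L(\lam))$ is a quotient of $\VermaW{\gamma_{\bar \lam}}$, so it is either $0$ or has $\IrrW{\gamma_{\bar \lam}}$ as unique simple quotient. If $\bar \lam$ is \emph{not} anti-dominant, then $\bra\bar \lam+\rho,\alpha\che\ket\in\N$ for some $\alpha\in\Delta_+$, so $L(\lam)$ carries a Kac--Kazhdan singular vector making it integrable along a finite simple direction; the finite Whittaker-type cohomology entering the $E_1$-page above then vanishes exactly as in Kostant's finite-dimensional theorem, and tracing this through the spectral sequence gives $H_-^0(L(\lam))=0$. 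If $\bar \lam$ is anti-dominant, then $\Ls{\bar \lam}=\Ms{\bar \lam}$ and $L(\lam)$ has no singular vector in a finite direction; computing $\ch H_-^0(L(\lam))$ by the Euler--Poincar\'e principle applied to a Verma-type resolution of $L(\lam)$ shows that $H_-^0(L(\lam))$ is nonzero (its lowest-degree component survives), and comparison with the classification $\on{Irr}(\W^k(\fing))$, using that every central character of $\mc{Z}(\fing)$ has a unique anti-dominant representative, shows it is simple, hence equals $\IrrW{\gamma_{\bar \lam}}$.

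The hardest part is the vanishing theorem $H_-^{\neq 0}(M)=0$ on all of $\BGG_k$: one must set up the filtration of $C_-^{\bullet}(M)$ so that the spectral sequence converges and has a computable $E_1$-page, and prove the underlying semi-infinite cohomology vanishing. The second most delicate point is the non-vanishing and simplicity of $H_-^0(L(\lam))$ in the anti-dominant case of (iii); everything else — the Verma computation in (ii) and the bookkeeping with gradings and central characters — is essentially routine once the vanishing is in hand.
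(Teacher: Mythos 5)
The paper itself does not prove this theorem; it is quoted from \cite{Ara07}, so there is no internal argument to compare against. With that caveat, your spectral sequence strategy for (i) --- decomposing the ``$-$''-differential into a standard semi-infinite Chevalley piece and a Whittaker character piece, and deducing exactness from a vanishing of the first page --- is indeed the route taken in \cite{Ara07}, and you correctly identify the $E_1$-vanishing and the convergence of the spectral sequence as the technical heart. Your deduction of (ii) from (i), via the surjection from $\VermaW{\gamma_{\bar\lam}}$ and an Euler--Poincar\'e character comparison, is essentially right.

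The genuine gap is in (iii). In the anti-dominant case you must show that $H^0_-(L(\lam))$ is \emph{simple}, not merely nonzero, and the appeal to ``comparison with the classification $\on{Irr}(\W^k(\fing))$, using that every central character has a unique anti-dominant representative'' does not deliver this: any nonzero quotient of $\VermaW{\gamma_{\bar\lam}}$, simple or not, is cyclic on a lowest-weight vector affording the central character $\gamma_{\bar\lam}$, so the classification of simples places no constraint. What is actually needed is a structural input, for example an affine analogue of the Kostant/Skryabin-type statement that the Whittaker-reduction functor carries irreducibles to irreducibles or zero, or a proof that $H^0_-$ intertwines the contragredient dualities on $\BGG_k$ and on $\W^k(\fing)$-modules so that $H^0_-(L(\lam))$ is the image of the canonical map from $\VermaW{\gamma_{\bar\lam}}$ to its dual. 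In the non-anti-dominant case, ``integrability along a single finite simple direction'' is the right heuristic, but $L(\lam)$ is not finite-dimensional over the corresponding $\mathfrak{sl}_2$, and it is not automatic that a Kostant-type finite vanishing propagates through the semi-infinite spectral sequence; this depends delicately on the choice of filtration. Finally, the Euler--Poincar\'e argument for nonvanishing presupposes a (two-sided) BGG-type resolution of $L(\lam)$ inside $\BGG_k$, whose existence is itself a nontrivial theorem. These three points --- simplicity-or-zero, the non-anti-dominant vanishing, and the resolutions --- are precisely what makes the proof of (iii) in \cite{Ara07} substantial, and none of them is supplied here.
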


Let $[M(\lam): L(\mu)]$ (resp.\ $[\VermaW{\gamma}: \IrrW{\gamma'}]$)
be
the
multiplicity of 
$L(\mu)$ (resp.\ $\IrrW{\gamma'}$) in the local composition factor
of $M(\lam)$
(resp.\ in the local composition factor of  $\VermaW{\gamma}$).

 \begin{Co}\label{Co:multiplicity-formula}
Let $\lam,\mu \in \dual{\affh}_k$ and
suppose that
$\bar \mu$ is anti-dominant.
Then 
\begin{align*}
[\VermaW{\gamma_{\bar \lam}}: \IrrW{\gamma_{\bar \mu}}]=
[M(\lam): L(\mu)].
\end{align*} \end{Co}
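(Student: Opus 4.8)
The plan is to read the identity off Theorem \ref{Th:character-of-W-modules} by applying the exact functor $H^0_-$ to a composition series of $M(\lam)$. First I would fix a finite lower set of weights large enough to contain every composition factor $L(\nu)$ of $M(\lam)$ with $\gamma_{\bar\nu}=\gamma_{\bar\mu}$ and pass to the corresponding truncation of $\BGG_k$; equivalently, one argues directly with local composition multiplicities, which are additive on short exact sequences. In the truncated category $M(\lam)$ has a genuine finite filtration $0=M_0\subset M_1\subset\dots\subset M_r=M(\lam)$ with $M_i/M_{i-1}\cong L(\nu_i)$, each $L(\nu)$ appearing exactly $[M(\lam):L(\nu)]$ times among the $\nu_i$.

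Applying $H^0_-$, which is exact by Theorem \ref{Th:character-of-W-modules}(i), and using part (ii), one gets a filtration
\begin{equation*}
0=H^0_-(M_0)\subseteq H^0_-(M_1)\subseteq\dots\subseteq H^0_-(M_r)=\VermaW{\gamma_{\bar\lam}}
\end{equation*}
whose subquotients are the modules $H^0_-(L(\nu_i))$, and by part (iii) each such subquotient equals $\IrrW{\gamma_{\bar\nu_i}}$ when $\bar\nu_i$ is anti-dominant and vanishes otherwise. Discarding the zero subquotients yields a composition series of $\VermaW{\gamma_{\bar\lam}}$, hence
\begin{equation*}
[\VermaW{\gamma_{\bar\lam}}:\IrrW{\gamma_{\bar\mu}}]=\sum_{\nu}[M(\lam):L(\nu)],
\end{equation*}
the sum taken over the $\nu$ such that $L(\nu)$ is a composition factor of $M(\lam)$, $\bar\nu$ is anti-dominant, and $\gamma_{\bar\nu}=\gamma_{\bar\mu}$ (that is, $\bar\nu+\rho\in W(\bar\mu+\rho)$, by Harish-Chandra's theorem).

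It then remains to show that, when $\bar\mu$ is anti-dominant, this sum has the single term $\nu=\mu$: among the composition factors of $M(\lam)$, at most one has anti-dominant restriction with a given central character. I would argue this by comparing the two linkages involved. If $L(\nu)$ and $L(\nu')$ both occur in $M(\lam)$, the linkage principle gives $\nu'=w\cdot\nu$ for some $w$ in the integral Weyl group $\affW(\nu)=\affW(\nu')$; hence $\wh{\Delta}(\nu)=\wh{\Delta}(\nu')$, and in particular the finite integral subsystems $\wh{\Delta}(\nu)\cap\Delta$ and $\wh{\Delta}(\nu')\cap\Delta$ agree. Anti-dominance of $\bar\nu$ and of $\bar\nu'$ forces each of them to be the antidominant representative for this common finite integral subsystem, while a root pairing non-integrally with $\bar\nu+\rho$ keeps doing so along the $\affW(\nu)$-orbit; combining this with the fact that the dot-action of $\affW(\nu)$ restricted to $\dual{\finh}$ factors through a group of affine transformations whose linear parts are reflections, one sees that if moreover $\gamma_{\bar\nu}=\gamma_{\bar\nu'}$ then the finite Weyl element carrying $\bar\nu+\rho$ to $\bar\nu'+\rho$ must fix it. Therefore $\bar\nu=\bar\nu'$, and since both weights have level $k$ this gives $\nu=\nu'$; specializing $\nu'=\mu$ completes the argument.

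I expect the real work to be concentrated in this last uniqueness step. When $\lam$ is integral it is immediate, since the antidominant weight in a finite $W$-orbit is then unique; but the non-integral case, and especially the case where $\lam$ is singular for its integral affine Weyl group, requires some care in matching the affine linkage relating $\nu$ to $\lam$ against the finite linkage recorded by the central character. Everything else---exactness of $H^0_-$, the images of Verma and simple modules, and the handling of local composition multiplicities---is a direct consequence of Theorem \ref{Th:character-of-W-modules} and standard truncation arguments in category $\BGG$.
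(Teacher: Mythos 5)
Your proposal follows essentially the same route as the paper: apply the exactness of $H^0_-$ and Theorem~\ref{Th:character-of-W-modules}~(ii),~(iii) to the composition structure of $M(\lam)$, and then reduce to the observation that among composition factors of $M(\lam)$ there is at most one $L(\nu)$ with $\bar\nu$ anti-dominant of a prescribed central character. The only real differences are stylistic: the paper phrases the computation in terms of formal characters, which avoids the infinite-length issue you handle via truncations and local multiplicities, and the paper simply states the final uniqueness observation without proof, whereas you sketch (somewhat loosely, but in the right direction, as you yourself acknowledge) an argument comparing the affine integral linkage with the finite Harish-Chandra linkage.
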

 \begin{proof}
Since $\ch M(\lam)=\sum_{\mu}[M(\lam): L(\mu)]
\ch L(\mu)$
we have
\begin{align*}
 \ch \VermaW{\gamma_{\bar \lam}}=\sum_{\mu\in \affW(\lam)\circ \lam\atop \bar \mu
\text{ is anti-dominant}}[M(\lam):L(\mu))]\ch \IrrW{\gamma_{\bar \mu}}.
\end{align*}
It remains to
observe that
if  $\mu,\mu'\in \affW(\lam)\circ \lam$,
$\gamma_{\bar \mu}=\gamma_{\bar \mu'}$,
and $\bar \mu$ and $\bar \mu'$ are both anti-dominant
then $\mu=\mu'$.
 \end{proof}

\begin{Th}\label{Th:rationality}
 Let $k$ be a non-degenerate admissible number
for $\affg$.
The simple vertex operator algebra
$\W_k(\fing)$  is
rational.
\end{Th}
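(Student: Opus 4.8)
The plan is to deduce rationality from the three facts already established for $\W_k(\fing)$: it is $C_2$-cofinite (Theorem~\ref{Th:simple=image-of-simple}), its simple modules are the finitely many minimal series representations $\IrrW{\gamma}$, $\gamma\in Pr_{\W}^k$ (Theorem~\ref{Th:classification-of-simlple-modules-of-minimal-W}), and $\Zhu(\W_k(\fing))$ is semisimple (Theorem~\ref{Th:semi-simplicity-of-Zhu}). First I would recall the standard reductions for $C_2$-cofinite vertex operator algebras: every finitely generated $\W_k(\fing)$-module is positively graded (\cite{AbeBuhDon04}) and of finite length, and an arbitrary module is a directed union of finitely generated submodules; thus it suffices to prove that every finite-length object of $\W_k(\fing)\on{-gMod}$ is semisimple, equivalently that $\Ext^1_{\W_k(\fing)\on{-gMod}}(\IrrW{\gamma},\IrrW{\gamma'})=0$ for all $\gamma,\gamma'\in Pr_{\W}^k$.

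The crucial step is to show that $M_{\W_k(\fing)}(\C_{\gamma})\cong\IrrW{\gamma}$ for each $\gamma\in Pr_{\W}^k$ (notation of \eqref{eq:def-of-Verma}); in other words, a minimal series module is generated by its lowest graded component and is universal with this property. Write $\gamma=\gamma_{\bar\lam}$ with $\lam\in Pr_{\mathit{non-deg}}^k$, so that $\bar\lam$ is anti-dominant and $\Ls{\bar\lam}=\Ms{\bar\lam}$. Since $\U(\W_k(\fing))$ is a quotient of $\U(\W^k(\fing))$, the module $M_{\W_k(\fing)}(\C_{\gamma})$ is the largest $\W_k(\fing)$-module quotient of $\VermaW{\gamma}$. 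By Corollary~\ref{Co:multiplicity-formula} and the linkage principle for $\affg$, the composition factors of $\VermaW{\gamma}$ are among the $\IrrW{\gamma_{\bar\mu}}$ with $\mu$ in the linkage class of $\lam$ and $\bar\mu$ anti-dominant, and $[\VermaW{\gamma}:\IrrW{\gamma}]=[M(\lam):L(\lam)]=1$. By Theorem~\ref{Th:classification-of-simlple-modules-of-minimal-W} such a factor is a $\W_k(\fing)$-module only if $\mu\in Pr^k$, i.e.\ only if $\mu$ is admissible; since a linkage class of $\affg$-weights contains exactly one admissible ($=$ regular dominant) weight, this forces $\mu=\lam$. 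Hence $M_{\W_k(\fing)}(\C_{\gamma})$ has length one, which proves the claim.

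Granting this, let $0\to A\to E\to B\to 0$ be an exact sequence in $\W_k(\fing)\on{-gMod}$ with $A$ and $B$ simple. The contragredient of a simple object of $\W_k(\fing)\on{-gMod}$ is again a simple object — hence a minimal series module by Theorem~\ref{Th:classification-of-simlple-modules-of-minimal-W} — with the same lowest conformal weight; passing to contragredient modules if necessary, we may assume that the lowest conformal weight $h$ of $B$ is no larger than that of $A$. Then $E_h$ is the lowest graded component of $E$, so $O(\W_k(\fing))$ acts trivially on it and $E_h$ is naturally a $\Zhu(\W_k(\fing))$-module; by Theorem~\ref{Th:semi-simplicity-of-Zhu} it is semisimple, say $E_h\cong\bigoplus_i\C_{\gamma_i}$. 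The submodule $S$ of $E$ generated by $E_h$ is therefore a quotient of $M_{\W_k(\fing)}(E_h)=\bigoplus_i M_{\W_k(\fing)}(\C_{\gamma_i})=\bigoplus_i\IrrW{\gamma_i}$ by the crucial step, hence semisimple; moreover $S$ surjects onto $B$, since $E_h\to B_h$ is onto and $B$ is generated by $B_h$. Consequently $S+A=E$, so $E=S$ or $E=S\oplus A$; either way $E$ is semisimple. Thus $\Ext^1$ vanishes between all pairs of simple modules, $\W_k(\fing)\on{-gMod}$ is semisimple, and $\W_k(\fing)$ is rational.

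The hard part of this plan is the crucial step, which relies on the fine structure of the category of $\W^k(\fing)$-modules developed in \cite{Ara07} (the linkage principle and the multiplicity formula) together with the Kac--Wakimoto observation \cite{KacWak89} that each linkage class of $\affg$-weights contains a unique admissible weight. Should that step prove awkward to carry out directly, an alternative would be to transport the extension $E$ to the affine side: using Proposition~\ref{Pro:induction} realize $A$ and $B$ as quantized Drinfeld--Sokolov reductions of admissible $\affg$-modules, lift $E$ through the exact reduction functor by a variant of the argument in the proof of Theorem~\ref{Th:semi-simplicity-of-Zhu} (Lemma~\ref{Lem:extension} and Proposition~\ref{Pro:induction}) to an extension of admissible $\affg$-modules, and split the latter by the complete reducibility theorem of Gorelik and Kac \cite{GorKac0905}.
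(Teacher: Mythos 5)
Your proof is correct and uses the same essential ingredients as the paper's (Corollary~\ref{Co:multiplicity-formula}, the uniqueness of the regular dominant weight in a linkage class, the semisimplicity of $\Zhu(\W_k(\fing))$ from Theorem~\ref{Th:semi-simplicity-of-Zhu}, and contragredient duality), but the organization is genuinely different. The paper argues directly against a putative non-split extension $0\to\IrrW{\gamma'}\to N\to\IrrW{\gamma}\to 0$ by trichotomy on the lowest conformal weights $\Delta_\gamma$ and $\Delta_{\gamma'}$: when $\Delta_\gamma<\Delta_{\gamma'}$ a map $\VermaW{\gamma}\to N$ would force $[\VermaW{\gamma}:\IrrW{\gamma'}]\ne0$, hence $[M(\lam):L(\lam')]\ne0$ and $\lam=\lam'$ by dominance; the opposite inequality is handled by duality; and equality is handled by Zhu semisimplicity. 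You instead isolate the assertion $M_{\W_k(\fing)}(\C_\gamma)\cong\IrrW{\gamma}$ as a free-standing lemma (proved by the same multiplicity-plus-dominance argument), and then give a single uniform splitting argument: normalize by contragredients so that $B$ has minimal lowest weight, observe the lowest graded piece $E_h$ is a semisimple $\Zhu(\W_k(\fing))$-module, and conclude that the submodule it generates is semisimple and already covers $B$. This buys you a cleaner presentation that avoids the three-way case split and makes explicit a useful structural fact (the simplicity of the induced modules) that is only implicit in the paper's argument; the paper's presentation, conversely, is more self-contained in that it never needs to name the induced module as an object of interest. One small caution: in your crucial step you say the composition factor $\IrrW{\gamma_{\bar\mu}}$ being a $\W_k(\fing)$-module forces ``$\mu\in Pr^k$''; strictly, Theorem~\ref{Th:classification-of-simlple-modules-of-minimal-W} gives $\gamma_{\bar\mu}=\gamma_{\bar\nu}$ for some $\nu\in Pr^k_{\mathit{non\textrm{-}deg}}$, and one then uses anti-dominance and regularity of the non-integral weight $\bar\nu$ to get $\bar\mu=\bar\nu$, and the level-$k$ condition to get $\mu=\nu\in Pr^k$. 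That patch is routine and matches the bookkeeping in the paper's own proof of Theorem~\ref{Th:classification-of-simlple-modules-of-minimal-W}, so the argument stands.
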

\begin{proof}
By Theorem
 \ref{Th:simple=image-of-simple},
it is sufficient  to show that
\begin{align*}
\Ext^1_{\W_k(\fing)\on{-Mod}}(\IrrW{\gamma},\IrrW{\gamma'})=0\quad 
\text{for }
\IrrW{\gamma}, \IrrW{\gamma'}\in
\on{Irr}(\W_k(\fing)).
\end{align*}By Theorem \ref{Th:classification-of-simlple-modules-of-minimal-W}
we can
write
$\gamma=\gamma_{\bar \lam}$,
$\gamma=\gamma_{\bar \lam'}$
with
$\lam,\lam'\in Pr^k_{non-deg}$.
Let
\begin{align}
 0\ra \IrrW{\gamma'}\ra N\ra \IrrW{\gamma}\ra 0
\label{eq:exact-sequence-splits?}
\end{align}
be an exact sequence of $\W_k(\fing)$-modules.

Let $\Delta_{\gamma}$ be  the $L_0$-eigenvalue of the
lowest weight vector $v_{\gamma}$ of $\IrrW{\gamma}$,
which is a rational number.
Suppose that
$\Delta_{\gamma}<\Delta_{\gamma'}$,
and
choose a vector $v\in N_{\Delta_{\gamma}}$
such that the image of $v$ in $\IrrW{\gamma}$
is  $v_{\gamma}$.
Then there is a $\W^k(\fing)$-module
homomorphism
$\VermaW{\gamma}\ra N$ that sends the highest weight vector
of $\VermaW{\gamma}$ to $v$.
If (\ref{eq:exact-sequence-splits?})
is non-splitting,
 $N$ 
 must coincide with the image of $\VermaW{\gamma}$.
In particular,
$[\VermaW{\gamma}:\IrrW{\gamma'}]\ne 0$.
By 
Corollary \ref{Co:multiplicity-formula},
this is equivalent to
$[M(\lam): L(\lam')]\ne 0$.
This forces that
$\lam=\lam'$
since both $\lam$ and $\lam'$ are  dominant weighs of $\affg$.
This contradicts the assumption that $\Delta_{\gamma}<\Delta_{\gamma'}$.

By applying the duality functor $D(?)$ to
 (\ref{eq:exact-sequence-splits?}),
we see that the same argument applies 
to show that
 $\Ext_{\W_k(\fing)\Mod}^1(\IrrW{\gamma},\IrrW{\gamma'})=0$
in
the case $\Delta_{\gamma}>\Delta_{\gamma'}$.

Finally, suppose that
$\Delta_{\gamma}=\Delta_{\gamma'}=:\Delta$.
Then we have the exact sequence
\begin{align*}
 0\ra \IrrW{\gamma'}_{\Delta}\ra N_{\Delta}\ra \IrrW{\gamma}_{\Delta}\ra 0.
\end{align*}
The semisimplicity  of $\Zhu(\W_k(\fing))$ (Theorem \ref{Th:semi-simplicity-of-Zhu})
implies that
the above sequence splits.
Therefore (\ref{eq:exact-sequence-splits?})
splits as well.
This completes the proof.
\end{proof}

Main Theorem follows immediately from Theorems
\ref{Th:classification-of-simlple-modules-of-minimal-W},
\ref{Th:semi-simplicity-of-Zhu}
and \ref{Th:rationality}.
\qed

\newcommand{\etalchar}[1]{$^{#1}$}

\end{document}